\documentclass[final,3p,times,authoryear]{elsarticle}
\usepackage{amsthm,amsmath,amssymb,amsfonts,bbm,graphics,a4wide,rotating,graphicx}
\usepackage{color}
\usepackage{natbib}

\linespread{1.2}
\usepackage{slashbox}
\usepackage{multirow}
\usepackage[francais,english]{babel}

\usepackage{tabularx}
\def\1{\mbox{1\hspace{-0.15em}I}}

\newtheorem{Prop}{Proposition}
\newtheorem{Lemma}{Lemma}

\newtheorem{Rk}{Remark}
\renewenvironment{proof}{\noindent{\bf Proof.}}{\hfill
  $\blacksquare$\par\noindent}

\numberwithin{equation}{section}

\begin{document}

\begin{frontmatter}

\title{Variable selection through CART\tnoteref{t1}}
\tnotetext[t1]{The authors acknowledge the support of the  French Agence Nationale
de la Recherche  (ANR), under grant ATLAS (JCJC06\_137446) ''From Applications
to Theory in Learning and Adaptive Statistics''.}


\author[]{Marie SAUVE}

\author[N]{Christine TULEAU-MALOT\corref{cor1}}
\ead{malot@unice.fr}

\cortext[cor1]{Corresponding author. Tel.: (+33) 04 92 07 60 20; Fax: (+33) 04 93 51 79 74.}

\address[N]{Laboratoire Jean-Alexandre Dieudonn\'e, CNRS UMR 6621,\\ Universit\'e de Nice Sophia-Antipolis,\\ Parc Valrose, 06108 Nice Cedex 2, France.\vspace*{0.5cm}}

\begin{abstract}
This paper deals with variable selection in the regression and binary classification frameworks.
It proposes an automatic and exhaustive procedure which relies on the use of the CART algorithm
and on model selection via penalization.
This work, of theoretical nature, aims at determining adequate penalties, i.e. penalties which
allow to get oracle type inequalities justifying the performance of the proposed procedure. Since the exhaustive procedure can not be executed when the number of variables is too big, a more practical procedure is also proposed and still theoretically validated.
A simulation study completes the theoretical results.

\end{abstract}

\begin{keyword}
binary classification \sep CART \sep model selection \sep penalization \sep regression \sep variable selection



\MSC 62G05 \sep 62G07 \sep 62G20

\end{keyword}

\end{frontmatter}

\section{Introduction} \label{Intro}

This paper deals with variable selection in nonlinear regression and classification using CART estimation and model selection approach. Our aim is to propose a theoretical variable selection procedure for nonlinear models and to consider some practical approaches.\\

Variable selection is a very important topic since we have to consider problems where the number of variables is very large while the number of variables that are really explanatory can be much smaller. This is the reason why we are interesting in their importance. The variable importance is a notion which allows the quantification of the ability of a variable to explain the studied phenomena. The formula, for the computation, depends on the considered model. In the literature, there are many variable selection procedures which combine primarily a concept of variable importance and model estimation. If we refer to the work of Kohavi and John (\cite{Kohavi}) or Guyon and Elisseff (\cite{Guyon}), these methods are ``filter'', ``wrapper'' or ``embedded'' methods. To summarize, (i) filter method is a pre-processing step which does not depend on the learning algorithm, (ii) in the wrapper method the learning model is used to induce the final model but also to search the optimal feature subset, and (iii) for embedded methods the features selection and the learning part can not be separated.\\
Let us mention some of those methods, in the regression and/or the classification framework.\\

\subsection{General framework and State of the Art}

Let us consider a linear regression model $
Y=\sum_{j=1}^p \beta_jX^j+\varepsilon = X\beta +\varepsilon
$
where $\varepsilon$ is an unobservable noise, $Y$ the response and ${X=(X^1, \ldots, X^p)}$ a vector of $p$ explanatory variables.
Let $\{(X_i,Y_i)_{1 \leq i \leq n}\}$ be a sample, i.e. $n$ independent copies of the pair of random variables $(X,Y)$. \\

The well-known Ordinary Least Square (OLS) estimator provides an useful way to estimate the vector $\beta$ but it suffers from a main drawback: it is not adapted to variable selection since, when $p$ is large, many components of $\beta$ are non zero.
However, if OLS is not a convenient method to perform variable selection, the least squares criterion often appears in model selection.
For example, Ridge Regression and Lasso (wrapper methods) are penalized versions of OLS.
Ridge Regression (see Hastie \cite{Hastie}) involves a $L_2$ penalization
which produces the shrinkage of $\beta$ but does not force any coefficients
of $\beta$ to be zero. So, Ridge Regression is better than OLS, but it is not a
variable selection method unlike Lasso.
Lasso (see Tibshirani \cite{Tibshirani}) uses the least squares criterion
penalized by a $L_1$ penalty term. By this way, Lasso shrinks some
coefficients and puts the others to zero.
Thus, this last method performs variable selection but computationally,
its implementation needs quadratic programming techniques.\\

Penalization is not the only way to perform variable or model selection.
For example, we can cite the Subset Selection (see Hastie \cite{Hastie})
which provides, for each ${k \in \{1, \ldots, p\}}$,
the best subset of size $k$,
i.e. the subset of size $k$ which gives smallest residual sum of squares.
Then, by cross validation, the final subset is selected.
This wrapper method is exhaustive: it is consequently difficult to use it in practice
when $p$ is large. Often, Forward or Backward Stepwise Selection
(see Hastie \cite{Hastie}) are preferred since they are computationally
efficient methods. But, they may eliminate useful predictors.
Since they are not exhaustive methods they may not reach the global optimal
model. In the regression framework, there exists an efficient algorithm
developped by Furnival and Wilson (\cite{furnival}) which achieves the optimal
model, for a small number of explanatory variables,
without exploring all the models.\\

More recently, the most promising method seems to be the method called Least Angle Regression (LARS) due to Efron {\it{et al.}} (\cite{Efron}).
Let ${\mu=x\beta}$ where ${x=(X_1^T, \ldots, X_n^T)}$. LARS builds an estimate
of $\mu$ by successive steps.
It proceeds by adding, at each step, one covariate to the model, as Forward Selection.
At the begining, ${\mu=\mu_0=0}$. At the first step,
LARS finds the predictor $X^{j_1}$ most correlated with the response $Y$
and increases $\mu_0$ in the direction of $X^{j_1}$ until another predictor
$X^{j_2}$ has a larger correlation with the current residuals.
Then, $\mu_0$ is replaced by $\mu_1$. This step corresponds to the first step
of Forward Selection. But, unlike Forward Selection, LARS is based on an
equiangulary strategy. For example, at the second step,
LARS proceeds equiangulary between $X^{j_1}$ and $X^{j_2}$ until another
explanatory variable enters.
This method is computationally efficient and gives good results in practice.
However, a complete theoretical elucidation needs further investigation.\\
For linear regression, some works are also based on variable importance assessment; the aim is to produce a relative importance of regressor variables. Gr\"{o}mping (\cite{Gromping}) proposes a study of some estimators of relative importance based on variance decomposition.\\

In the context of nonlinear models, Sobol (\cite{Sobol}) proposes an extension of the notion of relative importance via the Sobol sensitivity indices, indices which take part to the sensitivity analysis (cf. Saltelli {\it{et al.}} \cite{Saltelli}). The idea of variable importance is not so recent since it can be found in the book about Classification And Regression Trees of Breiman {\it{et al.}} (\cite{Breiman}) who introduce the variable importance as the decrease of node impurity measures, or in the studies about Random Forests by Breiman {\it{et al.}} (\cite{Breiman2}, \cite{Cutler}) where the variable importance is more a permutation importance index. Thanks to this notion, the variables can be ordered and we can easily deduce some filter or wrapper methods to select some of them. But, there exists also some embedded purposes based on those notions or some others. Thus, D\`{i}az-Uriarte and Alvarez de Andr\'{e}s (\cite{Diaz}) propose the following recursive strategy. They compute the Random Forests variable importance and they delete the 20\% of variables having the smallest importance: with the remaining variables, they construct a new forest and repeat the procedure. At the end, they compare all the forest models and conserve the one having the smallest Out Of Bag error rate. Poggi and Tuleau (\cite{Poggi}) develop a method based on CART and on a stepwise ascending strategy combined with an elimination step while Genuer {\it{et al.}} (\cite{Genuer}) propose a procedure based on Random Forest combined with elimination, ranking and variable selection steps. Guyon {\it{et al.}} (\cite{Guyon2}) propose a method of selection, called SVM-RFE, utilizing Support Vector Machine methods based on Recursive Feature Elimination. Recently, this approach has been modified by Ben Ishak {\it{et al.}} (\cite{Ben}) using a stepwise strategy.\\

\subsection{Main goals}
In this paper, the purpose is to propose, for regression and classification frameworks, a variable selection procedure, based on CART, which is adaptative and theoretically validated. This second point is very important and establishes a real difference with existing works since actually most of the practical method for both frameworks are not validated because of the use of Random Forest or arbitrary thresholds on the variable importance. The method consists in applying the CART algorithm to each possible subset of variables and then considering model selection via penalization
(cf. Birg\'e and Massart \cite{Birge}),
to select the set which minimizes a penalized criterion.
In the regression and classification frameworks, we determine via oracle bounds, the expressions of this penalized criterion.\\

More precisely, let ${\mathcal{L}=\{(X_1,Y_1), \ldots , (X_n,Y_n)\}}$ be a sample, i.e. independent copies of a pair $(X,Y)$, where $X$ takes its values
in $\mathcal{X}$, for example $\mathbb{R}^p$, with distribution $\mu$
and $Y$ belongs to $\mathcal{Y}$ ($\mathcal{Y}=\mathbb{R}$ in the regression framework and $\mathcal{Y}=\{0;1\}$ in the classification one).
Let $s$ be the regression function or the Bayes classifier according to the considered framework.
We write ${X=(X^1, \ldots , X^p)}$ where the $p$ variables $X^j$,
with ${j \in \{1,2, \ldots ,p\}}$, are the explanatory variables.
We denote by $\Lambda$ the set of the $p$ explanatory variables,
i.e. ${ \Lambda=\{X^1,X^2, \ldots , X^p\} }$.
The explained variable $Y$ is called the response.
When we deal with variable selection, there exists two different objectives (cf. Genuer {\it{et al.}} \cite{Genuer}): the first one consists in determining all the important variables highly related to the response $Y$ whereas the second one is to find the smallest subset of variables to provide a good prediction of $Y$. Our purpose here is to find a subset $M$ of $\Lambda$, as small as possible, such that the variables in $M$ enable to predict the response $Y$.\\

To achieve this objective, we split the sample $\mathcal{L}$ in three subsamples $\mathcal{L}_1$, $\mathcal{L}_2$ and $\mathcal{L}_3$
of size $n_1$, $n_2$ and $n_3$ respectively.
In the following, we consider two cases: the first one is ``$\mathcal{L}_1$ independent of $\mathcal{L}_2$''
and the second corresponds to ``$\mathcal{L}_1 = \mathcal{L}_2$''.
Then we apply the CART algorithm to all the subsets of $\Lambda$ (an overview of CART is given later and for more details, the reader can refer to Breiman {\it{et al.}} \cite{Breiman}).
More precisely, for any $M \in \mathcal{P}(\Lambda)$,
we build the maximal tree by the CART growing procedure using the subsample $\mathcal{L}_1$.
This tree, denoted $T_{max}^{(M)}$, is constructed thanks to the class of admissible splits $\mathcal{S}p_M$ which involves only the variables of $M$.
For any $M \in \mathcal{P}(\Lambda)$ and any $T \preceq T_{max}^{(M)}$,
we consider the space $S_{M,T}$ of $\mathbb{L}_{\mathcal{Y}}^2(\mathbb{R}^p,\mu)$ composed by all the piecewise constant functions
with values in $\mathcal{Y}$ and defined on the partition $\tilde{T}$
associated with the leaves of $T$.
At this stage, we have the collection of models
$$ \{ S_{M,T}, \ \ M \in \mathcal{P}(\Lambda) {\text{ and }} T \preceq T_{max}^M \} $$
which depends only on $\mathcal{L}_1$.
Then, for any $(M,T)$, we denote $\hat{s}_{M,T}$ the $\mathcal{L}_2$ empirical risk minimizer on $S_{M,T}$.
$$ \hat{s}_{M,T}= \underset{u \in S_{M,T}}{argmin} \ \gamma_{n_2}(u)
\text{ with } \gamma_{n_2}(u) =
\frac{1}{n_2}\sum_{(X_i,Y_i) \in \mathcal{L}_2} \left( Y_i -u(X_i) \right)^2.$$
Finally, we select $(\widehat{M,T})$ by minimizing the penalized contrast:
$$ \widehat{(M,T)} = \underset{(M,T)}{argmin} \left\{
\gamma_{n_2}(\hat{s}_{M,T}) + pen(M,T) \right\} $$
and we denote the corresponding estimator $\tilde{s} = \hat{s}_{\widehat{M,T}}$.\\

Our purpose is to determine the penalty function $pen$ such that the model $\widehat{(M,T)}$ is close to the optimal one. This means that the model selection procedure should satisfy an oracle inequality i.e.:
$$ \mathbb{E} \left[  l(s, \tilde{s}) \ | \mathcal{L}_1 \right] \leq C
\underset{(M,T)}{\inf} \bigg\{ \mathbb{E} \left[  l(s,\hat{s}_{M,T}) \ |
\mathcal{L}_1 \right] \bigg\}, \quad {\text{ $C$ close to 1 }}$$
where $l$ denotes the loss function and $s$ the optimal predictor. The main results of this paper give adequate penalties defined up to two multiplicative constants $\alpha$ and $\beta$.
Thus we have a family of estimators $\tilde{s}(\alpha, \beta)$
among which the final estimator is chosen using the test sample $\mathcal{L}_3$. This third sub-sample is admittedly introduce for practice but we consider it also in the theoretical part since we obtain some results on it.\\

The described procedure is, of course, a theoretical one since,
when $p$ is too large, it may be impossible, in practice,
to take into account all the $2^p$ sets of variables.
A solution consists of determining, at first, few data-driven subsets of variables which are adapted to perform variable selection
and then applying our procedure to those subsets.
As this family of subsets, denoted $\mathcal{P}^*$, is constructed thanks to the data,
the theoretical penalty, determined when the procedure involves the $2^p$ sets, is still adapted for the procedure restricted to $\mathcal{P}^*$ since this subset is not deterministic.\\

The paper is organized as follows.
After this introduction, the Section \ref{Preliminaries} recalls the different steps of the CART algorithm and defines some notations.
The Sections \ref{Regression} and \ref{Classification} present the results obtained in the regression and classification frameworks. In both sections, the results have the same spirit, however since the frameworks differ, the assumptions and the penalty functions are different. This is the reason why, for clarity, we divide our results. In the Section \ref{Simulation}, we apply our procedure to a simulated example and we compare the results of the procedure when, on the one hand, we consider all sets of variables and, on the other hand, we take into account only a subset determined thanks to the Variable Importance defined by Breiman {\it{et al.}} (\cite{Breiman}). Sections \ref{Appendix} and \ref{Proofs} collect lemmas and proofs.

\section{Preliminaries} \label{Preliminaries}

\subsection{Overview of CART and variable selection}

In the regression and classification frameworks and thanks to a
training set, CART splits recursively the observations space $\mathcal{X}$ and
defines a piecewise constant function on this partition which is
called a predictor or a classifier according to the case. CART proceeds
in three steps: the construction of a maximal tree, the construction of nested
models by pruning and a final model selection. In the following, we give a brief summary; for details, readers may refer to the seminal book of Breiman {\it{et al.}} (\cite{Breiman}) or to Gey's vulgarization articles (\cite{Nedelec2}, \cite{Gey}). \\

The first step consists of the construction of a nested
sequence of partitions of $\mathcal{X}$ using binary
splits. A useful representation of this
construction is a tree composed of nonterminal and terminal nodes. To each nonterminal node is associated a binary split which is just a question of the form $\mathbf{(X^j \leq c_j)}$ for numerical variables or $\mathbf{(X^j \in S_j)}$ for qualitative ones. Such split involves only one original explanatory variable and is
determined by maximizing a quality criterion derived from impurity function. For instance, in the regression framework the criterion for a node $\mathbf{t}$ is the decrease of $\mathbf{R(t)}$ where $\mathbf{R(t)=\frac{1}{n} \sum_{X_i \in t} (Y_i-\bar{Y}(t))^2}$ with $\mathbf{\bar{Y}(t)}$ the arithmetical mean of $\mathbf{Y}$ over $\mathbf{t}$. This is just the estimate of the error. In the classification framework, the criterion is the decrease in impurity which is often the Gini index $\mathbf{i(t)=\sum_{i \neq j} p(i|t)p(j|t)}$ with $\mathbf{p(i|t)}$ the posterior probability of the class $\mathbf{i}$ in $\mathbf{t}$. In this case, the criterion is less intuitive but the estimate of the misclassification rate has too many drawbacks to be used. The tree associated to the finest partition, that is to say the one with one observation or observations with the same response by element, is the maximal tree. This tree is too complex and too faithful with the training sample. This is the reason of the next step.\\
The principle of the pruning step is to extract, from the maximal tree a
sequence of nested subtrees which minimize a penalized criterion. This penalized criterion, proposed by Breiman {\it{et al.}} (\cite{Breiman}) realizes a tradeoff between the goodness of fit and the complexity
of the tree (or model) measured by the number of leaves.\\
At last, via a test sample or cross validation, a subtree is selected
among the preceding sequence.\\

CART is an algorithm which builds a binary decision tree.
A first idea is to perform variable selection by retaining the variables appearing in the tree.
This has many drawbacks since on the one hand, the number of selected variables may be too large, and on the other hand, some really important variables could be hidden by the selected ones.\\
A second approach is based on the Variable Importance (VI) introduced by Breiman {\it{et al.}} (\cite{Breiman}).
This criterion, calculated with respect to a given tree (typically coming from the procedure CART),
quantifies the contribution of each variable by awarding it a note between $0$ and $100$.
The variable selection consists of keeping the variables whose notes are greater than an arbitrary threshold.
But, there is, at present, no way to automatically determine the threshold and such a method does not allow to suppress highly dependent influent variables.\\
In this paper, we propose another approach which consists of applying CART to each subset of variables and choosing the set which minimizes an adequate penalized criterion.

\subsection{The context}

The paper deals with two frameworks: the regression and the binary classification.
In both cases, we denote
\begin{eqnarray}
\label{s}
s=\underset{u:\mathbb{R}^p \rightarrow \mathcal{Y}} {argmin} \
\mathbb{E}\left[ \gamma(u,(X,Y)) \right] {\text{ with }}
\gamma(u,(x,y))=(y-u(x))^2.
\end{eqnarray}

The quantity $s$ represents the best predictor according to the quadratic contrast $\gamma$.
Since the distribution $P$ is unknown, $s$ is unknown too.
Thus, in the regression and classification frameworks, we use
$(X_1,Y_1),...,(X_n,Y_n)$, independent copies of $(X,Y)$, to construct
an estimator of $s$. The quality of this one is measured by the loss
function $l$ defined by:
\begin{eqnarray}
\label{l}
l(s,u)=\mathbb{E}[\gamma(u,.)]-\mathbb{E}[\gamma(s,.)].
\end{eqnarray}

In the regression case,
the expression of $s$ defined in (\ref{s}) is
\begin{eqnarray*}
\forall x \in \mathbb{R}^p, \ \ s(x)=\mathbb{E}[Y \vert X=x],
\end{eqnarray*}
and the loss function $l$ given by (\ref{l}) is the
$\mathbb{L}^2(\mathbb{R}^p,\mu)$-norm, denoted $ \left\|.\right\|_\mu
$.\\
In this context, each $(X_i,Y_i)$ satisfies
$$ Y_i=s(X_i)+\varepsilon_i $$
where $(\varepsilon_1,...,\varepsilon_n)$ is a sample such that
$\mathbb{E}\left[ \varepsilon_i | X_i \right] = 0$.
In the following, we assume that the variables $\varepsilon_i$ have
exponential moments around $0$ conditionally to $X_i$. As explained in (\cite{Sauve}),
this assumption can be expressed by the existence of two constants
$\sigma \in \mathbb{R}_+^*$ and $\rho \in \mathbb{R}_+$ such that
\begin{eqnarray} \label{A}
\text{for any } \lambda \in \left( -1/ \rho, 1/ \rho \right) \text{, }
\log \mathbb{E} \left[ e^{\lambda \varepsilon_i} \big| X_i \right] \leq
\frac{\sigma^2 \lambda^2}{2\left( 1-\rho | \lambda | \right).}
\end{eqnarray}
$\sigma^2$ is necessarily greater than $\mathbb{E}(\varepsilon_i^2)$
and can be chosen as close to $\mathbb{E}(\varepsilon_i^2)$ as we want,
but at the price of a larger $\rho$.

\begin{Rk}
If $\rho = 0$ in (\ref{A}),
the random variables $\varepsilon_i$ are said to be sub-Gaussian
conditionally to $X_i$.\\
\end{Rk}

In the classification case, the Bayes classifier $s$, given by (\ref{s}),
is defined by:
\begin{eqnarray*}
\forall x \in \mathbb{R}^p, \ \ s(x)=\1_{\eta(x) \geq 1/2} {\text{ with }} \eta(x)=\mathbb{E}[Y | X=x].
\end{eqnarray*}
As $Y$ and the predictors $u$ take their values in $\{0;1\}$, we have ${\gamma(u,(x,y)) =  \1_{u(x) \neq y}}$ so we deduce that the loss function $l$ can be expressed as:
$$l(s,u) = \mathbb{P}(Y \neq u(X))-\mathbb{P}(Y \neq s(X))
= \mathbb{E}\left[|s(X)-u(X)||2\eta(X)-1|\right].$$

For both frameworks, we consider two situations:
\begin{itemize}
\item
$(M1)$: the training sample $\mathcal{L}$ is divided in three
independent parts $\mathcal{L}_1$, $\mathcal{L}_2$ and $\mathcal{L}_3$
of size $n_1$, $n_2$ and $n_3$ respectively.
The subsample $\mathcal{L}_1$ is used to construct the maximal tree, $\mathcal{L}_2$ to prune it and $\mathcal{L}_3$ to perform the final selection;

\item
$(M2)$: the training sample $\mathcal{L}$ is divided only in two
independent parts $\mathcal{L}_1$ and $\mathcal{L}_3$.
The first one is both for the construction of the maximal tree and its pruning
whereas the second one is for the final selection.
\end{itemize}

The $(M1)$ situation is theoretically easier since all the subsamples are
independent, thus each step of the CART algorithm is performed on independent data sets. With real data, it is often difficult to split the sample in three parts because of the small number of data. That is the reason why we also consider
the more realistic situation $(M2)$.

\section{Classification} \label{Classification}

This section deals with the binary classification framework.
In this context, we know that the best predictor is the Bayes
classifier $s$ defined by:
$$ \forall x \in \mathbb{R}^p, \ \ s(x)= \1_{\eta(x) \geq 1/2} $$

A problem appears when $\eta(x)$ is close to $1/2$, because
in this case, the choice between the label $0$ and $1$ is
difficult. If $\mathbb{P}(\eta(x)=1/2) \neq 0 $, then the accuracy of the Bayes
classifier is not really good and the comparison with $s$ is not
relevant. For this reason, we consider the margin condition introduced by
Tsybakov (\cite{Tsybakov}):
$$ \exists h>0, {\text{ such  that }} \forall x \in \mathbb{R}^p, \  \ \vert 2
\eta(x) - 1 \vert \geq h. $$
For details about this margin condition, we refer to Massart (\cite{Flour}). Otherwise in (\cite{Arlot}) some considerations about margin-adaptive model selection could be found more precisely in the case of nested models and with the use of the margin condition introduced by Mammen and Tsybakov (\cite{Mammen}).\\

The following subsection gives results on the
variable selection
for the methods $(M1)$ and $(M2)$ under margin condition. More precisely, we define convenient penalty functions
which lead to oracle bounds.
The last subsection deals with the final selection by test sample $\mathcal{L}_3$.

\subsection{Variable selection via $(M1)$ and $(M2)$}

\underline{$ \bullet \ (M1)$ case :}\\
Given the collection of models
$$ \left\{ S_{M,T}, \  M \in \mathcal{P}(\Lambda) \text{ and } T \preceq T_{max}^{(M)} \right\} $$
built on $\mathcal{L}_1$, we use the second subsample $\mathcal{L}_2$ to select a model $\widehat{(M,T)}$ which is close to the optimal one.
To do this, we minimize a penalized criterion
$$ crit(M,T)= \gamma_{n_2}\left( \hat{s}_{M,T} \right) + pen\left( M,T \right)$$
The following proposition gives a penalty function $pen$ for which
the risk of the penalized estimator $\tilde{s} = \hat{s}_{\widehat{M,T}}$
can be compared to the oracle accuracy.

\begin{Prop} \label{propc1}
Let consider a penalty function of the form:
$ \forall \ M \in \mathcal{P}(\Lambda) $ and $\forall \ T \preceq
T_{max}^{(M)}$
$$ pen(M,T)=\alpha \frac{\vert T \vert}{n_2h} +
\beta \frac{\vert M \vert}{n_2h}\left(1+ \log\left(\frac{p}{\vert M \vert}\right)\right).$$
If $\alpha > \alpha_0$ and $\beta > \beta_0$, then there exists
two positive constants $C_1 >1$ and $C_2$, which only depend on $\alpha$ and $\beta$,
such that:
$$ \mathbb{E} \bigg[ l \left( s,\tilde{s} \right) | \mathcal{L}_{1} \bigg]
\leq  C_{1} \underset{(M,T)}{\inf} \bigg\{ l \left( s,S_{M,T} \right)+ pen \left( M,T \right) \bigg\}
+ C_{2}\frac{1}{n_2h} $$
where  $l(s,S_{M,T})=\underset{u \in S_{M,T}}{\inf}l(s,u)$.
\end{Prop}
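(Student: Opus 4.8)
The plan is to follow Massart's model selection methodology in the margin framework, working throughout conditionally on $\mathcal{L}_1$ so that the collection $\{S_{M,T}\}$ is deterministic and $\mathcal{L}_2$ consists of $n_2$ independent observations. Write $P\gamma(u)=\mathbb{E}[\gamma(u,(X,Y))]$ and recall that in the binary setting $\gamma(u,(x,y))=\indic_{u(x)\neq y}$, so that $l(s,u)=P\gamma(u)-P\gamma(s)$ and, by the margin condition, $l(s,u)\geq h\,d(s,u)$ with $d(s,u)=\mathbb{P}(s(X)\neq u(X))$. For each $(M,T)$ let $s_{M,T}$ minimise $l(s,\cdot)$ over $S_{M,T}$, so $l(s,S_{M,T})=l(s,s_{M,T})$, and introduce the centred empirical process $\nu_{n_2}(u)=(P-P_{n_2})(\gamma(u,\cdot)-\gamma(s,\cdot))$, where $P_{n_2}\gamma=\gamma_{n_2}$. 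From the very definition of $\widehat{(M,T)}$ and from $\gamma_{n_2}(\hat s_{M,T})\leq\gamma_{n_2}(s_{M,T})$, a one-line rearrangement gives, for every $(M,T)$,
\begin{equation*}
l(s,\tilde s)\ \leq\ l(s,s_{M,T}) + \big[\nu_{n_2}(\tilde s)-\nu_{n_2}(s_{M,T})\big] + pen(M,T) - pen(\widehat{M,T}).
\end{equation*}

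The core of the argument is to control the fluctuation $\nu_{n_2}(\tilde s)-\nu_{n_2}(s_{M,T})$ uniformly over the collection. Here I would exploit that, in classification, each $S_{M,T}$ is a \emph{finite} set: a $\{0,1\}$-valued function that is constant on each of the $|T|$ leaves is determined by $|T|$ bits, so $\#S_{M,T}\leq 2^{|T|}$. For a fixed $u\in S_{M',T'}$ the increment $\gamma(u,\cdot)-\gamma(s_{M',T'},\cdot)$ is bounded by $1$ and has variance at most $d(u,s_{M',T'})$, so Bernstein's inequality yields, with probability at least $1-e^{-x}$,
\begin{equation*}
\nu_{n_2}(u)-\nu_{n_2}(s_{M',T'})\ \leq\ \sqrt{\tfrac{2\,d(u,s_{M',T'})\,x}{n_2}}+\tfrac{x}{3n_2}.
\end{equation*}
Using $d(u,s_{M',T'})\leq d(s,u)+d(s,s_{M',T'})$, the margin bound $d(s,\cdot)\leq l(s,\cdot)/h$, and then $\sqrt{ab}\leq \theta a+b/(4\theta)$, this becomes a bound of the form $\tfrac12\big(l(s,u)+l(s,s_{M',T'})\big)+c\,\tfrac{x}{n_2 h}$, which is exactly the shape needed to recover a fast rate.

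The decisive step is the combinatorial union bound turning the pointwise deviation into a uniform one over the whole collection. I would attach to each model a weight $x_{M,T}=a\,|T|+b\,|M|\big(1+\log(p/|M|)\big)$ and sum the exceptional probabilities as
\begin{equation*}
\sum_{M\in\mathcal{P}(\Lambda)}\ \sum_{T\preceq T_{max}^{(M)}}\ \#S_{M,T}\; e^{-x_{M,T}}\ \leq\ \sum_{M}e^{-b|M|(1+\log(p/|M|))}\ \sum_{T}2^{|T|}4^{|T|}e^{-a|T|},
\end{equation*}
using that the number of subtrees of $T_{max}^{(M)}$ with a prescribed number of leaves is bounded by a Catalan-type factor $4^{|T|}$, and that $\binom{p}{|M|}\leq (ep/|M|)^{|M|}=e^{|M|(1+\log(p/|M|))}$. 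The inner sum converges once $a>\log 8$ and the outer sum once $b>1$; this is precisely the role of the term $|M|(1+\log(p/|M|))$ in the penalty, which compensates the $\binom{p}{|M|}$ possible choices of variable subset. Choosing $a,b$ (hence the thresholds $\alpha_0,\beta_0$) so that $c\,x_{M,T}/(n_2h)$ is dominated by $pen(M,T)$ lets the $|T|$-part be absorbed into $\alpha|T|/(n_2h)$ and the $|M|$-part into the $\beta$-term, the residual mass of the sum contributing the additive $C_2/(n_2h)$.

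Finally I would assemble the pieces: on the global good event the basic inequality becomes $l(s,\tilde s)\leq l(s,s_{M,T})+pen(M,T)+\tfrac12\big(l(s,\tilde s)+l(s,s_{M,T})\big)+(\text{absorbed terms})$, so moving $\tfrac12 l(s,\tilde s)$ to the left produces the constant $C_1$ (close to $1$ for large $\alpha,\beta$); taking the infimum over $(M,T)$ and integrating the deviation over the levels $x$ yields the conditional expectation with the $C_2/(n_2h)$ remainder. I expect the combinatorial weight step to be the main obstacle: the delicate point is to calibrate the penalty so that the $\binom{p}{|M|}$ entropy of variable subsets is matched \emph{exactly} by $|M|(1+\log(p/|M|))$ while preserving the margin-driven fast rate $1/(n_2h)$, rather than the slow $1/\sqrt{n_2}$ rate a naive variance bound would give.
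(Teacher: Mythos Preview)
Your proposal is correct and reaches the stated oracle inequality, but the route is genuinely different from the paper's. The paper does \emph{not} exploit the finiteness of $S_{M,T}$: instead it introduces the normalised supremum
\[
V_{M',T'}=\sup_{u\in S_{M',T'}}\frac{|\bar\gamma_{n_2}(u)-\bar\gamma_{n_2}(s_{M,T})|}{(d(s,s_{M,T})+d(s,u))^2+y_{M',T'}^2},
\]
controls it via Talagrand's concentration inequality (Lemma~\ref{Talagrand}), and bounds $\mathbb{E}[V_{M',T'}]$ through the Massart--N\'ed\'elec maximal inequality (Lemmas~\ref{Maximal}, \ref{Nedelec}, \ref{combi}), which ultimately yields $\sigma_T^2\lesssim |T|/(n_2h)$. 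The weights $x_{M,T}$ in Lemma~\ref{lemme1-poids} then only have to compensate the number of \emph{subtrees}, whence the threshold $a>2\log 2$. Your argument replaces all of this empirical-process machinery by a pointwise Bernstein bound plus a union bound over the $2^{|T|}$ classifiers in $S_{M,T}$; this is more elementary and specific to the binary case, at the price of a slightly larger threshold ($a>3\log 2$ rather than $2\log 2$). Both routes produce the same penalty shape and the same $1/(n_2h)$ rate. One notational slip to fix in your write-up: in the Bernstein step you centre at $s_{M',T'}$, but the basic inequality requires $\nu_{n_2}(\tilde s)-\nu_{n_2}(s_{M,T})$ with $s_{M,T}$ from the \emph{comparison} model; the cleanest repair is to centre at the Bayes $s$ (variance $\le d(s,u)\le l(s,u)/h$) and bound $\nu_{n_2}(\tilde s)$ and $-\nu_{n_2}(s_{M,T})$ separately, which is in fact what your final assembly implicitly uses.
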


The penalty is the sum of two terms. The first one is proportional to $\frac{\vert T \vert}{n_2}$ and corresponds to the penalty proposed by breiamn {\it{et al.}} (\cite{Breiman}) in their pruning algorithm. The other one is proportional to ${ \frac{\vert M
  \vert}{n_2} \left(1+ \log\left( \frac{p}{\vert M \vert} \right)\right) }$ and is due to the variable selection. It penalizes models that are based on too much explanatory variables.
For a given value of $\vert M \vert $, this result validates the CART pruning algorithm in the binary classification framework, result proved also by Gey (\cite{Gey}) in a more general situation since the author consider a less stronger margin condition.

Thanks to this penalty function, the problem can be divided in practice in
two steps:
\begin{enumerate}
\item[-]
First, for every set of variables $M$, we select a subtree
$\hat{T}_M$ of $T_{max}^{(M)}$ by
$$\hat{T}_M =\underset{T \preceq
T_{max}^{(M)}}{argmin} \left\{  \gamma_{n_2} (\hat{s}_{M,T}) +
\alpha^{\prime}  \frac{|T|}{n_2} \right\}.$$
This means that $\hat{T}_M$ is a tree obtained by the CART pruning procedure using the subsample $\mathcal{L}_2$

\item[-]
Then we choose a set $\hat{M}$ by minimizing a criterion which penalizes the big sets of variables:
$$ \hat{M} = \underset{M \in \mathcal{P}(\Lambda)}{argmin}  \left\{ \gamma_{n_2} (\hat{s}_{M,\hat{T}_M}) + pen(M,\hat{T}_M)  \right\} .$$
\end{enumerate}

The $(M1)$ situation permits to work conditionally to the construction of the maximal trees $T_{max}^{(M)}$
and to select a model among a deterministic collection.
Finding a convenient penalty to select a model among a deterministic collection is easier, but we have not always enough observations to split the training sample $\mathcal{L}$ in three subsamples. This is the reason why we study now the $(M2)$ situation.\\

\underline{$\bullet \ (M2)$ case :}\\
We manage to extend our result
for only one subsample $\mathcal{L}_1$.
But, while in the $(M1)$ method we work with the expected loss,
here we need the expected loss conditionally to ${ \{X_i, \ (X_i,Y_i) \in \mathcal{L}_1 \} }$ defined by:
\begin{eqnarray} \label{exploss}
l_1(s,u)=\mathbb{P} \left( u(X) \neq Y \vert \{X_i, \ (X_i,Y_i) \in
\mathcal{L}_1 \} \right) - \mathbb{P} \left( s(X) \neq Y \vert \{X_i,
\ (X_i,Y_i) \in \mathcal{L}_1 \} \right).
\end{eqnarray}

\begin{Prop} \label{propc2}
Let consider a penalty function of the form:
$ \forall \ M \in \mathcal{P}(\Lambda) $ and $\forall \ T \preceq
T_{max}^{(M)}$
$$ pen(M,T)  = \alpha \left[1+ \left( \vert M \vert +1 \right)
\left( 1+ \log \left( \frac {n_1}{\vert M \vert +1} \right) \right) \right]
\frac{\vert T \vert}{n_{1}h}
+ \beta  \frac{ \vert M \vert}{n_{1}h}\left(1 + \log \left( \frac{p}{\vert M \vert} \right) \right). $$
If $\alpha > \alpha_0$ and $\beta > \beta_0$, then there exists
three positive constants $C_1 >2$, $C_2$, $\Sigma$ which only depend on $\alpha$ and $\beta$,
such that, with probability $\geq 1- e^{-\xi}{\Sigma}^2 $:
$$
l_1(s,\tilde{s})  \le  C_{1}\underset{(M,T)}{\inf} \bigg\{ l_1 \left(
s,S_{M,T} \right) + pen_{n}(M,T) \bigg\} +
\frac{C_2}{n_1h}\left(1+\xi \right)
$$
where \ $l_1(s,S_{M,T})=\underset{u \in S_{M,T}}{\inf}l_1(s,u)$. \\
\end{Prop}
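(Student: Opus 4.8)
The plan is to reduce the $(M2)$ problem to a deterministic model–selection problem by a judicious conditioning, and then to run a Talagrand-type concentration argument in the spirit of Proposition \ref{propc1}. The essential difficulty is that here the collection $\{S_{M,T}\}$ is built from the very sample $\mathcal{L}_1$ that also serves to compute the minimizers $\hat{s}_{M,T}$ and to prune, so one cannot simply condition on $\mathcal{L}_1$ and treat the family as fixed, as in $(M1)$. The key observation is that it suffices to condition on the design $\mathcal{X}_1=\{X_i : (X_i,Y_i)\in\mathcal{L}_1\}$ alone. Given $\mathcal{X}_1$, the admissible splits are the finitely many cuts located at the observed coordinates, so the set of all partitions $\tilde T$ — and hence the whole collection $\{S_{M,T}: M\in\mathcal{P}(\Lambda),\ T\preceq T_{max}^{(M)}\}$ ranging over every tree CART could conceivably produce — becomes deterministic, the only remaining randomness being carried by the labels $Y_i$. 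This is precisely why the statement is phrased through the conditional loss $l_1$ of (\ref{exploss}) and holds in probability rather than in expectation.

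Next I would exploit the margin condition to obtain a linear control of variance by excess loss. Since all predictors are $\{0,1\}$-valued, $|\gamma(u,\cdot)-\gamma(s,\cdot)|=\indic_{u(X)\neq s(X)}$, so writing $d^2(s,u)=\mathbb{P}(u(X)\neq s(X)\mid\mathcal{X}_1)$ one has $\mathrm{Var}(\gamma(u,\cdot)-\gamma(s,\cdot))\le d^2(s,u)$, while the margin condition $|2\eta(x)-1|\ge h$ gives $l_1(s,u)\ge h\,d^2(s,u)$, whence $\mathrm{Var}\le l_1(s,u)/h$. This linear link is what produces the fast $1/(n_1 h)$ rate and the $1/h$ factors in the penalty. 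I would then introduce the centered empirical process $\nu(u)=(P_{n_1}-P)(\gamma(u,\cdot)-\gamma(s,\cdot))$ and, for each model $(M,T)$, bound $\sup_{u\in S_{M,T}}|\nu(u)|$ by Bousquet's form of Talagrand's inequality, the variance entering the deviation being exactly $d^2$.

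The crux is to attach to each model a weight $x_{M,T}$ with $\sum_{M,T}e^{-x_{M,T}}=\Sigma^2<\infty$ and such that the penalty dominates $\mathrm{const}\times(|T|+x_{M,T})/(n_1 h)$. The sum splits into a sum over subsets $M$ and, for fixed $M$, a sum over trees. For the former, $\binom{p}{|M|}\le(ep/|M|)^{|M|}$ yields the term $\beta\frac{|M|}{n_1 h}(1+\log(p/|M|))$, exactly as in Proposition \ref{propc1}. The genuinely new ingredient is the sum over trees: since we may \emph{not} condition on a single maximal tree, we must count all partitions with $|T|=D$ leaves generated by binary splits on the $|M|$ variables at the $\mathcal{X}_1$-cuts. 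A counting lemma of the type collected in Section \ref{Appendix} bounds the number of such partitions by a quantity whose logarithm is of order $D\,(|M|+1)\bigl(1+\log(n_1/(|M|+1))\bigr)$, via $\binom{n_1}{|M|+1}\le(en_1/(|M|+1))^{|M|+1}$. Choosing $x_{M,T}$ of this order forces the first penalty term to carry the factor $\bigl[1+(|M|+1)(1+\log(n_1/(|M|+1)))\bigr]$, which is exactly the discrepancy with the $(M1)$ penalty and the reason that $C_1>2$ rather than $C_1>1$.

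To conclude I would start from the defining inequality $\gamma_{n_1}(\hat{s}_{\widehat{M,T}})+pen(\widehat{M,T})\le\gamma_{n_1}(s_{M,T})+pen(M,T)$, valid for every $(M,T)$ and every $s_{M,T}\in S_{M,T}$, rewrite it in terms of $l_1$ and $\nu$, insert the uniform concentration bounds with the chosen weights, and use $\alpha>\alpha_0$, $\beta>\beta_0$ to absorb the fluctuation and the quadratic variance terms into the penalty on the selected side. Summing the excess probabilities over all models through the weights $x_{M,T}$ produces the event of probability at least $1-\Sigma^2e^{-\xi}$ on which the announced oracle inequality holds, the residual $\frac{C_2}{n_1 h}(1+\xi)$ arising from the deviation parameter $\xi$ in Bousquet's inequality. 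The step I expect to be most delicate is the tree-counting lemma together with the attendant choice of weights: everything downstream is a standard, if lengthy, reprise of the $(M1)$ argument, whereas getting the combinatorial factor right — neither too large, which would spoil the rate, nor too small, which would break summability of $\sum e^{-x_{M,T}}$ — is where the real work lies.
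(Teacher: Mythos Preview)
Your plan is essentially the paper's own strategy: condition on the design $\mathcal{X}_1$, enlarge the family to all trees on the grid $\mathcal{M}_{n_1,M}$ so that it becomes deterministic, exploit the margin condition to get $d^2(s,u)\le l_1(s,u)/h$, apply a Talagrand-type inequality to the recentred process, and choose weights $x_{M,T}$ via the VC/tree-counting lemma (Lemma~\ref{lemme2-poids}) to obtain the penalty shape with the extra factor $(|M|+1)(1+\log(n_1/(|M|+1)))$. All of this is right, and the tree-counting step you single out as delicate is indeed the point where Lemma~\ref{lemme2-poids} replaces Lemma~\ref{lemme1-poids}.

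There is one point where your explanation is slightly off and where the paper does something you do not quite describe. You attribute both the $\Sigma^2$ in the probability bound and the constraint $C_1>2$ (rather than $C_1>1$) to the combinatorial factor. That is not the mechanism. The combinatorics only changes the \emph{shape} of the weight $x_{M,T}$ and hence of the penalty; it does not double the leading constant. What doubles everything is a symmetrisation: in $(M2)$ the reference element $s_{M,T}\in S_{M,T}$ lives in a model that is itself random given $\mathcal{X}_1$ (since $T_{max}^{(M)}$ depends on the labels), so the paper introduces a process indexed by \emph{pairs} $((M',T'),(M,T))$, with localisation radius $y_{M',T'}+y_{M,T}$, and takes a union bound over \emph{both} indices in $\mathcal{P}(\Lambda)\times\mathcal{M}_{n_1,\cdot}$. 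This double union bound is what produces $(\Sigma')^2$ instead of $\Sigma'$, and the symmetric appearance of both $y_{\widehat{M,T}}^2$ and $y_{M,T}^2$ in the upper bound is what forces the penalty constant to double, yielding $2C_1$ with $C_1>1$, i.e.\ a leading constant $>2$. Your sketch, which controls $\sup_{u\in S_{M,T}}|\nu(u)|$ model by model and sums once, would naturally give $\Sigma$ and would not by itself explain the factor $2$; to match the statement you need this pairwise symmetrisation, which you should make explicit.
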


When we consider the $(M2)$ situation instead of the $(M1)$ one, we obtain only an inequality with high probability instead of a result in expectation, Indeed, since all the results are obtained conditionally to the construction of the maximal tree, in this second situation, it is impossible to integrate with respect to $\mathcal{L}_1$ whereas in the first situation, we integrated with respect to $\mathcal{L}_2$.\\

Since the penalized criterion depends on two parameters $\alpha$ and $\beta$, we obtain a family of predictors $\tilde{s}=\hat{s}_{\widehat{M,T}}$ indexed by $\alpha$ and $\beta$, and the associated family of sets of variables $\hat{M}$.
Now, we choose the final predictor using test sample and we deduce the corresponding set of selected variables.

\subsection{Final selection}

Now, we have a collection of predictors
$$ \mathcal{G} = \left\{ \tilde{s}(\alpha,\beta); \ \alpha>\alpha_0 \text{ and
} \beta > \beta_0 \right\} $$
which depends on $\mathcal{L}_1$ and $\mathcal{L}_2$. \\

For any $M$ of $ \mathcal{P}\left( \Lambda \right)$, the set $\left\{ T \preceq T_{max}^{(M)} \right\}$ is finite.
As $\mathcal{P}\left( \Lambda \right)$ is finite too, the cardinal $\mathcal{K}$ of $\mathcal{G}$ is finite and
$$ \mathcal{K} \leq \sum_{M \in \mathcal{P}(\Lambda)} \mathcal{K}_M $$
where $\mathcal{K}_M$ is the number of subtrees of $T_{max}^{(M)}$ obtained
by the pruning algorithm defined by Breiman {\it et al.} (\cite{Breiman}).
$\mathcal{K}_M$ is very smaller than $\left| \left\{ T \preceq T_{max}^{(M)} \right\} \right|$.
Given the subsample $\mathcal{L}_3$,
we choose the final estimator $\tilde{\tilde{s}}$ by minimizing
the empirical contrast $\gamma_{n_3}$ on $\mathcal{G}$.
$$ \tilde{\tilde{s}} = \underset{\tilde{s}(\alpha,\beta) \in
\mathcal{G}}{argmin} \ \gamma_{n_3}\left( \tilde{s}(\alpha,\beta) \right) $$

The next result validates the final selection for the $(M1)$ method.

\begin{Prop} \label{propcf}
For any $\eta \in (0,1)$, we have:
$$ \mathbb{E} \bigg[ l \left( s,\tilde{\tilde{s}} \right) \ |
\mathcal{L}_1, \mathcal{L}_2 \bigg] \leq
\frac{1+ \eta}{1- \eta} \underset{ (\alpha,\beta) }{\inf}
\bigg\{ l \left( s,\tilde{s}(\alpha,\beta) \right) \bigg\}
+ \frac{ \left( \frac{1}{3}+\frac{1}{\eta} \right) \frac{1}{1-\eta} }{n_3 h}
\log \mathcal{K} + \frac{ \frac{ 2\eta+\frac{1}{3}+\frac{1}{\eta} }{1-\eta} }{n_3 h}. $$
\end{Prop}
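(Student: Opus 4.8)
The plan is to read this as a hold-out (test-sample) selection problem. Conditionally on $\mathcal{L}_1$ and $\mathcal{L}_2$, the collection $\mathcal{G}$ is a \emph{deterministic} finite family of $\mathcal{K}$ predictors and $\mathcal{L}_3$ is an independent sample, so $\tilde{\tilde{s}}$ is an empirical risk minimiser over a fixed finite set. Introducing the centered empirical process
\begin{equation*}
\nu_{n_3}(u) = \mathbb{E}\!\left[\gamma(u,\cdot)-\gamma(s,\cdot)\right] - \gamma_{n_3}(u) + \gamma_{n_3}(s),
\end{equation*}
where all expectations are taken conditionally on $\mathcal{L}_1,\mathcal{L}_2$, the defining property $\gamma_{n_3}(\tilde{\tilde{s}}) \le \gamma_{n_3}(\tilde{s}(\alpha,\beta))$ rewrites, after subtracting $\gamma_{n_3}(s)$, as
\begin{equation*}
l(s,\tilde{\tilde{s}}) \le l(s,\tilde{s}(\alpha,\beta)) + \nu_{n_3}(\tilde{\tilde{s}}) - \nu_{n_3}(\tilde{s}(\alpha,\beta)).
\end{equation*}
I would then fix $(\alpha,\beta)$ to be the conditionally deterministic minimiser of $l(s,\tilde{s}(\alpha,\beta))$, so that the first term becomes $\inf_{(\alpha,\beta)} l(s,\tilde{s}(\alpha,\beta))$, and the whole task reduces to controlling the two fluctuation terms.

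The heart of the argument is a variance/loss comparison provided by the margin condition. Since $u$ and $s$ take values in $\{0,1\}$, one has pointwise $|\gamma(u,\cdot)-\gamma(s,\cdot)| = \indic_{u(X)\neq s(X)}$, hence $\mathrm{Var}\!\left(\gamma(u,\cdot)-\gamma(s,\cdot)\right) \le \mathbb{P}(u(X)\neq s(X))$, whereas $l(s,u) = \mathbb{E}[\,\indic_{u(X)\neq s(X)}\,|2\eta(X)-1|\,] \ge h\,\mathbb{P}(u(X)\neq s(X))$; together these give the variance bound $l(s,u)/h$, and the increments are bounded by $1$. I would therefore apply Bernstein's inequality to each fixed $u\in\mathcal{G}$: for every $t>0$, with probability at least $1-e^{-t}$,
\begin{equation*}
\nu_{n_3}(u) \le \sqrt{\frac{2\,l(s,u)\,t}{h\,n_3}} + \frac{t}{3\,n_3}.
\end{equation*}
A union bound over the $\mathcal{K}$ elements of $\mathcal{G}$ (replacing $t$ by $t+\log\mathcal{K}$) controls $\nu_{n_3}(\tilde{\tilde{s}})$ uniformly, while a single application at the fixed oracle controls $-\nu_{n_3}(\tilde{s}(\alpha,\beta))$.

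Next I would linearise the square roots by Young's inequality in the form $\sqrt{2\,l\,c}\le \eta\,l + c/\eta$ with $c$ equal to $t/(h n_3)$ or $(t+\log\mathcal{K})/(h n_3)$, turning the $\sqrt{l(s,\tilde{\tilde{s}})}$ term into $\eta\,l(s,\tilde{\tilde{s}})$ plus a deterministic remainder. Moving $\eta\,l(s,\tilde{\tilde{s}})$ to the left produces the factor $(1-\eta)$, the oracle loss picks up an extra $\eta$ from its own square root, and dividing through yields the announced constant $\frac{1+\eta}{1-\eta}$; using $h\le 1$ lets me write every residual term over $n_3 h$, with the $\log\mathcal{K}$ contribution attached only to the uniformly controlled term $\nu_{n_3}(\tilde{\tilde{s}})$. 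Since the resulting deviation bound holds on an event of probability at least $1-2e^{-t}$ for all $t>0$, I would finally integrate the exponential tail, via $\mathbb{E}[(l(s,\tilde{\tilde{s}})-A)_+]=\int_0^\infty \mathbb{P}(l(s,\tilde{\tilde{s}})-A> x)\,dx$, to pass from the deviation statement to the stated bound in conditional expectation.

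I expect the main obstacle to be the bookkeeping of constants rather than any conceptual difficulty: reproducing exactly the coefficients $(\tfrac13+\tfrac1\eta)\tfrac1{1-\eta}$ and $\tfrac{2\eta+\frac13+\frac1\eta}{1-\eta}$ requires careful tracking of the Bernstein remainder, the choice of Young parameter, and the constant produced by integrating the tail, together with the correct split between the union-bounded control of $\nu_{n_3}(\tilde{\tilde{s}})$ (which alone carries $\log\mathcal{K}$) and the single-function control at the oracle. The structural ingredients, by contrast, are exactly the finiteness and independence granted by $(M1)$, and the variance/loss comparison $\mathrm{Var}\le l/h$ supplied by the margin condition.
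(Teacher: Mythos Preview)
Your approach is correct and is essentially the same as the paper's, with one packaging difference: the paper's proof is a one-line application of the Hold-Out lemma (Lemma~\ref{Boucheron}, taken from Massart's Saint-Flour notes), which already encapsulates the Bernstein-plus-union-bound-plus-linearisation argument you sketch. Concretely, the paper checks the variance hypothesis of that lemma via the margin condition, obtaining $w(\epsilon)=\epsilon/\sqrt{h}$, so that $\sqrt{n_3}\,\delta_*^2=w(\delta_*)$ gives $\delta_*^2=1/(n_3 h)$; plugging this into the lemma's conclusion and dividing by $(1-\theta)$ with $\theta=\eta$ yields exactly the stated constants. Your proposal simply unwinds the proof of that black-box lemma, so the ingredients and the final constants coincide.
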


For the $(M2)$ method, we get exactly the same result except that the loss $l$ is replaced by the conditional loss $l_1$ (\ref{exploss}).\\

For the $(M1)$ method, since the results in expectation of the {\bf{Propositions \ref{propc1}}} and {\bf{\ref{propcf}}} involve the same expected loss, we can compare the final estimator $\tilde{\tilde{s}}$ with the entire collection of models:
$$
\mathbb{E} \bigg[ l(s,\tilde{\tilde{s}}) \ | \mathcal{L}_1,
\mathcal{L}_2 \bigg] \leq \tilde{C}_1 \underset{(M,T)}{\inf} \ \bigg\{
l(s,S_{M,T}) + pen(M,T) \bigg\} + \frac{C_2}{n_2h}+ \frac{C_3}{n_3h}\bigg(1+ \log \mathcal{K} \bigg).
$$

In the classification framework, it may be possible to obtain sharper upper bounds by considering for instance the version of  Talagrand concentration inequality developed by Rio (\cite{Rio}), or another margin condition as the one proposed by Koltchinskii (see \cite{Kolt}) and used by Gey (\cite{Gey}). However, the idea remains the same and those improvement do not have a real interest since we do not get in our work precise calibration of the constants.

\section{Regression} \label{Regression}

Let us consider the regression framework where the $\varepsilon_i$
are supposed to have exponential moments around $0$ conditionally to $X_i$ (cf. \ref{A}).

In this section, we add a stop-splitting rule in the CART growing procedure.
During the construction of the maximal trees $T_{max}^{(M)}$,
$M \in \mathcal{P}(\Lambda)$, a node is split only if the
two resulting nodes contain, at least, $N_{min}$ observations. \\

As in the classification section, the following subsection gives results on the
variable selection
for the methods $(M1)$ and $(M2)$ and the last subsection deals with the final selection by test sample $\mathcal{L}_3$.

\subsection{Variable selection via $(M1)$ and $(M2)$} \label{M1}
In this subsection, we show that for convenient constants $\alpha$ and $\beta$, the same form of penalty function as in classification framework leads to an oracle bound.\\

\underline{$\bullet \ (M1)$ case :}

\begin{Prop} \label{propM1}
Let suppose that $\|s\|_{\infty} \leq R$, with $R$ a positive constant. \\
Let consider a penalty function of the form:
$ \forall \ M \in \mathcal{P}(\Lambda) $ and $\forall \ T \preceq
T_{max}^{(M)}$
$$ pen(M,T) = \alpha \left( \sigma^2 + \rho R \right) \frac{|T|}{n_2}
+ \beta \left( \sigma^2 + \rho R \right) \frac{|M|}{n_2} \left(1+ \log\left(\frac{p}{|M|}\right)\right). $$
If $ p \leq \log n_2 $, $N_{min} \geq 24 \frac{\rho^2}{\sigma^2} \log n_2$,
$ \alpha > \alpha_0 $ and $ \beta > \beta_0 $, \\
then there exists two positive constants $C_1 >2$ and $C_2$,
which only depend on $\alpha$ and $ \beta$, such that:
\begin{eqnarray*}
\mathbb{E} \left[ \left\| s - \tilde{s} \right\|_{n_2}^2 \ |
\mathcal{L}_1 \right] &\leq &
C_1 \underset{(M,T)}{\inf} \left\{ \underset{u \in S_{M,T}}{\inf}
\left\| s-u \right\|_{\mu}^2 + pen(M,T) \right\}
+C_2 \frac{(\sigma^2 + \rho R)}{n_2} \\
& & + C(\rho, \sigma, R)
\frac{\1_{\rho \neq 0}}{n_2 \log n_2}
\end{eqnarray*}
where $\| \ . \ \|_{n_2}$ denotes the empirical norm on $\{ X_i; \ (X_i,Y_i) \in \mathcal{L}_2 \}$ and $C(\rho, \sigma, R)$ is a constant which only depends on $\rho$, $\sigma$ and $R$.
\end{Prop}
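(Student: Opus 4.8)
The plan is to reduce this regression statement to a general model-selection theorem for least-squares estimators over a deterministic collection of finite-dimensional models, applied conditionally on $\mathcal{L}_1$. Working conditionally on $\mathcal{L}_1$ renders the collection $\{S_{M,T}\}$ deterministic, and the estimator $\hat{s}_{M,T}$ becomes the empirical $\mathbb{L}^2$-projection of $s$ (plus noise) onto $S_{M,T}$ using the independent sample $\mathcal{L}_2$. The first step is to recall the classical decomposition: for the selected pair $\widehat{(M,T)}$, the penalized-criterion inequality $\gamma_{n_2}(\hat{s}_{\widehat{M,T}})+pen(\widehat{M,T})\le \gamma_{n_2}(u)+pen(M,T)$ for every competitor $u\in S_{M,T}$ yields, after introducing the centered empirical process $\bar\gamma_{n_2}(u)=\gamma_{n_2}(u)-\gamma_{n_2}(s)$, a bound of the form
\begin{eqnarray*}
\|s-\hat{s}_{\widehat{M,T}}\|_{n_2}^2 \le \inf_{u\in S_{M,T}}\|s-u\|_{n_2}^2 + pen(M,T) - pen(\widehat{M,T}) + (\text{empirical process terms}).
\end{eqnarray*}
The empirical process terms are the linear contribution $\nu_{n_2}(u)=\tfrac{2}{n_2}\sum_{\mathcal{L}_2}\varepsilon_i(u(X_i)-s(X_i))$ evaluated on the two models. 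The goal is to show that the penalty dominates these fluctuations uniformly over the collection, leaving a constant times the infimum on the right-hand side.

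The heart of the argument is the concentration step. For a fixed model $S_{M,T}$ of dimension $|T|$, I would control $\sup_{u\in S_{M,T}}\nu_{n_2}(u)/\|s-u\|_{\mu}$ via a chi-square-type deviation bound for the noise $\varepsilon_i$. Here the exponential-moment assumption (\ref{A}) is essential: it gives Bernstein-type tail control with parameters $\sigma^2$ and $\rho$, which is exactly why the penalty carries the factor $(\sigma^2+\rho R)$. Summing the resulting exponential deviation bounds over all models requires a weight $x_{M,T}$ with $\sum_{(M,T)} e^{-x_{M,T}}$ finite; the natural choice $x_{M,T}\propto |T| + |M|(1+\log(p/|M|))$ works because the number of subtrees of $T_{max}^{(M)}$ with $|T|$ leaves grows at most exponentially in $|T|$, while the number of subsets $M$ with $|M|$ elements is $\binom{p}{|M|}\le (ep/|M|)^{|M|}$, whose logarithm is precisely the $|M|(1+\log(p/|M|))$ term. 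This is how both pieces of the penalty arise simultaneously and why $\alpha,\beta$ must exceed threshold constants $\alpha_0,\beta_0$.

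The main obstacle I anticipate is twofold. First, the result is stated for the \emph{empirical} norm $\|s-\tilde{s}\|_{n_2}^2$ on the left but the \emph{integrated} norm $\|s-u\|_{\mu}^2$ on the right, so I must pass between the two: this needs a comparison of $\|\cdot\|_{n_2}$ and $\|\cdot\|_{\mu}$ over the models, which is the role of the conditions $p\le\log n_2$ and $N_{min}\ge 24\tfrac{\rho^2}{\sigma^2}\log n_2$. The minimum-leaf-size rule $N_{min}$ guarantees that each cell carries enough observations for the local averages to concentrate, bounding the ratio of empirical to true norm on the low-probability event where they differ; the bound $p\le\log n_2$ controls the total number $2^p\le n_2$ of variable subsets so that the union bound over $\mathcal{P}(\Lambda)$ costs only a $\log n_2$ factor. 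Second, the unbounded noise (when $\rho\ne 0$) forces a truncation argument: on the event that some $|\varepsilon_i|$ is too large the Bernstein control fails, and the residual contribution of this event is collected into the final term $C(\rho,\sigma,R)\tfrac{\indic_{\rho\ne 0}}{n_2\log n_2}$, which vanishes in the sub-Gaussian case $\rho=0$. Assembling these—the deviation bound, the weighted sum over the collection, the empirical-versus-integrated norm comparison, and the truncation remainder—and then taking expectation conditionally on $\mathcal{L}_1$ over the surviving probabilities yields the stated oracle inequality with $C_1>2$ and the advertised remainder terms.
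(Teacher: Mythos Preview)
Your overall architecture matches the paper: work conditionally on $\mathcal{L}_1$, apply the Birg\'e--Massart least-squares decomposition $(1-\theta)\|s-\tilde s\|_{n_2}^2=\Delta_{\widehat{M,T}}+\inf_{(M,T)}R_{(M,T)}$, control the chi-square term $\|\varepsilon_{M,T}\|_{n_2}^2$ and the linear term $\langle\varepsilon,s-s_{M,T}\rangle_{n_2}$ by Bernstein-type deviations from assumption~(\ref{A}), and absorb the union bound with weights $x_{M,T}=a|T|+b|M|(1+\log(p/|M|))$ exactly as in Lemma~\ref{lemme1-poids}. That part is essentially identical to the paper's argument.

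However, you misplace the role of the hypotheses $p\le\log n_2$ and $N_{min}\ge 24\rho^2\sigma^{-2}\log n_2$. The passage from $\|\cdot\|_{n_2}$ on the left to $\|\cdot\|_\mu$ on the right is \emph{free}: since $s_{M,T}$ is the $\|\cdot\|_{n_2}$-projection of $s$ onto $S_{M,T}$, one has $\|s-s_{M,T}\|_{n_2}^2\le\|s-u\|_{n_2}^2$ for every $u\in S_{M,T}$, and taking conditional expectation over $\mathcal{L}_2$ gives $\mathbb{E}[\|s-s_{M,T}\|_{n_2}^2\mid\mathcal{L}_1]\le\|s-u\|_\mu^2$ directly. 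No norm-comparison machinery is needed there. The two hypotheses enter elsewhere, and your description of the truncation event is the one used in $(M2)$, not $(M1)$. In the paper's proof the truncation set is
\[
\Omega=\bigcap_M\Big\{\forall\,t\in\widetilde{T_{max}^{(M)}}:\ \Big|\sum_{X_i\in t}\varepsilon_i\Big|\le \tfrac{\sigma^2}{\rho}\,|\{X_i\in t\}|\Big\},
\]
a bound on \emph{leaf averages}, not on individual $|\varepsilon_i|$. This is precisely where $N_{min}$ is used (via Lemma~\ref{faux-chi2}): each leaf of $T_{max}^{(M)}$ carries at least $N_{min}$ points, so the leaf sums concentrate and $\mathbb{P}(\Omega_M^c)\le 2\tfrac{n_2}{N_{min}}\exp(-\sigma^2 N_{min}/(4\rho^2))$. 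The condition $p\le\log n_2$ then controls the factor $2^p$ coming from the union over $M\in\mathcal{P}(\Lambda)$. On $\Omega^c$ one bounds $\|s-\tilde s\|_{n_2}^2\le\|s\|_{n_2}^2+\|\varepsilon_{\widehat{M,T}}\|_{n_2}^2$ crudely, and the two hypotheses make $\mathbb{P}(\Omega^c)$ small enough to produce the remainder $C(\rho,\sigma,R)\indic_{\rho\ne 0}/(n_2\log n_2)$. If instead you truncate individual $\varepsilon_i$ as you propose, you recover the $(M2)$ analysis and pick up extra logarithmic factors in the penalty; the leaf-average truncation is exactly what buys the cleaner penalty stated in Proposition~\ref{propM1}.
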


As in classification, the penalty function is the sum of two terms: one is proportional to $\frac{|T|}{n_2}$ and the other to $\frac{|M|}{n_2} \left(1+ \log\left(\frac{p}{|M|}\right)\right)$. The first term corresponds also to the penalty proposed by Breiman {\it et al.} (\cite{Breiman})
in their pruning algorithm and
validated by Gey and N\'ed\'elec (\cite{Nedelec2}) for the Gaussian regression case. This proposition validates the CART pruning penalty in a more general regression framework than the Gaussian one.\\

\begin{Rk}
In practice, since $\sigma^2$, $\rho$ and $R$ are unknown, we consider penalties of the form
$$ pen(M,T) = \alpha^{\prime} \frac{|T|}{n_2} + \beta^{\prime} \frac{|M|}{n_2}
\left(1+ \log\left(\frac{p}{|M|}\right)\right) $$
If $\rho = 0$, the form of the penalty is
$$ pen(M,T) = \alpha \sigma^2 \frac{|T|}{n_2} + \beta \sigma^2 \frac{|M|}{n_2} \left(1+ \log\left(\frac{p}{|M|}\right)\right), $$
the oracle bound becomes
$$
 \mathbb{E} \left[ \left\| s - \tilde{s} \right\|_{n_2}^2 \ | \mathcal{L}_1
\right] \leq C_1 \underset{(M,T)}{\inf} \left\{ \underset{u \in S_{M,T}}{\inf} \left\| s-u \right\|_{\mu}^2 + pen(M,T) \right\} +C_2 \frac{\sigma^2}{n_2},
$$
and the assumptions on $\|s\|_{\infty}$, $p$ and $N_{min}$ are
no longer required. Moreover, the constants $\alpha_0$ and $\beta_0$ can be taken
as follows:
 $$\alpha_0 = 2(1+3 \log 2) \ {\text{ and }} \ \beta_0 = 3.$$
 In this case $\sigma^2$ is the single unknown parameter which appears in the penalty. Instead of using $\alpha^{\prime}$ and $\beta^{\prime}$ as proposed above, we can in practice replace $\sigma^2$ by an estimator.
\end{Rk}

\underline{$\bullet \ (M2)$ case :}\\
In this situation, the same subsample $\mathcal{L}_1$ is used to build
the collection of models
$$ \left\{ S_{M,T}, \  M \in \mathcal{P}(\Lambda) \text{ and } T \preceq T_{max}^{(M)} \right\}$$
and to select one of them.\\
For technical reasons, we introduce the collection of models
$$ \left\{ S_{M,T}, \  M \in \mathcal{P}(\Lambda) \text{ and } T \in \mathcal{M}_{n_1,M} \right\} $$
where $\mathcal{M}_{n_1,M} $ is the set of trees built on the grid
$\left\{ X_i; \ \left( X_i, Y_i \right) \in \mathcal{L}_1 \right\}$
with splits on the variables in $M$.
This collection contains the preceding one and only depends on
$\left\{ X_i; \ \left( X_i, Y_i \right) \in \mathcal{L}_1 \right\}$.
We find nearly the same result as in the $(M1)$ situation.

\begin{Prop} \label{propM2}
Let suppose that $\|s\|_{\infty} \leq R$, with $R$ a positive constant. \\
Let consider a penalty function of the form:
$ \forall \ M \in \mathcal{P}(\Lambda) $ and $\forall \ T \preceq
T_{max}^{(M)}$
\begin{eqnarray*}
pen(M,T) & = & \alpha \left( \sigma^2 \left( 1+\frac{\rho^4}{\sigma^4} \log^2\left( \frac{n_1}{p} \right)  \right)
+ \rho R \right)
\left( 1 + \left( |M|+1 \right) \left( 1+ \log \left( \frac{n_1}{|M|+1} \right)  \right) \right)
\frac{|T|}{n_1} \\
& & + \beta \left( \sigma^2 \left( 1+\frac{\rho^4}{\sigma^4} \log^2\left( \frac{n_1}{p} \right)  \right) + \rho R \right) \frac{|M|}{n_1} \left(1+ \log \left(\frac{p}{|M|}\right)\right).
\end{eqnarray*}
If $ p \leq \log n_1 $, $ \alpha > \alpha_0 $ and $ \beta > \beta_0 $, \\
then there exists three positive constants $C_1 >2$, $C_2$ and $\Sigma$
which only depend on $\alpha$ and $ \beta$, such that:\\
$\forall \xi >0$, with probability $\geq 1 - e^{-\xi} \Sigma - \frac{c}{n_1 \log n_1}\1_{\rho \neq 0}$,
\begin{eqnarray*}
\left\| s - \tilde{s} \right\|_{n_1}^2 & \leq &
C_1 \underset{(M,T)}{\inf} \left\{ \underset{u \in S_{M,T}}{\inf}
\left\| s-u \right\|_{n_1}^2 + pen(M,T) \right\} \\
& & + \frac{C_2}{n_1} \left( \left( 1+\frac{\rho^4}{\sigma^4}\log^2\left( \frac{n_1}{p} \right)   \right) \sigma^2
+ \rho R \right) \xi
\end{eqnarray*}
where $\| \ . \ \|_{n_1}$ denotes the empirical norm on $\{ X_i; \ (X_i,Y_i) \in \mathcal{L}_1 \}$ and $c$ is a constant which depends on $\rho$ and $\sigma$.
\end{Prop}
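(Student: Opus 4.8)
The plan is to mirror the strategy of the $(M1)$ case (Proposition \ref{propM1}) but to circumvent the loss of independence by conditioning on the design points $\{X_i;\ (X_i,Y_i)\in\mathcal{L}_1\}$. Once the design is fixed, the enlarged family $\{S_{M,T},\ M\in\mathcal{P}(\Lambda),\ T\in\mathcal{M}_{n_1,M}\}$ becomes a \emph{deterministic} collection of finite-dimensional linear models, and the selected pair $\widehat{(M,T)}$, which lives in $\{T\preceq T_{max}^{(M)}\}\subset\mathcal{M}_{n_1,M}$, is chosen from within it. I would therefore first reduce the statement to an application of a general penalized least-squares model selection theorem for regression under the exponential-moment condition (\ref{A}), of the type developed in \cite{Sauve}, working throughout with the empirical norm $\|\cdot\|_{n_1}$ that is the natural geometry once the design is frozen. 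This is precisely why both the left-hand side and the oracle term on the right are expressed through $\|\cdot\|_{n_1}$ rather than $\|\cdot\|_\mu$.

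Then comes the comparison. Starting from the defining inequality $\gamma_{n_1}(\hat s_{\widehat{M,T}})+pen(\widehat{M,T})\le\gamma_{n_1}(\hat s_{M,T})+pen(M,T)$, valid for every $(M,T)$, I would use the decomposition $\gamma_{n_1}(u)-\gamma_{n_1}(s)=\|s-u\|_{n_1}^2-\frac{2}{n_1}\sum_{i}\varepsilon_i\,(u-s)(X_i)$ to replace contrasts by empirical norms plus a centered linear noise term $\nu_{n_1}(u-s)=\frac{1}{n_1}\sum_i\varepsilon_i(u-s)(X_i)$. Because each $S_{M,T}$ is the span of the indicators of the leaves of $T$, the minimizer $\hat s_{M,T}$ is the $\|\cdot\|_{n_1}$-orthogonal projection of $Y$ onto $S_{M,T}$, and the fluctuation $\sup_{u\in S_{M,T}}\nu_{n_1}(u-s)/\|u-s\|_{n_1}$ is a $\chi$-type quantity of dimension $|T|$. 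Assumption (\ref{A}) makes the noise sub-gamma, so the relevant concentration is of Bernstein type, with a sub-Gaussian part carrying $\sigma^2$ and a sub-exponential part carrying $\rho$; this is the source of the $(\sigma^2+\rho R)$ factors, and the squared sub-exponential contribution, once balanced against the $\log(n_1/p)$ grid resolution, produces the correction $\frac{\rho^4}{\sigma^4}\log^2(n_1/p)$.

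The combinatorial heart of the argument is the choice of weights $x_{M,T}$ obeying a Kraft-type inequality $\sum_{M,T}e^{-x_{M,T}}\le\Sigma<\infty$ over the deterministic collection. I would count separately: the number of subsets with $|M|$ fixed is $\binom{p}{|M|}\le(ep/|M|)^{|M|}$, whose logarithm yields the $|M|(1+\log(p/|M|))$ term; and the number of trees of $\mathcal{M}_{n_1,M}$ with a prescribed number of leaves $|T|$, where at each of the $|T|-1$ internal nodes a split is chosen among at most $|M|$ variables and $n_1$ grid thresholds, whose logarithm is of order $|T|\,(|M|+1)(1+\log(n_1/(|M|+1)))$. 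Choosing $x_{M,T}$ to dominate the sum of these two counts (plus a constant for summability) and feeding it into the concentration bound is exactly what forces the two penalty terms of the stated shape; the hypothesis $p\le\log n_1$ guarantees that the variable-counting contribution stays of lower order than the tree contribution and keeps $\Sigma$ finite.

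The hard part will be the uniform control of the empirical process over the \emph{entire} deterministic family $\mathcal{M}_{n_1,M}$ under only the local exponential-moment condition (\ref{A}), rather than under a Gaussian assumption as in Gey and N\'ed\'elec (\cite{Nedelec2}). Since (\ref{A}) controls $\log\mathbb{E}[e^{\lambda\varepsilon_i}|X_i]$ only for $|\lambda|<1/\rho$, the Bernstein bound holds only on a restricted range and one must truncate the large-deviation regime; the probability that the truncation fails is what yields the extra term $\frac{c}{n_1\log n_1}\1_{\rho\neq0}$ in the probability level, echoing the $\1_{\rho\neq0}$ correction already present in Proposition \ref{propM1}. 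Finally, summing the per-model deviations against the weights and reabsorbing the fluctuation of $\hat s_{\widehat{M,T}}$ into a fraction of $\|s-\tilde s\|_{n_1}^2$ (which is why the constant $C_1$ is only required to exceed $2$ and cannot be pushed to $1$) produces the announced high-probability oracle inequality. Unlike the $(M1)$ case, one cannot integrate the conditional bound against $\mathcal{L}_1$, so the result must remain a statement in probability rather than in expectation.
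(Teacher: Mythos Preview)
Your proposal is correct and follows essentially the same route as the paper: conditioning on the design, enlarging to the deterministic family $\{S_{M,T}:T\in\mathcal{M}_{n_1,M}\}$, using the Birg\'e--Massart decomposition with Bernstein-type concentration for the $\chi^2$-like term (Lemma~\ref{faux-chi2-2}) and for $\langle\varepsilon,s-s_{M,T}\rangle_{n_1}$, summing against the Kraft weights of Lemma~\ref{lemme2-poids}, and leaving the result in probability.

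Two small points deserve sharpening. First, the role of $p\le\log n_1$ is not to keep $\Sigma$ finite --- the sum in Lemma~\ref{lemme2-poids} converges regardless --- but to make the truncation event $\Omega=\{\forall i,\ |\varepsilon_i|\le\delta\sigma^2\}$ sufficiently likely: with the choice $\delta=5(\rho/\sigma^2)\log(n_1/p)$, one needs $\log(n_1/p)$ to be of order $\log n_1$ so that $\mathbb{P}(\Omega^c)\le 2n_1\exp(-\delta^2\sigma^2/(2(1+\rho\delta)))$ is $o(1/(n_1\log n_1))$. Second, the factor $\rho^4\sigma^{-4}\log^2(n_1/p)$ does not come from a ``grid resolution'' balancing but directly from this truncation level: Lemma~\ref{faux-chi2-2} introduces a $(1+\rho\delta)$ factor into the concentration of $\|\varepsilon_{M,T}\|_{n_1}^2$, and when this enters quadratically in the final constant $\Upsilon$ one obtains $(\rho\delta)^2\asymp(\rho^4/\sigma^4)\log^2(n_1/p)$.
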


Like in the $(M1)$ case, for a given $|M|$, we find a penalty
proportional to $\frac{|T|}{n_1}$ as
proposed by Breiman {\it et al.} and validated by Gey and N\'ed\'elec
in the Gaussian regression framework. So here again, we validate
the CART pruning penalty in a more general regression framework.\\
Unlike the $(M1)$ case, the multiplicative factor of $\frac{|T|}{n_1}$, in the penalty function, depends on $M$ and $n_1$. Moreover, in the method $(M2)$, the inequality is obtained only with high probability.

\begin{Rk}
If $\rho=0$, the form of the penalty is
$$ pen(M,T) = \alpha \sigma^2 \left[1 + (|M|+1)\left( 1 + \log\left( \frac{n_1}{|M|+1} \right) \right) \right] \frac{|T|}{n_1} + \beta \sigma^2 \frac{|M|}{n_1} \left(1+ \log\left(\frac{p}{|M|}\right)\right), $$
 the oracle bound is
$\forall \ \xi > 0$, with probability $\geq 1-e^{- \xi}\Sigma $,
$$ \left\| \tilde{s} - s \right\|_{n_1}^2 \leq C_1 \underset{(M,T)}{\inf} \left\{ \underset{u \in S_{M,T}}{\inf} \left\| s-u \right\|_{n_1}^2 + pen(M,T)
\right\} + C_2 \frac{\sigma^2}{n_1} \xi $$
 and the assumptions on $\|s\|_{\infty}$ and $p$ are no longer
required.
Moreover, we see that we can take
$\alpha_0 = \beta_0 = 3$.
\end{Rk}

\subsection{Final selection}

The next result validates this selection.

\begin{Prop} \label{propF}
\begin{itemize}
\item
In the $(M1)$ situation, taking $p\leq \log n_2$ and ${N_{min} \geq
4 \frac{\sigma^2 + \rho R}{R^2} \log n_2}$, we have: \\
for any $\xi >0$, with probability
$\geq 1 - e^{-\xi} - \1_{\rho \neq 0} \frac{R^2}{2(\sigma^2+ \rho R)} \frac{1}{n_2^{1- \log 2}}$,
$\forall \eta \in (0,1)$,

\begin{eqnarray*}
\left\| s - \tilde{\tilde{s}} \right\|_{n_3}^2 & \leq &
\frac{(1+\eta^{-1}-\eta)}{\eta^2}
\underset{\tilde{s}(\alpha,\beta) \in \mathcal{G}}{\inf}
\left\| s - \tilde{s}(\alpha,\beta) \right\|_{n_3}^2 \\
& & + \frac{1}{\eta^2} \left( \frac{2}{1-\eta}\sigma^2 + 8 \rho R \right)
\frac{(2 \log \mathcal{K} + \xi)}{n_3}.
\end{eqnarray*}

\item
In the $(M2)$ situation, denoting
$\epsilon(n_1) = 2 \1_{\rho \neq 0} n_1 \exp \left( - \frac{9 \rho^2 \log^2 n_1}{2(\sigma^2 + 3 \rho^2 \log n_1)} \right) $,
we have: \\
for any $\xi >0$, with probability
$\geq 1 - e^{-\xi} - \epsilon(n_1)$, $\forall \eta \in (0,1)$,

\begin{eqnarray*}
\left\| s - \tilde{\tilde{s}} \right\|_{n_3}^2 & \leq &
\frac{(1+\eta^{-1}-\eta)}{\eta^2}
\underset{\tilde{s}(\alpha,\beta) \in \mathcal{G}}{\inf}
\left\| s - \tilde{s}(\alpha,\beta) \right\|_{n_3}^2 \\
& & + \frac{1}{\eta^2} \left( \frac{2}{1-\eta}\sigma^2 + 4 \rho R
+ 12 \rho^2 \log n_1 \right)
\frac{(2 \log \mathcal{K} + \xi)}{n_3}.
\end{eqnarray*}
\end{itemize}
\end{Prop}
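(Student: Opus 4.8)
The plan is to treat the final selection as a hold-out (test-sample) model selection problem. Since the collection $\mathcal{G}=\{\tilde{s}(\alpha,\beta)\}$ is built on $\mathcal{L}_1$ and $\mathcal{L}_2$, it is independent of $\mathcal{L}_3$, so I would argue conditionally on $\mathcal{L}_1,\mathcal{L}_2$ and on the design $\{X_i:(X_i,Y_i)\in\mathcal{L}_3\}$, leaving as sole randomness the noise $(\varepsilon_i)_{(X_i,Y_i)\in\mathcal{L}_3}$. Writing $Y_i=s(X_i)+\varepsilon_i$ and expanding the quadratic contrast gives, for every $u$,
$$\gamma_{n_3}(u)-\gamma_{n_3}(s)=\|s-u\|_{n_3}^2-\nu_{n_3}(u),\qquad \nu_{n_3}(u)=\frac{2}{n_3}\sum_{(X_i,Y_i)\in\mathcal{L}_3}\varepsilon_i\,(u(X_i)-s(X_i)).$$
Since $\tilde{\tilde{s}}$ minimizes $\gamma_{n_3}$ over $\mathcal{G}$, for every $\tilde{s}(\alpha,\beta)\in\mathcal{G}$,
$$\|s-\tilde{\tilde{s}}\|_{n_3}^2\le\|s-\tilde{s}(\alpha,\beta)\|_{n_3}^2+\nu_{n_3}(\tilde{\tilde{s}})-\nu_{n_3}(\tilde{s}(\alpha,\beta)),$$
so everything reduces to controlling the linear noise functional $\nu_{n_3}$ uniformly over the finite family $\mathcal{G}$.

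For a fixed $u\in\mathcal{G}$ (conditionally deterministic), $\nu_{n_3}(u)$ is a sum of independent centered variables $\varepsilon_i(u(X_i)-s(X_i))$, to which assumption (\ref{A}) applies; a Bernstein inequality gives, with conditional probability at least $1-e^{-x}$, a bound of the form
$$\nu_{n_3}(u)\le \theta\,\|s-u\|_{n_3}^2+\frac{2\sigma^2}{\theta}\,\frac{x}{n_3}+2\rho\,\|u-s\|_\infty\,\frac{x}{n_3},$$
where the variance part $\sigma^2\|s-u\|_{n_3}^2$ has been split off by a weighted (Young) inequality with a free parameter $\theta\in(0,1)$. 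Applying this to both tails, taking a union bound over the $\mathcal{K}$ elements of $\mathcal{G}$ (replacing $x$ by $\xi+\log\mathcal{K}$, which produces the factor $2\log\mathcal{K}+\xi$) and substituting into the previous display, I collect the contributions $\theta\|s-\tilde{\tilde{s}}\|_{n_3}^2$ and $\theta\|s-\tilde{s}(\alpha,\beta)\|_{n_3}^2$, rearrange to isolate $\|s-\tilde{\tilde{s}}\|_{n_3}^2$, and take the infimum over $\mathcal{G}$. A suitable reparametrization of $\theta$ in terms of $\eta$ produces the announced constant $\frac{1+\eta^{-1}-\eta}{\eta^2}$ in front of the infimum and the $\frac{1}{\eta^2}\!\left(\frac{2}{1-\eta}\sigma^2+\dots\right)$ remainder; this last step is pure constant bookkeeping. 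The $e^{-\xi}$ in the failure probability is exactly the budget spent in this union bound.

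The main obstacle is the sup-norm factor $\|u-s\|_\infty$ in the Bernstein scale term, since $u=\tilde{s}(\alpha,\beta)$ is a data-dependent piecewise-constant estimator whose leaf values are empirical averages of $Y$ and are a priori unbounded; I would control it on a high-probability event built from $\mathcal{L}_1,\mathcal{L}_2$ only, hence independent of the $\mathcal{L}_3$-noise used above. In the $(M1)$ case the leaf values are $\mathcal{L}_2$-averages over cells containing at least $N_{min}$ points; a leaf-wise Bernstein bound, a union over the cells (using $p\le\log n_2$ to bound their number), the stop-splitting rule $N_{min}\ge 4\frac{\sigma^2+\rho R}{R^2}\log n_2$ and $\|s\|_\infty\le R$ force $\|u-s\|_\infty$ to be of order $R$ uniformly, on an event whose complement has probability at most $\1_{\rho\neq0}\frac{R^2}{2(\sigma^2+\rho R)}n_2^{-(1-\log 2)}$, turning the scale term into the $8\rho R$ contribution. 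In the $(M2)$ case no uniform constant bound is available, so a maximal inequality over the $\sim n_1$ grid cells gives $\|u-s\|_\infty$ of order $\rho\log n_1$ on an event of probability at least $1-\epsilon(n_1)$, which yields the extra $12\rho^2\log n_1$ term. Carefully defining these sup-norm events, bounding their failure probabilities by the stated quantities, and keeping them independent of the Bernstein step is the delicate part; once done, both assertions follow from the single hold-out argument above.
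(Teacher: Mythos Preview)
Your proposal is correct and follows essentially the same route as the paper's proof: a hold-out decomposition, a Bernstein-type deviation bound derived from assumption (\ref{A}) made uniform over $\mathcal{G}$ by a union bound, and a separate high-probability event on $\mathcal{L}_1,\mathcal{L}_2$ controlling the sup-norm term in the Bernstein scale. Two small tactical differences are worth noting: the paper applies Bernstein directly to the pairwise increments $\langle\varepsilon,\tilde{s}(\alpha',\beta')-\tilde{s}(\alpha,\beta)\rangle_{n_3}$ and takes the union over the $\mathcal{K}^2$ pairs (which is where the $2\log\mathcal{K}$ comes from), rather than centering each $\nu_{n_3}(u)$ at $s$ as you do; and in the $(M2)$ case the paper controls the sup-norm simply via the event $\{|\varepsilon_i|\le 3\rho\log n_1\ \forall i\}$ rather than via leaf-wise averages, which immediately gives $\|\hat{s}_{M,T}\|_\infty\le R+3\rho\log n_1$ and the $4\rho R+12\rho^2\log n_1$ term. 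These choices make the constant tracking slightly more transparent, but your argument would go through with only cosmetic changes.
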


\begin{Rk}
If $\rho=0$, by integrating with respect to $\xi$,
we get for the two methods $(M1)$ and $(M2)$ that: \\
for any $\eta \in (0,1)$,
\begin{eqnarray*}
\mathbb{E}\left[ \left\| s - \tilde{\tilde{s}} \right\|_{n_3}^2 \ \big| \mathcal{L}_1, \ \mathcal{L}_2 \right] & \leq & \frac{1+\eta^{-1}-\eta}{\eta^2}
\underset{\tilde{s}(\alpha,\beta) \in \mathcal{G}}{\inf} \left\{  \mathbb{E}\left[ \left\| s - \tilde{s}(\alpha,\beta) \right\|_{n_3}^2 \ \big| \mathcal{L}_1, \ \mathcal{L}_2 \right] \right\} \\
& & +
 \frac{2}{\eta^2(1-\eta)} \frac{\sigma^2}{n_3} \left(2 \log \mathcal{K} +1\right).
\end{eqnarray*}

The conditional risk of the final estimator $\tilde{\tilde{s}}$
with respect to $\| \ \|_{n_3}$ is controlled by the minimum of the errors made by $ \tilde{s}(\alpha,\beta) $. Thus the test sample selection does not
alterate so much the accuracy of the final estimator.
Now we can conclude that theoretically our procedure is valid.
\end{Rk}

Unlike the classification framework, we are not able, even when $\rho=0$, to compare the final estimator $\tilde{\tilde{s}}$ with the entire collection of models since the different inequalities involve empirical norms that can not be compared.

\section{Simulations} \label{Simulation}

The aim of this section is twice. On the one hand, we illustrate by an example the theoretical procedure, described in the {\bf{Section \ref{Intro}}}.
On the other hand, we compare the results of the theoretical procedure with those obtained when we consider the procedure restricted to a family $\mathcal{P}^*$ constructed thanks to Breiman's Variable Importance.\\

The simulated example, also used by Breiman {\it{et al.}} (see \cite{Breiman} p. 237), is composed of $p=10$ explanatory variables $X^1, \ldots , X^{10}$ such that:
$$
\left\{
\begin{array}{l}
\ \ \mathbb{P}(X^1=-1)=\mathbb{P}(X^1=1)=\frac{1}{2} \\
\ \ \forall i \in \{2, \ldots , 10\}, \ \ \mathbb{P}(X^i=-1)=\mathbb{P}(X^i=0)=\mathbb{P}(X^i=1)=\frac{1}{3}
\end{array}
\right.
$$
and of the explained variable $Y$ given by:
\begin{eqnarray*}
Y=s(X^1, \ldots, X^{10})+\varepsilon =
\begin{cases}
\ \ 3+3X^2+2X^3+X^4+\varepsilon & {\text{ if   $ \quad X^1=1$ }},\\
\ \ -3+3X^5+2X^6+X^7+\varepsilon & {\text{ if   $ \quad X^1=-1$ }}.
\end{cases}
\end{eqnarray*}
where the unobservable random variable $\varepsilon$ is independent of $X^1, \ldots , X^{10}$ and normally distributed with mean $0$ and variance $2$.\\

The variables $X^8$, $X^9$ and $X^{10}$ do not appear in the definition of the explained variable $Y$, they can be considered as observable noise.\\

The {\bf{Table \ref{Tabi}}} contains the Breiman's Variable Importance.
The first row presents the explanatory variables ordered from the most influential to the less influential, whereas the second one contains the Breiman's Variable Importance Ranking.\\

\begin{table}[!h]
\begin{center}
\begin{tabular}{|c|cccccccccc|}
\hline
Variable & $X^1$ & $X^2$ & $X^5$ & $X^3$ & $X^6$ & $X^4$ & $X^7$ & $X^8$ & $X^9$ & $X^{10}$ \\
\hline
 Rank    &   1   &   2   &   3   &   5   &   4   &   7   &   6  &    8   &   9    &   10  \\
\hline
\end{tabular}
\caption{\label{Tabi} Variable Importance Ranking for the considered simulated example.}
\end{center}
\end{table}

We note that the Variable Importance Ranking is consistent with the simulated model since the two orders coincide. In fact, in the model, the variables $X^3$ and $X^6$ (respectively $X^4$ and $X^7$) have the same effect on the response variable $Y$.\\

To make in use our procedure, we consider a training sample $\mathcal{L}$ which consists of the realization of $1000$ independent copies of the pair of random variables $(X,Y)$ where $X=(X^1, \ldots , X^{10})$.\\

The first results are related to the behaviour of the set of variables associated with the estimator $\tilde{s}$.
More precisely, for given values of the parameters $\alpha$ and $\beta$ of the penalty function, we look at the selected set of variables.\\

According to the model definition and the Variable Importance Ranking, the expected results are the following ones:

\begin{itemize}
\item the size of the selected set should belong to $\{1,3,5,7,10\}$. As the variables $X^2$ and $X^5$ (respectively $X^3$ and $X^6$, $X^4$ and $X^7$ or $X^8$, $X^9$ and $X^{10}$) have the same effect on the response variable, the other sizes could not appear, theoretically;
\item the set of size $k$, $k \in \{1,3,5,7,10\}$, should contain the $k$ most important variables since Variable Importance Ranking and model definition coincide;
\item the final selected set should be $\{1,2,5,3,6,4,7\}$.
\end{itemize}

The behaviour of the set associated with the estimator $\tilde{s}$, when we apply the theoretical procedure, is summarized by the {\bf{Table \ref{Tab1}}}.\\
At the intersection of the row $\beta$ and the column $\alpha$ appears the set of variables associated with $\tilde{s}(\alpha, \beta)$.\\

\begin{table}[!h]
\begin{center}
\begin{tabular}{|c|c|c|c|c|c|c|}
\hline
\backslashbox{$\beta$}{$\alpha$} & $ \alpha \leq 0.05$ & $0.05 < \alpha \leq 0.1$ & $0.1 < \alpha \leq 2$ & $2 < \alpha \leq 12$ & $12 < \alpha \leq 60$ & $60 \leq \alpha$ \\
\hline
\multirow{2}{3cm}{\centerline{$\beta \leq 100$}} & \multirow{2}{2cm}{\centerline{$\{1,2,5,6,3,$} \centerline{$7,4,8,9,10\}$}} & \multirow{2}{2cm}{\centerline{$\{1,2,5,6,$} \centerline{$3,7,4\}$}} & \multirow{2}{2cm}{\centerline{$\{1,2,5,6,$} \centerline{$3,7,4\}$}} & \multirow{2}{2cm}{\centerline{$\{1,2,5,$} \centerline{$6,3\}$}} & \multirow{2}{2cm}{\centerline{$\{1,2,5\}$}} & \multirow{2}{1cm}{\centerline{$\{1\}$}} \\
 & & & & & & \\
\hline
\multirow{2}{3cm}{\centerline{$100 < \beta \leq 700$}} & \multirow{2}{2cm}{\centerline{$\{1,2,5,6,$} \centerline{$3,7,4\}$}} & \multirow{2}{2cm}{\centerline{$\{1,2,5,6,$} \centerline{$3,7,4\}$}} & \multirow{2}{2cm}{\centerline{$\{1,2,5,$} \centerline{$6,3\}$}} & \multirow{2}{2cm}{\centerline{$\{1,2,5,$} \centerline{$6,3\}$}} & \multirow{2}{2cm}{\centerline{$\{1,2,5\}$}} & \multirow{2}{1cm}{\centerline{$\{1\}$}} \\
 & & & & & & \\
 \hline
\multirow{2}{3cm}{\centerline{$700 < \beta \leq 1300$}} & \multirow{2}{2cm}{\centerline{$\{1,2,5,$} \centerline{$6,3\}$}} & \multirow{2}{2cm}{\centerline{$\{1,2,5,$} \centerline{$6,3\}$}} & \multirow{2}{2cm}{\centerline{$\{1,2,5,$} \centerline{$6,3\}$}} & \multirow{2}{2cm}{\centerline{$\{1,2,5,$} \centerline{$6,3\}$}} & \multirow{2}{2cm}{\centerline{$\{1,2,5\}$}} & \multirow{2}{1cm}{\centerline{$\{1\}$}} \\
 & & & & & & \\
\hline
\multirow{2}{3cm}{\centerline{$1300 < \beta \leq 1700$}} & \multirow{2}{2cm}{\centerline{$\{1,2,5\}$}} & \multirow{2}{2cm}{\centerline{$\{1,2,5\}$}} & \multirow{2}{2cm}{\centerline{$\{1,2,5\}$}} & \multirow{2}{2cm}{\centerline{$\{1,2,5\}$}} & \multirow{2}{2cm}{\centerline{$\{1\}$}} & \multirow{2}{1cm}{\centerline{$\{1\}$}} \\
 & & & & & & \\
\hline
\multirow{2}{3cm}{\centerline{$1900 <\beta$}} & \multirow{2}{2cm}{\centerline{$\{1\}$}} & \multirow{2}{2cm}{\centerline{$\{1\}$}} & \multirow{2}{2cm}{\centerline{$\{1\}$}} & \multirow{2}{2cm}{\centerline{$\{1\}$}} & \multirow{2}{2cm}{\centerline{$\{1\}$}} & \multirow{2}{1cm}{\centerline{$\{1\}$}} \\
 & & & & & & \\
\hline
\end{tabular}
\caption{\label{Tab1} In this table appears the set associated with the estimator $\tilde{s}$ for some values of the parameters $\alpha$ and $\beta$ which appear in the penalty function $pen$.}
\end{center}
\end{table}

First, we notice that those results are the expected ones.
Then, we see that for a fixed value of the parameter $\alpha$ (respectively $\beta$), the increasing of $\beta$ (resp. $\alpha$) results in the decreasing of the size of the selected set, as expected. Therefore, this decreasing is related to Breiman's Variable Importance since the explanatory variables disappear according to the Variable Importance Ranking (see {\bf{Table \ref{Tabi}}}).
As the expected final set $\{1,2,5,3,6,4,7\}$ appears in the {\bf{Table \ref{Tab1}}}, obviously, the final step of the procedure, whose results are given by the {\bf{Table \ref{Tab2}}}, returns the ``good'' set.\\

\begin{table}[!h]
\begin{center}
\begin{tabular}{|c|c|c|}
\hline
$\hat{\alpha}$ & $\hat{\beta}$ & selected set \\
\hline
0.3 & $\rightarrow$ 100 & $\{1,2,3,4,5,6,7\}$\\
\hline
\end{tabular}
\caption{\label{Tab2} In this table, we see the results of the final model selection.}
\end{center}
\end{table}

The {\bf{Table \ref{Tab2}}} provides some other informations.
At present, we do not know how to choose the parameters $\alpha$ and $\beta$ of the penalty function. This is the reason why the theoretical procedure includes a final selection by test sample. But, if we are able to determine, thanks to the data, the value of those parameters, this final step would disappear.
If we analyse the {\bf{Table \ref{Tab2}}}, we see that the ``best'' parameter $\hat{\alpha}$ takes only one value and that $\hat{\beta}$ belongs to a ``small'' range. So, those results lead to the conclusion that a data-driven determination of the parameters $\alpha$ and $\beta$ of the penalty function may be possible and that further investigations are needed.\\

As the theoretical procedure is validated on the simulated example, we consider now a more realistic procedure when the number of explanatory variables is large. It involves a smaller family $\mathcal{P}^*$ of sets of variables. To determine this family, we use an idea introduced by Poggi and Tuleau in (\cite{Poggi}) which associates Forward Selection and variable importance (VI) and whose principle is the following one.
The sets of $\mathcal{P}^*$ are constructed by invoking and testing the explanatory variables according to Breiman's Variable Importance ranking.
More precisely, the first set is composed of the most important variable according to VI.
To construct the second one, we consider the two most important variables
and we test if the addition of the second most important variable has a
significant incremental influence on the response variable.
If the influence is significant, the second set of $\mathcal{P}^*$ is composed
 of the two most importance variables.
If not, we drop the second most important variable and we consider the first
and the third most important variables and so on.
So, at each step, we add an explanatory variable to the preceding set which is less important than the preceding ones.\\

For the simulated example, the corresponding family $\mathcal{P}^*$ is:
$$
\mathcal{P}^*=\bigg\{ \{1\};\{1,2\};\{1,2,5\};\{1,2,5,6\};\{1,2,5,6,3\};\{1,2,5,6,3,7\};\{1,2,5,6,3,7,4\} \bigg\}
$$
In this family, the variables $X^8$, $X^9$ and $X^{10}$ do not appear. This is consistent with the model definition and Breiman's VI ranking.\\

The first advantage of this family $\mathcal{P}^*$ is that it involves, at the most $p$ sets of variables instead of $2^p$. The second one is that, if we perform our procedure restricted to the family $\mathcal{P}^*$, we obtain nearly the same results for the behavior of the set associated with $\tilde{s}$ than the one obtained with all the $2^p$ sets of variables (see {\bf{Table \ref{Tab1}}}). The only difference is that, since $\mathcal{P}^*$ does not contain the set of size $10$, in the {\bf{Table \ref{Tab1}}}, the set $\{1,2,3,4,5,6,7,8,9,10\}$ is replaced by $\{1,2,5,6,3,7,4\}$.

\section{Appendix} \label{Appendix}

This section presents some lemmas which are useful in the proofs of the propositions of the Sections \ref{Regression} and \ref{Classification}. The lemmas 1 to 4 are known results. We just give the statements and references for the proofs.
The lemma 5 is a variation of lemma 4.
The remaining lemmas are intermediate results which we prove to obtain both the propositions and their proofs.\\

The lemma \ref{Talagrand} is a concentration inequality due to Talagrand. This type of inequality allows to know how a random variable behaves around its expectation.

\begin{Lemma}[Talagrand]
\label{Talagrand}
Consider $n$ independent random variables $\xi_1,...,\xi_n$ with
values in some measurable space $\Theta$.
Let $\mathcal{F}$ be some countable family of real valued measurable functions on $\Theta$,
such that $\| f \|_{\infty} \leq b < \infty$ for every $f \in \mathcal{F}$. \\
Let
${ Z= \underset{f \in \mathcal{F}}{\sup} \left\vert
\sum_{i=1}^n \left( f(\xi_{i})-\mathbb{E}\left[ f(\xi_{i}) \right] \right)
\right\vert }$
and
${ \sigma^{2} = \underset{f \in \mathcal{F}}{\sup}
\left\{ \sum_{i=1}^n Var[f(\xi_{i})] \right\} }$ \\
Then, there exists $K_1$ and $K_2$ two universal constants such that
for any positive real number $x$,
$$ \mathbb{P}\left(Z \ge K_{1} \mathbb{E}[Z] + K_{2} \left(\sigma \sqrt {2x} +
bx\right)\right) \leq \exp (-x).$$
\end{Lemma}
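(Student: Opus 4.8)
This is the classical concentration inequality of Talagrand for the supremum of an empirical process, stated here in the form with non-optimal constants (the factor $K_1$ multiplying $\mathbb{E}[Z]$). Because it is invoked only as a known tool, the cleanest route is to cite it directly, as the authors do — it is established, for instance, in Massart's Saint-Flour notes (\cite{Flour}). If I were to reconstruct the argument, I would use the entropy (modified log-Sobolev) method: the plan is to control the log-Laplace transform $\psi(\lambda) = \log \mathbb{E}\left[ e^{\lambda (Z - \mathbb{E}[Z])} \right]$ of the centred variable, and then convert this into a tail estimate by Chernoff's inequality, optimising over $\lambda > 0$.

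First I would apply the entropy functional $\mathrm{Ent}(U) = \mathbb{E}[U \log U] - \mathbb{E}[U] \log \mathbb{E}[U]$ to $U = e^{\lambda Z}$, using that $Z = F(\xi_1, \dots, \xi_n)$ is a measurable function of the independent coordinates $\xi_i$. The key structural tool is the tensorization inequality for entropy, which bounds $\mathrm{Ent}(e^{\lambda Z})$ by the sum $\sum_{i=1}^n \mathbb{E}\left[ \mathrm{Ent}^{(i)}(e^{\lambda Z}) \right]$, where $\mathrm{Ent}^{(i)}$ denotes the entropy taken in the $i$-th coordinate with the others frozen. An elementary computation identifies this left-hand side with $e^{\phi(\lambda)}\left( \lambda \phi'(\lambda) - \phi(\lambda) \right)$, where $\phi(\lambda) = \log \mathbb{E}[e^{\lambda Z}]$, so that a bound on the conditional entropies becomes a differential inequality for $\phi$.

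Next I would bound each conditional entropy term. Writing $Z_i$ for the supremum recomputed after removing the $i$-th coordinate, the uniform bound $\|f\|_\infty \leq b$ gives a bounded-difference control on $Z - Z_i$, while the variance proxy $\sigma^2 = \sup_{f} \sum_i \mathrm{Var}[f(\xi_i)]$ controls $\sum_i \mathbb{E}\left[ (Z - Z_i)^2 \right]$ up to a term absorbed into $\mathbb{E}[Z]$. Feeding these into the tensorized bound yields a differential inequality of sub-gamma type, which upon integration from $0$ gives $\psi(\lambda) \leq \frac{v \lambda^2}{2(1 - c \lambda)}$ for suitable $v$ and $c$ built from $\sigma^2$, $b$ and $\mathbb{E}[Z]$.

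The main obstacle is exactly this control of the conditional entropy terms: turning the combinatorial structure of a supremum into a clean modified log-Sobolev (hence differential) inequality is the technical heart of Talagrand's result, and it is where both $\sigma^2$ and $b$ must be tracked carefully. Once $\psi$ is pinned down, the conclusion is routine: Markov's inequality applied to $e^{\lambda (Z - \mathbb{E}[Z])}$ and optimisation of $\lambda$ against the sub-gamma bound produce a tail of the form $\mathbb{P}\left( Z \geq \mathbb{E}[Z] + \sigma \sqrt{2x} + c' b x \right) \leq e^{-x}$, the $\sigma \sqrt{2x}$ and $b x$ terms reflecting the Gaussian and exponential regimes respectively. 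Allowing $K_1 > 1$ rather than $K_1 = 1$ is precisely what lets one absorb the lower-order fluctuation terms without the sharp-constant refinements of Bousquet or Klein--Rio.
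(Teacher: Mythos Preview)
Your proposal is correct: the paper itself does not prove this lemma but simply refers the reader to Massart (\cite{Toulouse}), so your strategy of citing the result as a known tool matches the paper exactly (you point to \cite{Flour}, another Massart reference used elsewhere in the paper). The entropy-method sketch you add --- tensorization of entropy, the resulting differential inequality for the log-Laplace transform, integration to a sub-gamma bound, and the Chernoff/optimisation step --- is a faithful outline of the standard proof and goes beyond what the paper provides.
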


\begin{proof}
see Massart (\cite{Toulouse}).
\end{proof}

The lemma \ref{Maximal} allows to pass from local maximal
inequalities to a global one.

\begin{Lemma}[Maximal inequality]
\label{Maximal}
Let $(\mathcal{S},d)$ be some countable set.\\
Let Z be some process indexed by $\mathcal{S}$ such that
$\underset{t \in B(u,\sigma)}{\sup} \vert Z(t)-Z(u) \vert$ has finite
expectation for any positive real $\sigma$,
with ${ B(u,\sigma)= \bigg\{t \in \mathcal{S} {\text{ such  that }} d(t,u) \le \sigma \bigg\}}$.\\
Then, for all $ \Phi : \mathbb{R} \rightarrow \mathbb{R}^{+} $ such that:
\begin{align}
-  \quad & x \rightarrow \frac {\Phi (x)}{x} {\text{ is non increasing, }}\nonumber \\
- \quad  & \forall \sigma \geq {\sigma}_{*} \quad  \mathbb{E}\left[
 \underset{t \in B(u,\sigma)}{\sup} \vert Z(t)-Z(u) \vert \right] \le \Phi(\sigma),\nonumber
\end{align}
\\
we have:
$$
\forall x \geq \sigma_* \hspace{0.5cm}
\mathbb{E}\left[ \underset{t \in \mathcal{S}}{\sup} \frac { \vert  Z(t)-Z(u)
\vert} {d^{2}(t,u)+x^{2}} \right] \le \frac {4}{x^{2}}\Phi (x).
$$
\end{Lemma}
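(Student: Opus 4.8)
The plan is to use a peeling (slicing) argument: decompose $\mathcal{S}$ into dyadic shells centred at $u$, control the supremum of the ratio on each shell through the two hypotheses on $\Phi$, and then sum a geometric series. Throughout I fix $x \geq \sigma_*$ and write $f(t) = \frac{|Z(t)-Z(u)|}{d^2(t,u)+x^2}$. Since $f \geq 0$ and $\mathcal{S}$ is countable, for any countable partition $\{C_k\}$ of $\mathcal{S}$ the pointwise bound $f(t) \leq \sum_{k} \sup_{C_k} f$ holds, hence $\mathbb{E}[\sup_{\mathcal{S}} f] \leq \sum_{k \geq 0} \mathbb{E}[\sup_{C_k} f]$; the finiteness hypothesis on $\mathbb{E}[\sup_{t \in B(u,\sigma)}|Z(t)-Z(u)|]$ is exactly what justifies exchanging these suprema with the expectation. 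I would take the inner ball $C_0 = B(u,x)$ together with the annuli $C_k = B(u,2^k x) \setminus B(u,2^{k-1}x)$ for $k \geq 1$.

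Next I would bound each shell. On $C_0$ one has $d^2(t,u)+x^2 \geq x^2$ and $C_0 = B(u,x)$, so by the second hypothesis applied with $\sigma = x \geq \sigma_*$, $\mathbb{E}[\sup_{C_0} f] \leq x^{-2}\, \mathbb{E}[\sup_{B(u,x)}|Z-Z(u)|] \leq \Phi(x)/x^2$. On an annulus $C_k$ with $k \geq 1$ every $t$ satisfies $d(t,u) > 2^{k-1}x$, whence $d^2(t,u)+x^2 > 2^{2k-2}x^2$, while $C_k \subseteq B(u,2^k x)$ with $2^k x \geq \sigma_*$; this gives $\mathbb{E}[\sup_{C_k} f] \leq \Phi(2^k x)/(2^{2k-2}x^2)$. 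For the first annulus $k=1$ I would instead use the sharper estimate $d^2(t,u)+x^2 > 2x^2$ (valid since $d(t,u) > x$), yielding $\mathbb{E}[\sup_{C_1} f] \leq \Phi(2x)/(2x^2)$.

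The decisive ingredient is the monotonicity of $x \mapsto \Phi(x)/x$: because $2^k x \geq x$, it forces the sublinear bound $\Phi(2^k x) \leq 2^k \Phi(x)$, which converts each shell estimate into a geometric term. Thus $\mathbb{E}[\sup_{C_1} f] \leq \Phi(x)/x^2$ and $\mathbb{E}[\sup_{C_k} f] \leq 4\Phi(x)/(2^k x^2)$ for $k \geq 2$. Summing, $\mathbb{E}[\sup_{\mathcal{S}} f] \leq \frac{\Phi(x)}{x^2}\bigl(1 + 1 + 4\sum_{k \geq 2} 2^{-k}\bigr) = \frac{4\Phi(x)}{x^2}$, which is the claimed inequality.

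The main obstacle here is not the structure of the argument, which is standard, but the bookkeeping of constants. Applying the crude denominator bound $d^2(t,u)+x^2 > 2^{2k-2}x^2$ uniformly over all $k \geq 1$ only yields the weaker constant $5$; recovering the stated constant $4$ requires the refinement $d^2(t,u)+x^2 > 2x^2$ on the single annulus $C_1$, where the crude bound loses a factor of two. Once this is handled, the remaining steps are routine geometric summation, and the only hypotheses actually used are the two properties of $\Phi$ together with the integrability assumption needed to move expectations past the suprema.
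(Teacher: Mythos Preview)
Your peeling argument is correct and is exactly the standard route taken in Massart and N\'ed\'elec (which the paper simply cites rather than reproducing). The dyadic decomposition into $B(u,x)$ and the annuli $B(u,2^k x)\setminus B(u,2^{k-1}x)$, together with the sublinear growth $\Phi(2^k x)\le 2^k\Phi(x)$, is precisely their Lemma~5.5; your extra care on the shell $k=1$ to recover the constant $4$ rather than $5$ is a nice touch and matches the sharp constant in the statement.
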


\begin{proof}
see Massart and N\'ed\'elec (\cite{Nedelec}), section: ``Appendix: Maximal inequalities'', lemma 5.5.
\end{proof}

Thanks to the lemma \ref{Boucheron}, we see that the Hold-Out is an adaptative selection procedure for classification.

\begin{Lemma}[Hold-Out] \label{Boucheron}
Assume that we observe $N+n$ independent random variables with common
distribution $P$ depending on some parameter $s$ to be estimated.
The first $N$ observations ${ X^{\prime}=(X_1^{\prime}, \ldots, X_N^{\prime}) }$ are used to build some preliminary collection of estimators $(\hat{s}_m)_{m \in \mathcal{M}}$
and we use the remaining observations $(X_1, \ldots, X_n)$ to select some estimator $\hat{s}_{\hat{m}}$ among the
collection defined before by minimizing the empirical contrast.\\
Suppose that $\mathcal{M}$ is finite with cardinal $K$.\\
If there exists a function $w$ such that:
\begin{align}
- \quad & w : \mathbb{R}^+ \rightarrow \mathbb{R}^+, \nonumber \\
- \quad & x \rightarrow  \frac {w (x)}{x} \quad \text{is non increasing,} \nonumber \\
- \quad & \forall \epsilon > 0, \quad \underset{l(s,t) \leq \epsilon^2}{\sup} Var_P \left( \gamma(t,.)- \gamma(s,.)\right) \leq w^2(\epsilon) \nonumber
\end{align}
Then, for all $ \theta \in (0,1)$, one has:
$$
\left(1-\theta \right) \mathbb{E}\left[ l(s,\hat{s}_{\hat{m}}) \vert X^{\prime} \right] \leq \left(1+\theta \right) \underset{m \in \mathcal{M}}{\inf}l(s,\hat{s}_m)+
\delta ^2_*\left(2 \theta + \left(1+ \log K \right)\left(\frac{1}{3}+\frac{1}{\theta}\right)\right)
$$
where $\delta^2_*$ satisfies to $\sqrt{n} \delta^2_* = w(\delta_*)$.
\end{Lemma}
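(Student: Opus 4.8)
The plan is to work conditionally on the first sample $X'$, so that every $\hat{s}_m$ is a fixed function and the selection of $\hat{m}$ depends only on the validation sample $(X_1,\dots,X_n)$. Write $\gamma_n(u)=\frac1n\sum_{i=1}^n\gamma(u,X_i)$ for the empirical contrast, $\bar\gamma_u=\gamma(u,\cdot)-\gamma(s,\cdot)$ for the centered contrast (so that $P\bar\gamma_u=l(s,u)$ by (\ref{l})), and introduce the centered empirical process $\nu_n(u)=(P-P_n)\bar\gamma_u$, with $P_n$ the empirical distribution of the validation sample. The starting point is the defining inequality $\gamma_n(\hat{s}_{\hat m})\le\gamma_n(\hat{s}_m)$ for every $m$, which rewrites as the basic inequality
$$ l(s,\hat{s}_{\hat m}) \le l(s,\hat{s}_m) + \nu_n(\hat{s}_{\hat m}) - \nu_n(\hat{s}_m), \qquad \forall\, m\in\mathcal{M}. $$
The whole proof amounts to controlling the two fluctuation terms: $\nu_n(\hat{s}_{\hat m})$ uniformly over the collection (since $\hat m$ is data-dependent) and $-\nu_n(\hat{s}_m)$ for the fixed minimiser of $l(s,\hat{s}_m)$.

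Next I would estimate these fluctuations by Bernstein's inequality. For a fixed $m$, the variables $\bar\gamma_{\hat{s}_m}(X_i)$ are i.i.d.\ conditionally on $X'$, bounded by $1$ since the contrast is the $0$--$1$ loss, with variance $\mathrm{Var}_P(\bar\gamma_{\hat{s}_m})\le w^2\!\big(\sqrt{l(s,\hat{s}_m)}\big)$ by the variance assumption taken with $\epsilon^2=l(s,\hat{s}_m)$. Bernstein then gives, for each $x>0$ and each fixed $m$, with probability at least $1-e^{-x}$,
$$ |\nu_n(\hat{s}_m)| \le \sqrt{\frac{2\,w^2(\sqrt{l(s,\hat{s}_m)})\,x}{n}} + \frac{x}{3n}. $$
A union bound over the $K$ models replaces $x$ by $x+\log K$ and yields a simultaneous control of all the $\nu_n(\hat{s}_m)$; this is the only place where the cardinality $K$ enters, and it produces the $\log K$ term.

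The heart of the argument is the sub-root balancing via the fixed point $\delta_*$. Using that $x\mapsto w(x)/x$ is non-increasing together with $\sqrt n\,\delta_*^2=w(\delta_*)$, the variance term is bounded by $\sqrt{2\,w^2(\cdot)\,x/n}\le \theta\,(l(s,\hat{s}_m)\vee\delta_*^2)+\tfrac1\theta\,\delta_*^2 x$ up to the stated numerical constant; combined with the Bernstein remainder $\tfrac{x}{3n}\le\tfrac13\delta_*^2 x$ (which uses $1/n\le\delta_*^2$, valid in the regime of interest where $\delta_*^2\asymp 1/(nh)$ under the margin condition), each fluctuation is at most $\theta\,l(s,\hat{s}_m)+\theta\delta_*^2+(\tfrac13+\tfrac1\theta)\delta_*^2(x+\log K)$. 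Plugging this into the basic inequality, the term $\theta\,l(s,\hat{s}_{\hat m})$ is absorbed into the left-hand side (producing the factor $1-\theta$), the term $\theta\,l(s,\hat{s}_m)$ joins the oracle term (producing $1+\theta$), the two floor contributions give $2\theta\delta_*^2$, and one is left with a bound valid with probability $\ge 1-e^{-x}$ for every $m$. Taking the infimum over $m$ and finally integrating the tail in $x$ (so that $\int_0^\infty\mathbb{P}(\cdots>x)\,dx$ turns the stray $x$ into the additive $1$) upgrades the high-probability statement into the announced bound in conditional expectation, with $(1+\log K)$ replacing $\log K$.

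The main obstacle is precisely this balancing step: one must treat both the regime $l(s,\hat{s}_m)\ge\delta_*^2$ (where $w(\sqrt{l})\le\sqrt n\,\delta_*\sqrt{l}$ follows directly from the monotonicity of $w(x)/x$) and the regime $l(s,\hat{s}_m)<\delta_*^2$ (where the floor $\delta_*^2$ is what rescues the argument), and to do so simultaneously for the random index $\hat m$ and for the fixed comparison model while keeping the numerical constants exactly as in the statement. The remaining ingredients — the basic inequality, the Bernstein bound, the union bound, and the tail integration — are routine once this calibration is in place.
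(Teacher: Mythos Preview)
The paper does not actually prove this lemma: its entire proof is a reference to Massart's Saint-Flour lecture notes. Your sketch reproduces the standard argument found there --- the basic inequality from the definition of $\hat m$, Bernstein's inequality combined with the variance hypothesis, a union bound over the $K$ candidates producing the $\log K$, the sub-root calibration via the fixed point $\delta_*$, and integration of the tail to pass from high probability to expectation --- so you are aligned with the intended proof.

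Two small points worth flagging. First, to apply Bernstein you assume the contrast is bounded by $1$; this is correct in the $0$--$1$ loss setting where the lemma is used in the paper, but the lemma as stated here omits any boundedness hypothesis on $\gamma$, so strictly speaking your argument proves the result under an extra (and necessary) assumption that the paper's statement forgot to record. Second, your handling of the Bernstein remainder $x/(3n)\le\tfrac13\delta_*^2 x$ relies on $1/n\le\delta_*^2$; you rightly note this holds in the margin setting $w(\epsilon)=\epsilon/\sqrt{h}$ with $h\le 1$, but it is not a consequence of the hypotheses as written. In Massart's original formulation this is handled by an explicit boundedness constant in the statement (so the remainder becomes $bx/(3n)$ and is absorbed differently); here the lemma has been specialised implicitly to the classification context, and your caveat is appropriate.
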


\begin{proof}
see (\cite{Flour}), Chapter: ``Statistical Learning'', Section: ``Advanced model selection problems''.
\end{proof}

The lemmas \ref{faux-chi2} and \ref{faux-chi2-2} are concentration inequalities for a sum of squared random variables whose Laplace transform are controlled.
The lemma \ref{faux-chi2} is due to Sauv\'e (\cite{Sauve}) and allows to generalize the model selection result of Birg\'e and Massart (\cite{Birge}) for histogram models without assuming the observations to be Gaussian.
In the first lemma, we consider only partitions $m$ of
$\left\{ 1, \ldots , n \right\}$ constructed from an initial partition $m_0$
(i.e. for any element $J$ of $m$, $J$ is the union of elements of $m_0$),
whereas in the second lemma we consider all partitions $m$ of
$\left\{ 1, \ldots , n \right\}$.

\begin{Lemma}
\label{faux-chi2}
Let $\varepsilon_1, \ldots , \varepsilon_n$ n independent and identically distributed random variables satisfying:
$$ \mathbb{E}[\varepsilon_i]=0 \ {\text{ and }} \
\text{for any } \lambda \in \left( -1/ \rho, 1/ \rho \right) \text{, }
\log \mathbb{E} \left[ e^{\lambda \varepsilon_i} \right] \leq
\frac{\sigma^2 \lambda^2}{2\left( 1-\rho | \lambda | \right)}
$$
Let $m_0$ a partition of $\left\{ 1, \ldots , n \right\}$ such that,
$\forall J \in m_0$, $|J| \geq N_{min}$. \\
We consider the collection $\mathcal{M}$ of all partitions of
$\left\{ 1, \ldots , n \right\}$ constructed from $m_0$ and the statistics
$$\chi_m^2 = \sum_{J \in m}
\frac{\left( \sum_{i \in J} \varepsilon_i \right)^2}{|J|}, \ m \in \mathcal{M}$$
Let $ \delta > 0$ and denote $ \Omega_{\delta} =
\left\{ \forall J \in m_0; \ \left| \sum_{i \in J} \varepsilon_i \right|
\leq \delta \sigma^2 |J| \right\}. $\\
Then for any $m \in \mathcal{M}$ and any $x>0,$
\begin{eqnarray*}
\mathbb{P} \left( \chi_m^2 \1_{\Omega_{\delta}} \geq
\sigma^2 |m| + 4 \sigma^2 (1+ \rho\delta)\sqrt{2|m|x} + 2 \sigma^2(1+ \rho\delta)x
\right) \leq e^{-x}
\end{eqnarray*}
and
\begin{eqnarray*}
\mathbb{P} \left( \Omega_{\delta}^c \right) \leq 2 \frac{n}{N_{min}}
\exp \left( \frac{-\delta^2 \sigma^2 N_{min}}{2(1+ \rho\delta)} \right).
\end{eqnarray*}
\end{Lemma}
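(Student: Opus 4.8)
The plan is to treat the two displayed inequalities separately: the bound on $\mathbb{P}(\Omega_\delta^c)$ is an auxiliary large-deviation estimate, while the bound on $\chi_m^2\1_{\Omega_\delta}$ is the genuine $\chi^2$-type concentration. For the second inequality I would write $\Omega_\delta^c = \bigcup_{J\in m_0}\{|V_J| > \delta\sigma^2|J|\}$ with $V_J = \sum_{i\in J}\varepsilon_i$, and control each event by a two-sided Bernstein inequality. Since by independence and the hypothesis $\log\mathbb{E}[e^{\lambda V_J}]\leq |J|\sigma^2\lambda^2/(2(1-\rho|\lambda|))$ for $|\lambda|<1/\rho$, the Cram\'er--Chernoff computation yields $\mathbb{P}(|V_J| > \delta\sigma^2|J|)\leq 2\exp(-\delta^2\sigma^2|J|/(2(1+\rho\delta)))$; using $|J|\geq N_{min}$ and that $m_0$ has at most $n/N_{min}$ blocks, a union bound gives exactly the claimed estimate. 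This part is routine.

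For the main inequality I would start from the Chernoff bound $\mathbb{P}(\chi_m^2\1_{\Omega_\delta}\geq s)\leq e^{-ts}\mathbb{E}[e^{t\chi_m^2}\1_{\Omega_\delta}]$ for $t>0$, and aim to bound the truncated Laplace transform by a $\chi^2$-type moment generating function. The key structural remark is that the blocks $J\in m$ are pairwise disjoint, so the variables $V_J$ are independent; moreover $\Omega_\delta$ factorizes as $\bigcap_{J\in m}\Omega_\delta^{(J)}$, where $\Omega_\delta^{(J)}$ only involves the $m_0$-sub-blocks of $J$ (each $J\in m$ being a union of elements of $m_0$). Hence $\mathbb{E}[e^{t\chi_m^2}\1_{\Omega_\delta}] = \prod_{J\in m}\mathbb{E}[e^{tV_J^2/|J|}\1_{\Omega_\delta^{(J)}}]$, and it suffices to bound each single-block factor.

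To handle the square $V_J^2$ I would linearize it with the Gaussian identity $e^{tV_J^2/|J|} = \mathbb{E}_u[\exp(\sqrt{2t/|J|}\,u\,V_J)]$, $u\sim N(0,1)$, and use Fubini to exchange the $u$- and $\varepsilon$-expectations. Writing $V_J = \sum_{J_0}V_{J_0}$ over the independent $m_0$-sub-blocks $J_0\subset J$ (with truncations $|V_{J_0}|\leq\delta\sigma^2|J_0|$ imposed on $\Omega_\delta^{(J)}$), the inner $\varepsilon$-expectation factorizes into truncated linear Laplace transforms $\mathbb{E}[\1_{\{|V_{J_0}|\leq\delta\sigma^2|J_0|\}}e^{\mu V_{J_0}}]$ with $\mu = \sqrt{2t/|J|}\,u$. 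For $|\mu|$ small the unconstrained Bernstein bound already gives a sub-Gaussian factor with the true variance $\sigma^2$; for $|\mu|$ large the truncation forces $e^{\mu V_{J_0}}\leq e^{|\mu|\delta\sigma^2|J_0|}$, which can again be absorbed into a sub-Gaussian factor at the price of inflating the scale by a factor $(1+\rho\delta)$. Recombining the sub-blocks and integrating back over $u$ reconstitutes a factor of the form $(1-2\sigma^2(1+\rho\delta)t)^{-1/2}$, so that $\mathbb{E}[e^{t\chi_m^2}\1_{\Omega_\delta}]$ is controlled by a $\chi^2$ Laplace transform centred at $\sigma^2|m|$ with a Bernstein scale of order $\sigma^2(1+\rho\delta)$. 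Optimizing the Chernoff bound over $t$ (equivalently, inverting this Laplace bound by the standard chi-square tail computation) then yields the announced tail $\sigma^2|m| + 4\sigma^2(1+\rho\delta)\sqrt{2|m|x} + 2\sigma^2(1+\rho\delta)x$.

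The main obstacle is precisely the reconciliation of the Bernstein hypothesis, which only controls $\mathbb{E}[e^{\lambda\varepsilon_i}]$ for $|\lambda|<1/\rho$, with the Gaussian linearization, whose auxiliary variable $u$ ranges over all of $\mathbb{R}$ and thus drives $\mu = \sqrt{2t/|J|}\,u$ outside the admissible range. The truncation event $\Omega_\delta$ is engineered exactly to tame this large-$\mu$ regime, and the delicate bookkeeping is to arrange the case analysis so that the excess variance is confined to the scale term (the $(1+\rho\delta)$ factor) while the leading order stays equal to the true variance $\sigma^2$; this is what keeps the centring of the tail at $\sigma^2|m|$ rather than at the inflated $\sigma^2(1+\rho\delta)|m|$.
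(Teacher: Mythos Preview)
Your treatment of $\mathbb{P}(\Omega_\delta^c)$ matches the paper's exactly: Bernstein on each $V_J=\sum_{i\in J}\varepsilon_i$, then a union bound over the at most $n/N_{min}$ blocks of $m_0$.

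For the main inequality, however, the paper takes a much shorter route than your Gaussian linearization. It simply introduces the truncated block statistics
\[
Z_J \;=\; \frac{V_J^2}{|J|}\ \wedge\ \delta^2\sigma^4|J|,\qquad J\in m,
\]
observes that the $(Z_J)_{J\in m}$ are independent (disjoint blocks), computes their moments, and applies the Bernstein inequality directly to $\sum_{J\in m}Z_J$. Since on $\Omega_\delta$ one has $|V_J|\le \sum_{J_0\subset J}|V_{J_0}|\le \delta\sigma^2|J|$ for every $J\in m$, the truncated sum coincides with $\chi_m^2$ there, and the tail bound transfers immediately to $\chi_m^2\1_{\Omega_\delta}$.

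The payoff of this route is that the separation you worry about at the end of your sketch---keeping the centring at $\sigma^2|m|$ while the $(1+\rho\delta)$ inflation enters only in the fluctuation terms---comes for free: the mean $\mathbb{E}[Z_J]\le\mathbb{E}[V_J^2]/|J|\le\sigma^2$ gives the centring, while the higher moments of $Z_J$ (controlled via the Laplace-transform hypothesis together with the deterministic cap $Z_J\le\delta^2\sigma^4|J|$) supply the variance and scale parameters carrying the extra $(1+\rho\delta)$ factor. Your linearization scheme, by contrast, naturally produces a single effective variance $\sigma^2(1+\rho\delta)$ in the factor $(1-2\sigma^2(1+\rho\delta)t)^{-1/2}$, which would centre the tail at $\sigma^2(1+\rho\delta)|m|$; getting back down to $\sigma^2|m|$ requires exactly the ``delicate bookkeeping'' you mention but do not carry out, and it is not obvious from your sketch how you would do it. The moment-based Bernstein argument sidesteps this difficulty entirely.
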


\begin{proof}
\noindent Let $m \in \mathcal{M}$ and denote, for any $J \in m$,
$$
Z_J=\frac{(\sum_{i \in J}\varepsilon_i)^2}{|J|} \wedge (\delta^2 \sigma^4 |J|)
$$

\noindent $(Z_J)_{J \in m}$ are independent random variables. After calculating their moments, we deduce from Bernstein inequality that, for any $x>0$,
$$
\mathbb{P}\left( \sum_{J \in m} Z_J \geq \sigma^2|m|+4\sigma^2(1+b\delta)\sqrt{2|m|x}+2\sigma^2(1+b\delta)x\right) \leq e^{-x}
$$

\noindent
As $\sum_{J\in m} Z_J=\chi_m^2$ on the set $\Omega_{\delta}$, we get that for any $x>0$,
$$
\mathbb{P}\left( \chi_m^2 \1_{\Omega_{\delta}} \geq \sigma^2|m|+4\sigma^2(1+b\delta)\sqrt{2|m|x}+2\sigma^2(1+b\delta)x\right) \leq e^{-x}
$$

\noindent Thanks to the assumption on the Laplace transform of the $\varepsilon_i$, we have for any $J\in m_0$
$$
\mathbb{P}\left(\Big| \sum_{i \in J} \varepsilon_i \Big| \geq \delta \sigma^2 |J| \right) \leq 2exp\left(\frac{-\delta^2 \sigma^2 |J|}{2(1+b\delta)}\right)
$$

\noindent As $|J| \geq N_{min}$, we obtain
$$
\mathbb{P}(\Omega_{\delta}^c) \leq 2\frac{n}{N_{min}}exp(\frac{-\delta^2 \sigma^2 N_{min}}{2(1+b\delta)}
$$
\end{proof}

\begin{Lemma} \label{faux-chi2-2}
Let $\varepsilon_1, \ldots , \varepsilon_n$ n independent and identically distributed random variables satisfying:
$$ \mathbb{E}[\varepsilon_i]=0 \ {\text{ and }} \
\text{for any } \lambda \in \left( -1/ \rho, 1/ \rho \right) \text{, }
\log \mathbb{E} \left[ e^{\lambda \varepsilon_i} \right] \leq
\frac{\sigma^2 \lambda^2}{2\left( 1-\rho | \lambda | \right)}
$$
We consider the collection $\mathcal{M}$ of all partitions of
$\left\{ 1, \ldots , n \right\}$ and the statistics
$$\chi_m^2 = \sum_{J \in m}
\frac{\left( \sum_{i \in J} \varepsilon_i \right)^2}{|J|}, \ m \in \mathcal{M}$$
Let $ \delta > 0$ and denote $ \Omega_{\delta} =
\left\{ \forall 1 \leq i \leq n; \ \left|  \varepsilon_i \right|
\leq \delta \sigma^2 \right\}. $\\
Then for any $m \in \mathcal{M}$ and any $x>0$,
\begin{eqnarray*}
\mathbb{P} \left( \chi_m^2 \1_{\Omega_{\delta}} \geq
\sigma^2 |m| + 4 \sigma^2 (1+ \rho\delta)\sqrt{2|m|x} + 2 \sigma^2(1+ \rho\delta)x
\right) \leq e^{-x}
\end{eqnarray*}
and
\begin{eqnarray*}
\mathbb{P} \left( \Omega_{\delta}^c \right) \leq 2 n
\exp \left( \frac{-\delta^2 \sigma^2 }{2(1+ \rho\delta)} \right).
\end{eqnarray*}
\end{Lemma}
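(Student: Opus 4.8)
The plan is to mirror the proof of Lemma \ref{faux-chi2} almost verbatim, since Lemma \ref{faux-chi2-2} is presented as a variation of it. The only substantive change is that the event $\Omega_\delta$ now controls each individual increment $\varepsilon_i$ rather than the block sums over a fixed initial partition $m_0$. This stronger per-coordinate control is exactly what lets us drop the restriction to partitions refined from $m_0$ and treat \emph{all} partitions of $\{1,\ldots,n\}$ at once.

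First I would fix an arbitrary $m \in \mathcal{M}$ and, for each block $J \in m$, introduce the truncated variable
$$ Z_J = \frac{\left( \sum_{i \in J} \varepsilon_i \right)^2}{|J|} \wedge \left( \delta^2 \sigma^4 |J| \right). $$
Because the blocks of a partition are disjoint and the $\varepsilon_i$ are independent, the family $(Z_J)_{J \in m}$ consists of independent, bounded random variables. As in Lemma \ref{faux-chi2}, I would compute the first two moments of each $Z_J$ from the assumed bound on the Laplace transform of the $\varepsilon_i$ and feed these estimates into Bernstein's inequality applied to $\sum_{J \in m} Z_J$, obtaining for every $x>0$
$$ \mathbb{P}\left( \sum_{J \in m} Z_J \geq \sigma^2 |m| + 4 \sigma^2 (1+\rho\delta)\sqrt{2|m|x} + 2\sigma^2(1+\rho\delta)x \right) \leq e^{-x}. $$
This moment-plus-Bernstein computation is the technical core and the only genuinely laborious step, but it is identical to the one already carried out for Lemma \ref{faux-chi2} and does not use anything about the block sizes, so it applies to an arbitrary partition $m$.

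The key new observation is that the truncation is inactive on $\Omega_\delta$ for \emph{every} partition. Indeed, on $\Omega_\delta$ we have $|\varepsilon_i| \leq \delta\sigma^2$ for all $i$, hence for any block $J$ of any partition $m$,
$$ \left| \sum_{i \in J} \varepsilon_i \right| \leq \sum_{i \in J} |\varepsilon_i| \leq \delta\sigma^2 |J|, $$
so that $\frac{(\sum_{i \in J}\varepsilon_i)^2}{|J|} \leq \delta^2\sigma^4|J|$ and therefore $Z_J = \frac{(\sum_{i \in J}\varepsilon_i)^2}{|J|}$. Summing over $J \in m$ gives $\chi_m^2 = \sum_{J \in m} Z_J$ on $\Omega_\delta$, whence $\chi_m^2 \1_{\Omega_\delta} \leq \sum_{J \in m} Z_J$ and the Bernstein bound above transfers directly to yield the first announced inequality. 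Unlike in Lemma \ref{faux-chi2}, no telescoping or Cauchy--Schwarz over sub-blocks of $m_0$ is needed: the per-coordinate bound propagates trivially to every conceivable block, which is precisely why the refinement hypothesis on $m_0$ disappears.

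Finally, for $\mathbb{P}(\Omega_\delta^c)$ I would apply a Chernoff argument to each $\varepsilon_i$ separately. Optimizing the assumed bound $\log\mathbb{E}[e^{\lambda\varepsilon_i}] \leq \frac{\sigma^2\lambda^2}{2(1-\rho|\lambda|)}$ over $\lambda$ gives the Bernstein-type tail
$$ \mathbb{P}\left( |\varepsilon_i| \geq \delta\sigma^2 \right) \leq 2\exp\left( \frac{-\delta^2\sigma^2}{2(1+\rho\delta)} \right), $$
and a union bound over the $n$ indices yields $\mathbb{P}(\Omega_\delta^c) \leq 2n\exp\left( \frac{-\delta^2\sigma^2}{2(1+\rho\delta)} \right)$. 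Compared with Lemma \ref{faux-chi2}, the factor $n/N_{min}$ is replaced by $n$ and $N_{min}$ vanishes from the exponent, simply because the union is now taken over the $n$ singletons rather than over the blocks of $m_0$. I expect no real obstacle beyond faithfully reproducing the moment bounds of the previous lemma; the entire conceptual content lies in the remark that the individual-coordinate truncation is strong enough to control all partitions simultaneously.
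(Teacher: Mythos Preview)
Your proposal is correct and follows essentially the same approach as the paper: the authors simply state that the proof is identical to that of Lemma~\ref{faux-chi2}, with the only differences being that $\Omega_\delta$ is the smaller per-coordinate event and that $N_{min}=1$. Your write-up spells out precisely these two points and nothing more is needed.
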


\begin{proof}
The proof is exactly the same as the preceding one. The only difference is that the set $\Omega_{\delta}$ is smaller and $N_{min} = 1$.
\end{proof}

The lemmas \ref{lemme1-poids} and \ref{lemme2-poids} give the expression
of the weights needed in the model selection procedure.

\begin{Lemma} \label{lemme1-poids}
The weights $x_{M,T} =  a|T| + b|M|\left(1 + \log\left(\frac{p}{|M|}\right)\right) $, with $a > 2 \log 2$ and $b > 1$ two absolute constants, satisfy
\begin{eqnarray} \label{cond1-poids}
\sum_{M \in \mathcal{P}(\Lambda)} \sum_{T \preceq T_{max}^{(M)}} e^{-x_{M,T}} \leq \Sigma(a,b)
\end{eqnarray}
with $\Sigma(a,b) = -\log\left( 1-e^{-(a-2 \log 2)} \right) \frac{e^{-(b-1)}}{1-e^{-(b-1)}} \in \mathbb{R}_+^* $.
\end{Lemma}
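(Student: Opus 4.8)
The plan is to exploit the additive structure of the weight. Since $x_{M,T} = a|T| + b|M|\left(1+\log(p/|M|)\right)$ splits into a term depending only on $T$ and a term depending only on $M$, the exponential factorizes and the double sum rearranges as
\begin{equation*}
\sum_{M \in \mathcal{P}(\Lambda)} \sum_{T \preceq T_{max}^{(M)}} e^{-x_{M,T}} = \sum_{M \in \mathcal{P}(\Lambda)} e^{-b|M|\left(1+\log(p/|M|)\right)} \left( \sum_{T \preceq T_{max}^{(M)}} e^{-a|T|} \right).
\end{equation*}
I would then bound the inner tree-sum uniformly in $M$ and the outer variable-sum separately, and multiply the two estimates.

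For the inner sum I would group the pruned subtrees $T \preceq T_{max}^{(M)}$ according to their number of leaves $|T| = k$. Since splits are binary, a pruned subtree with $k$ leaves is a full binary tree with $k$ leaves, so the number of such subtrees of the fixed tree $T_{max}^{(M)}$ is at most the Catalan number $C_{k-1} = \frac{1}{k}\binom{2k-2}{k-1}$. The elementary estimate $\binom{2k-2}{k-1} \leq 4^{k-1} \leq 4^{k}$ gives $C_{k-1} \leq 4^k/k$, whence
\begin{equation*}
\sum_{T \preceq T_{max}^{(M)}} e^{-a|T|} \leq \sum_{k \geq 1} \frac{4^k}{k} e^{-ak} = \sum_{k \geq 1} \frac{\left(e^{-(a - 2\log 2)}\right)^k}{k} = -\log\left(1 - e^{-(a-2\log 2)}\right),
\end{equation*}
the series converging precisely because $a > 2\log 2$ forces $e^{-(a-2\log 2)} < 1$. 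This bound is independent of $M$, so it factors out of the outer sum.

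For the outer sum I would group the subsets by cardinality $|M| = m$, of which there are $\binom{p}{m}$. The standard estimate $\binom{p}{m} \leq (ep/m)^m = e^{m\left(1+\log(p/m)\right)}$ yields, for each $m$,
\begin{equation*}
\binom{p}{m} e^{-bm\left(1+\log(p/m)\right)} \leq e^{-(b-1)m\left(1+\log(p/m)\right)} \leq e^{-(b-1)m},
\end{equation*}
the last step using $1 + \log(p/m) \geq 1$ for $m \leq p$ together with $b > 1$. Summing the resulting geometric series over $m \geq 1$ gives $\sum_{M} e^{-b|M|\left(1+\log(p/|M|)\right)} \leq e^{-(b-1)}/\left(1 - e^{-(b-1)}\right)$, again with convergence guaranteed by $b > 1$. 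Multiplying this by the uniform tree bound reproduces exactly $\Sigma(a,b)$.

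The only genuinely nontrivial ingredient is the combinatorial count of pruned subtrees by the Catalan number; the binomial estimate and the two series (logarithmic for the trees, geometric for the subsets) are routine. A minor point to settle is the treatment of $M = \emptyset$, for which $\log(p/|M|)$ is undefined, which I would simply exclude from the collection, consistent with the convention that a selected model must involve at least one variable.
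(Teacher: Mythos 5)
Your proposal is correct and follows essentially the same route as the paper's proof: grouping subsets by cardinality with $\binom{p}{k}\leq (ep/k)^k$, counting pruned subtrees with $D$ leaves by the Catalan bound $\frac{1}{D}\binom{2(D-1)}{D-1}\leq 4^D/D$, and summing the logarithmic series in $a$ and the geometric series in $b$ to obtain exactly $\Sigma(a,b)$. The only difference is cosmetic — you factorize the exponential at the outset, whereas the paper carries the double sum along and factors it at the end — and your remark about excluding $M=\emptyset$ matches the paper's implicit convention of summing over $k\geq 1$.
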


\begin{proof}
\noindent We are looking for weights $x_{M,T}$ such that the sum
$$\Sigma (\mathcal{L}_1) = \sum_{M \in \mathcal{P}(\Lambda)} \sum_{T \preceq T_{max}^{(M)}} e^{-x_{M,T}} $$
is lower than an absolute constant.

\noindent Taking $x$ as a function of the number of variables $|M|$ and of the number of leaves $|T|$, we have
\begin{eqnarray*}
\Sigma (\mathcal{L}_1) & = &  \sum_{k=1}^{p} \sum_{\underset{|M|=k}{M \in \mathcal{P}(\Lambda)}} \sum_{D=1}^{n_1} \left| \left\{ T \preceq T_{max}^{(M)}; \ |T|=D \right\} \right| \ e^{-x(k,D)}.
\end{eqnarray*}

\noindent Since
$$\left| \left\{ T \preceq T_{max}^{(M)}; \ |T|=D \right\} \right| \leq \frac{1}{D}
\begin{pmatrix}
2(D-1) \\
D-1
\end{pmatrix}
\leq \frac{2^{2D}}{D}
\ ,
$$

\noindent we get
\begin{eqnarray*}
\Sigma (\mathcal{L}_1)  & \leq &  \sum_{k=1}^{p} \left(\frac{ep}{k}\right)^k \sum_{D \geq 1} \frac{1}{D} e^{-\left( x(k,D) - (2 \log 2)D \right)}.
\end{eqnarray*}

\noindent Taking $  x(k,D)= aD + bk\left(1 + \log\left(\frac{p}{k}\right)\right) $ with $a > 2 \log 2$ and $b > 1$ two absolute constants, we have
\begin{eqnarray*}
\Sigma (\mathcal{L}_1) \leq
\left( \sum_{k \geq 1}  e^{-(b - 1)k} \right) \left( \sum_{D \geq 1} \frac{1}{D} e^{-\left( a - (2 \log 2) \right)D} \right) = \Sigma(a,b).
\end{eqnarray*}

\noindent Thus the weights $x_{M,T} = a |T| + b|M| \left(1 + \log\left(\frac{p}{|M|}\right)\right) $, with $a > 2 \log 2$ and $b > 1$ two absolute constants,
satisfy (\ref{cond1-poids}).
\end{proof}

\begin{Lemma} \label{lemme2-poids}
The weights $$ x_{M,T} = \left( a + (|M|+1)\left(1+ \log\left(\frac{n_1}{|M|+1}\right) \right) \right) |T| + b \left(1 + \log\left(\frac{p}{|M|}\right)\right)|M| $$
with $a > 0$ and $b > 1$ two absolute constants, satisfy
\begin{eqnarray} \label{cond2-poids}
\sum_{M \in \mathcal{P}(\Lambda)} \sum_{T \in \mathcal{M}_{n_1,M}} e^{-x_{M,T}} \leq \Sigma^{'}(a,b)
\end{eqnarray}
with $ \Sigma^{'}(a,b) = \frac{e^{-a}}{1-e^{-a}} \frac{e^{-(b-1)}}{1-e^{-(b-1)} }  $ and $\mathcal{M}_{n_1,M}$ the set of trees built on the grid $\{ X_i; \ (X_i,Y_i) \in \mathcal{L}_1 \}$ with splits on the variables in $M$.
\end{Lemma}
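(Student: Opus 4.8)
The plan is to mimic closely the proof of Lemma \ref{lemme1-poids}, the only genuinely new ingredient being the count of the trees in $\mathcal{M}_{n_1,M}$ rather than of the pruned subtrees of $T_{max}^{(M)}$. First I would reorganize the double sum according to the number of variables $k=|M|$ and the number of leaves $D=|T|$:
$$ \sum_{M \in \mathcal{P}(\Lambda)} \sum_{T \in \mathcal{M}_{n_1,M}} e^{-x_{M,T}} = \sum_{k=1}^{p} \binom{p}{k} \sum_{D \geq 1} \left| \left\{ T \in \mathcal{M}_{n_1,M}; \ |T|=D \right\} \right| e^{-x(k,D)}, $$
where $x(k,D) = \left( a + (k+1)\left(1+\log\left(\frac{n_1}{k+1}\right)\right)\right) D + b\left(1+\log\left(\frac{p}{k}\right)\right) k$ and $\binom{p}{k} \leq \left(\frac{ep}{k}\right)^k$ as in the previous lemma.

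Next I would bound the number of trees with $D$ leaves. A tree of $\mathcal{M}_{n_1,M}$ with $D$ leaves has $D-1$ internal nodes; each internal node carries a split determined by one of the $k$ variables of $M$ and by one of at most $n_1$ threshold positions on the grid, while the number of binary tree shapes with $D$ leaves is the Catalan number $\frac{1}{D}\binom{2(D-1)}{D-1} \leq 2^{2(D-1)}$. Hence
$$ \left| \left\{ T \in \mathcal{M}_{n_1,M}; \ |T|=D \right\} \right| \leq (4 k n_1)^{D-1}. $$
This is where the factor $(k+1)(1+\log(n_1/(k+1)))$ enters: since $e^{-(k+1)(1+\log(n_1/(k+1)))} = \left(\frac{k+1}{e n_1}\right)^{k+1}$, the product $4 k n_1 \, e^{-(k+1)(1+\log(n_1/(k+1)))}$ is $\leq 1$ as soon as $4k(k+1)^2 \leq e^2 n_1$. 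Under the assumption $p \leq \log n_1$ of Proposition \ref{propM2} one has $k \leq p \leq \log n_1$, so this holds for $n_1$ large enough.

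Consequently, for each fixed $k$, $(4kn_1)^{D-1} e^{-(k+1)(1+\log(n_1/(k+1)))D} \leq 1$ for every $D \geq 1$, and the inner sum over $D$ collapses into a geometric series:
$$ \sum_{D \geq 1} \left| \left\{ T \in \mathcal{M}_{n_1,M}; \ |T|=D \right\} \right| e^{-x(k,D)} \leq e^{-b(1+\log(p/k))k}\sum_{D \geq 1} e^{-aD} = e^{-b(1+\log(p/k))k}\,\frac{e^{-a}}{1-e^{-a}}. $$
It then remains to sum over $k$. Exactly as in Lemma \ref{lemme1-poids}, $\binom{p}{k} e^{-b(1+\log(p/k))k} \leq e^{-(b-1)k(1+\log(p/k))} \leq e^{-(b-1)k}$, so $\sum_{k \geq 1} \binom{p}{k} e^{-b(1+\log(p/k))k} \leq \frac{e^{-(b-1)}}{1-e^{-(b-1)}}$. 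Multiplying the two factors yields precisely $\Sigma'(a,b)$.

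The main obstacle is the tree count together with the verification that the enlarged complexity term $(|M|+1)(1+\log(n_1/(|M|+1)))$ per leaf really absorbs it; this is exactly the point where $\mathcal{M}_{n_1,M}$ (all grid trees, with up to $k n_1$ admissible splits per internal node) forces a heavier penalty than the $a > 2\log 2$ that sufficed in Lemma \ref{lemme1-poids}, where only the $\leq 2^{2D}$ subtrees of a single fixed maximal tree had to be counted. Everything else is routine and parallels the previous lemma.
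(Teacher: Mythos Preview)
Your overall architecture matches the paper's: regroup by $k=|M|$ and $D=|T|$, bound the number of trees with $D$ leaves, then reduce to two geometric series. The divergence is in the tree count, and that is where your argument does not quite deliver the lemma as stated.

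The paper does not count trees via Catalan numbers times split choices. It invokes the fact that the class $\mathcal{S}p_M$ of admissible axis-aligned splits on variables in $M$ has Vapnik--Chervonenkis dimension $|M|+1$, and then applies a Sauer-type counting result (Lemma~2 of Gey--N\'ed\'elec) to get
\[
\left|\{T \in \mathcal{M}_{n_1,M};\ |T|=D\}\right| \;\le\; \left(\frac{e\,n_1}{k+1}\right)^{D(k+1)}.
\]
This bound is tailored to cancel \emph{exactly} with the weight term: since $e^{-(k+1)(1+\log(n_1/(k+1)))D} = \left(\frac{k+1}{e\,n_1}\right)^{D(k+1)}$, the product of the tree count and this factor is identically $1$, and the inner sum becomes $\sum_{D\ge 1} e^{-aD}$ with no leftover and no side conditions on $p$ or $n_1$.

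Your cruder bound $(4kn_1)^{D-1}$ does not cancel exactly, and this forces you to import the hypothesis $p\le \log n_1$ (and ``$n_1$ large enough'') from Proposition~\ref{propM2}. But Lemma~\ref{lemme2-poids} is stated and used without that restriction, so strictly speaking you have proved a weaker statement. There is also an algebra slip: the condition for $4kn_1\,e^{-(k+1)(1+\log(n_1/(k+1)))}\le 1$ is $4k(k+1)^{k+1}\le e^{k+1}n_1^{k}$, not $4k(k+1)^2\le e^2 n_1$; what you wrote is only the case $k=1$. The correct inequality does hold for $k\le\log n_1$ and $n_1$ large, so your route can be repaired under that extra assumption, but it still falls short of the unconditional lemma. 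Replacing your Catalan/split count by the VC-based bound above removes both issues at once.
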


\begin{proof}
\noindent We are looking for weights $x_{M,T}$ such that the sum
$$
\sum(\{X_i; \ (X_i,Y_i) \in \mathcal{L}_1\})=\sum_{M \in \mathcal{P}(\Lambda)} \sum_{T \in \mathcal{M}_{n_1,M}} e^{-x_{M,T}}
$$
\noindent is lower than an absolute constant.

\noindent Taking $x$ as a function of the number of variables $|M|$ and the number of leaves $|T|$, we have
$$
\sum(\{X_i; \ (X_i,Y_i) \in \mathcal{L}_1\})=\sum_{k=1}^p\sum_{M \in \mathcal{P}(\Lambda) |M|=k} \sum_{D=1}^{n_1} |\{T \in \mathcal{M}_{n_1,M}; \ |T|=D\}|e^{-x_{k,D}}
$$

\noindent Since the Vapnik-Chervonenkis dimension of $\mathcal{S}p_M$ (the class of admissible splits which involves only the variables of $M$) is $|M|+1$, it follows from the lemma 2 in (\cite{Nedelec2}) that
$$
|\{T \in \mathcal{M}_{n_1,M}; \ |T|=D \}| \leq \left( \frac{n_1 e}{|M|+1} \right)^{D(|M|+1)}
$$

\noindent We get
\begin{eqnarray}
\sum (\{X_i; \ (X_i,Y_i) \in \mathcal{L}_1\}) & \leq & \sum_{k=1}^p  \binom{k}{p} \sum_{D \geq 1}e^{D[(k+1)(1+log(\frac{n_1}{k+1}))]-x(k,D)} \\
& \leq & \sum_{k=1}^p  \left(\frac{ep}{k}\right)^k \sum_{D \geq 1}e^{D[(k+1)(1+log(\frac{n_1}{k+1}))]-x(k,D)}
\end{eqnarray}

\noindent Taking $x(k,D)=D[a+(k+1)\left(1+log\left(\frac{n_1}{k+1}\right)\right)]+\alpha(k)$ with $a>0$ an absolute constant, we have
$$
\sum(\{X_i; \ (X_i,Y_i) \in \mathcal{L}_1\})  \leq  \left( \sum_{k=1}^p e^{-\left(\alpha(k)-k\left(1+log\left(\frac{p}{k}\right)\right)\right)}\right)\left(\sum_{D \geq 1}e^{-aD}\right)$$

\noindent Thus taking $x(k,D)=D[a+(k+1)\left(1+log\left(\frac{n_1}{k+1}\right)\right)]+bk\left(1+log\left(\frac{p}{k}\right)\right)$ with $a>0$ and $b>1$ two absolute constants, we have
$$
\sum(\{X_i; \ (X_i,Y_i) \in \mathcal{L}_1\})  \leq  \left( \sum_{k \geq 1}e^{-(b-1)k} \right)\left( \sum_{D \geq 1}e^{-aD}\right)=\Sigma^{\prime}(a,b)
$$
\noindent Thus the weights $x(M,T)=|T|[a+(|M|+1)\left(1+log\left(\frac{n_1}{|M|+1}\right)\right)]+b|M|\left(1+log\left(\frac{p}{|M|}\right)\right)$ with $a>0$ and $b>1$ two absolute constants, satisfy (\ref{cond2-poids}).
\end{proof}

The two last lemmas provide controls in expectation for processes studied in classification.

\begin{Lemma} \label{Nedelec}
Let $(X_1,Y_1),...,(X_n,Y_n)$ be $n$ independent observations taking their values
in some measurable space ${\Theta \times \{0,1\}}$, with common distribution $P$.
We denote $d$ the $L^2(\mu)$ distance where $\mu$ is the marginal distribution of $X_i$.\\
Let
$S_T$ the set of piecewise constant functions defined on the partition $\tilde{T}$ associated to the leaves of the tree $T$.\\
Let suppose that:
$$\exists h>0, \ \forall x \in \Theta, \ \vert 2\eta(x)-1 \vert \geq h \quad {\text{ with }} \quad \eta(x)=\mathbb{P}(Y=1\vert X=x)$$
Then:
\begin{enumerate}
\item[(i)]$\underset{u \in S_T, \ l(s,u) \leq \varepsilon^2}{\sup}
  d(s,u) \leq w(\varepsilon)$ with $w(x)=\frac{1}{\sqrt{h}}x$,
\item[(ii)]$\exists \phi_T : \mathbb{R}^+ \rightarrow \mathbb{R}^+$ such that:
\begin{enumerate}
\item[$\bullet$] $\phi_T(0)=0$,
\item[$\bullet$] $x \rightarrow \frac{\phi_T(x)}{x}$ is non increasing,
\item[$\bullet$] $\forall \sigma \geq w(\sigma_T), \ \ \sqrt{n}
  \mathbb{E}\left[ \underset{u \in S_T, \ d(u,v) \leq \sigma}{\sup}
  \vert \bar{\gamma}_n(u)-\bar{\gamma}_n(v) \vert \right] \leq
  \phi_T(\sigma)$,
\end{enumerate}
with $\sigma_T$ the positive solution of $\phi_T(w(x))=\sqrt{n}x^2$.
\item[(iii)] $\sigma_T^2 \leq \frac{K_3^2 \vert T \vert}{nh}$.
\end{enumerate}
\end{Lemma}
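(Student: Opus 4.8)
The plan is to treat the three assertions in the order (i), (ii), (iii): item (i) is an immediate consequence of the margin condition, item (ii) is the genuine analytic work, and item (iii) then follows by solving the defining fixed-point equation once (ii) supplies an explicit $\phi_T$. Throughout, recall that both the Bayes classifier $s$ and every $u \in S_T$ take their values in $\{0,1\}$.

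For (i), I would use that $\{0,1\}$-valued functions satisfy $(s-u)^2 = |s-u| = \1_{s \neq u}$ pointwise, so that $d^2(s,u) = \mathbb{E}\left[|s(X)-u(X)|\right]$. Since the classification loss is $l(s,u) = \mathbb{E}\left[|s(X)-u(X)|\,|2\eta(X)-1|\right]$, the margin condition $|2\eta(x)-1| \geq h$ bounds the integrand below by $h\,|s(X)-u(X)|$, whence $l(s,u) \geq h\,d^2(s,u)$. Therefore $l(s,u) \leq \varepsilon^2$ forces $d^2(s,u) \leq \varepsilon^2/h$, i.e. $d(s,u) \leq \varepsilon/\sqrt{h} = w(\varepsilon)$, which is exactly (i).

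For (ii), I would control the expected modulus of continuity of the recentered empirical contrast directly. Writing $f_{u,v} = \gamma(u,\cdot)-\gamma(v,\cdot)$, the increment $\bar{\gamma}_n(u)-\bar{\gamma}_n(v)$ is the centered empirical process applied to $f_{u,v}$; since $u,v \in \{0,1\}$ one has $|f_{u,v}| = \1_{u \neq v}$, so $\mathrm{Var}(f_{u,v}) \leq \mathbb{E}\left[\1_{u(X) \neq v(X)}\right] = d^2(u,v) \leq \sigma^2$ on the ball $\{d(u,v) \leq \sigma\}$. The functions $f_{u,v}$ are piecewise constant on the partition $\tilde{T}$, hence the indexing class is finite (at most $2^{|T|}$ classifiers) and decomposes cell by cell. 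Combining symmetrization with this finite-dimensional structure and the variance bound $\sigma^2$, I expect $\sqrt{n}\,\mathbb{E}\left[\sup_{d(u,v)\leq\sigma}|\bar{\gamma}_n(u)-\bar{\gamma}_n(v)|\right] \leq \phi_T(\sigma)$ with $\phi_T(\sigma) = K\sigma\sqrt{|T|}$; one then checks $\phi_T(0)=0$ and that $\phi_T(x)/x = K\sqrt{|T|}$ is (weakly) non increasing, as required. This step is the main obstacle: getting the sharp $\sigma\sqrt{|T|}$ dependence rather than a looser bound carrying an extra logarithmic factor requires exploiting that the partition is fixed once $T$ is given, and summing the per-cell fluctuations (of order $\sigma_j/\sqrt{n}$ on cell $j$) under the global constraint $\sum_j \sigma_j^2 \leq \sigma^2$ via Cauchy--Schwarz.

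Finally, for (iii), I would insert the explicit $\phi_T$ and $w$ into the defining equation $\phi_T(w(\sigma_T)) = \sqrt{n}\,\sigma_T^2$. Since $w(x) = x/\sqrt{h}$ and $\phi_T$ is linear, $\phi_T(w(\sigma_T)) = K\sqrt{|T|}\,\sigma_T/\sqrt{h}$, so the equation reads $K\sqrt{|T|}\,\sigma_T/\sqrt{h} = \sqrt{n}\,\sigma_T^2$, whose positive solution is $\sigma_T = K\sqrt{|T|/(nh)}$. Hence $\sigma_T^2 \leq K_3^2\,|T|/(nh)$ with $K_3 = K$, which is (iii); should the bound of (ii) carry an unavoidable logarithmic factor, it is simply absorbed into the constant $K_3$.
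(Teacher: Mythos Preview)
Your proof is correct and, for part~(i), identical to the paper's argument via the loss formula $l(s,u)=\mathbb{E}[|s(X)-u(X)|\,|2\eta(X)-1|]$. For parts~(ii) and~(iii) the paper does not actually give a proof: it cites Massart and N\'ed\'elec for the existence of $\phi_T$ and merely asserts that their Sauer-lemma bound on $\sigma_T^2$ can be sharpened by ``adapting to the structure of $S_T$''. Your cell-by-cell decomposition with Cauchy--Schwarz is precisely that adaptation: writing the increment as $\sum_{j\in A}\epsilon_j Z_j$ over the leaves where $u\neq v$, bounding $\sum_{j\in A}|Z_j|\le \sigma\sqrt{\sum_j Z_j^2/\mu_j}$, and using $\mathbb{E}[Z_j^2]\le \mu_j/n$ gives $\phi_T(\sigma)=K\sigma\sqrt{|T|}$ without the logarithmic factor that a VC/Sauer argument would produce. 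Solving $\phi_T(w(\sigma_T))=\sqrt{n}\,\sigma_T^2$ then yields (iii) exactly as you wrote. In short, you have supplied the details the paper leaves implicit.
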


\begin{proof}
The first point $(i)$ is easy to obtain from the following expression of $l$:
$$ l(s,u) = \mathbb{E} \left( \left| s(X)-u(X) \right| \left| 2\eta(X)-1 \right| \right)$$
The existence of the function $\phi_T$ has been proved by Massart and N\'ed\'elec (\cite{Nedelec}). They also give an upper bound of $\sigma_T^2$ based on Sauer's lemma. The upper bound of $\sigma_T^2$ is better than the one of (\cite{Nedelec}) because it has been adapted to the structure of $S_T$.
\end{proof}

Thanks to lemma (\ref{Nedelec}) and (\ref{Maximal}), we deduce the next one.

\begin{Lemma} \label{combi}
Let $(X_1,Y_1),...,(X_n,Y_n)$ a sample taking its values
in some measurable space ${\Theta \times \{0,1\}}$, with common
distribution $P$. Let $T$ a tree, $S_T$ the space associated, $h$ the
margin and $K_3$ the universal constant which appear in the lemme
\ref{Nedelec}.
If $2x \geq \frac{K_3 \sqrt{ \vert T \vert } }{ \sqrt{n}h } $, then:
$$ \mathbb{E} \left[ \underset{u \in S_T}{\sup}
\frac{ \vert \bar{\gamma}_n(u)-\bar{\gamma}_n(v) \vert }{ d^2(u,v)+(2x)^2 }
\right]
\leq \frac{2K_3 \sqrt{ \vert T \vert } }{x \sqrt{n} }$$
\end{Lemma}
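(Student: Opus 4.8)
The plan is to combine the two preceding lemmas by feeding the control of Lemma \ref{Nedelec}(ii) into the maximal inequality of Lemma \ref{Maximal}. I would take the centered empirical process $Z(u) = \bar{\gamma}_n(u)$ indexed by $S_T$, equip $S_T$ with the $L^2(\mu)$ distance $d$, and fix $v$ as the reference point of Lemma \ref{Maximal}. The natural candidate for the function $\Phi$ is $\Phi(\sigma) = \phi_T(\sigma)/\sqrt{n}$: indeed, Lemma \ref{Nedelec}(ii) guarantees both that $\sigma \mapsto \Phi(\sigma)/\sigma$ is non increasing (since $\phi_T(x)/x$ is) and that $\mathbb{E}\left[\sup_{u \in B(v,\sigma)} |Z(u) - Z(v)|\right] \le \Phi(\sigma)$ for every $\sigma \ge w(\sigma_T)$. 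Hence Lemma \ref{Maximal} applies with threshold $\sigma_* = w(\sigma_T)$.

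First I would check that the hypothesis $2x \ge K_3 \sqrt{|T|}/(\sqrt{n}\,h)$ places $2x$ above the threshold $\sigma_* = w(\sigma_T)$. Since $w(\cdot) = \cdot/\sqrt{h}$ and, by Lemma \ref{Nedelec}(iii), $\sigma_T \le K_3 \sqrt{|T|}/\sqrt{nh}$, one obtains $w(\sigma_T) = \sigma_T/\sqrt{h} \le K_3 \sqrt{|T|}/(\sqrt{n}\,h) \le 2x$. Applying Lemma \ref{Maximal} at the point $2x$ then yields
$$ \mathbb{E}\left[ \sup_{u \in S_T} \frac{|Z(u) - Z(v)|}{d^2(u,v) + (2x)^2} \right] \le \frac{4}{(2x)^2}\,\Phi(2x) = \frac{\phi_T(2x)}{x^2 \sqrt{n}}. $$

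It remains to control $\phi_T(2x)$, which is the only genuinely computational step. Using that $z \mapsto \phi_T(z)/z$ is non increasing together with $2x \ge w(\sigma_T)$, I would write $\phi_T(2x)/(2x) \le \phi_T(w(\sigma_T))/w(\sigma_T)$. The defining relation $\phi_T(w(\sigma_T)) = \sqrt{n}\,\sigma_T^2$ and the identity $w(\sigma_T) = \sigma_T/\sqrt{h}$ turn the right-hand side into $\sqrt{n}\,\sigma_T\sqrt{h}$, and bound (iii), $\sigma_T \le K_3\sqrt{|T|}/\sqrt{nh}$, finally gives $\sqrt{n}\,\sigma_T\sqrt{h} \le K_3 \sqrt{|T|}$. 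Thus $\phi_T(2x) \le 2 K_3 \sqrt{|T|}\,x$, and substituting into the previous display produces exactly the claimed bound $\frac{2 K_3 \sqrt{|T|}}{x \sqrt{n}}$. The whole argument is therefore a matter of correctly identifying $\Phi$ and $\sigma_*$; the main (and rather modest) obstacle is the bookkeeping that converts the implicit definition of $\sigma_T$ and its explicit bound (iii) into the estimate $\phi_T(2x) \le 2K_3\sqrt{|T|}\,x$.
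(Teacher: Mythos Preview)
Your proposal is correct and follows exactly the route the paper indicates: combine Lemma~\ref{Nedelec} with the maximal inequality of Lemma~\ref{Maximal}, taking $\Phi=\phi_T/\sqrt{n}$ and $\sigma_*=w(\sigma_T)$. The threshold check and the bookkeeping that turns $\phi_T(2x)\le 2K_3\sqrt{|T|}\,x$ via the defining relation of $\sigma_T$ and bound~(iii) are precisely what is needed, and you have carried them out accurately.
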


\section{Proofs} \label{Proofs}

\subsection{Classification}

\subsubsection{Proof of the proposition \ref{propc1}: }

\noindent Let M $\in \mathcal{P}(\Lambda)$, $ T \preceq T_{max}^{(M)}$
and $s_{M,T} \in S_{M,T}$.
\noindent We let
\begin{align}
  & \bullet w_{M^{\prime},T^{\prime}}(u)  =
  (d(s,s_{M,T})+d(s,u))^{2}+y_{M^{\prime},T^{\prime}}^{2} \nonumber \\
 &  \bullet V_{M^{\prime},T^{\prime}}  =  \underset{u \in
  S_{M^{\prime},T^{\prime}}}{\sup} \frac { \vert
  \bar{\gamma_{n_2}}(u)-\bar{\gamma_{n_2}}(s_{M,T})
  \vert}{w_{M^{\prime},T^{\prime}}(u)} \nonumber
 \end{align}
\noindent where $y_{M^{\prime},T^{\prime}}$ is a parameter that will be chosen later.

\noindent Following the proof of theorem 4.2 in (\cite{Toulouse}), we get
\begin{eqnarray}
 l(s,\tilde{s})  \leq   l(s,s_{M,T}) +  w_{\widehat{M,T}}(\tilde{s})
 \times V_{\widehat{M,T}}+pen(M,T) -  pen( \widehat{M,T}) \label{L}
\end{eqnarray}
\noindent To control $ V_{\widehat{M,T}}$, we check a uniform overestimation of
$V_{M^{\prime},T^{\prime}}$. To do this, we apply the Talagrand's
concentration inequality, written in lemma \ref{Talagrand}, to
$V_{M^{\prime},T^{\prime}}$. So we obtain that for any
$(M^{\prime},T^{\prime})$, and for any $x>0$
$$
\mathbb{P} \left ( V_{M^{\prime},T^{\prime}} \geq K_1 \mathbb{E}
\left[ V_{M^{\prime},T^{\prime}} \right]+K_2 \left(
\sqrt{\frac{x}{2n_2}}y_{M^{\prime},T^{\prime}}
^{-1}+\frac{x}{n_2}y_{M^{\prime},T^{\prime}} ^{-2}\ \right)  \right)
\leq e^{-x}
$$
\noindent where $K_1$ and $K_2$ are universal positive constants.

\noindent Setting $x=x_{M^{\prime},T^{\prime}}+\xi $, with $\xi >0$
and the weights {$x_{M^{\prime},T^{\prime}}=a \vert T^{\prime} \vert +b
\vert M^{\prime} \vert \left ( 1+ \log \left ( \frac{p}{\vert
  M^{\prime \vert}} \right) \right)$}, as defined in lemma \ref{lemme1-poids}, and
summing all those inequalities with respect to
$(M^{\prime},T^{\prime})$, we derive a set $\Omega_{\xi,(M,T)}$ such
that:
\begin{itemize}
 \item [$\bullet$]  $\mathbb{P} \bigg({\Omega}_{\xi,(M,T)}^c \vert
   \mathcal{L}_1 {\text{ and }} \{X_i,\ (X_i,Y_i) \in \mathcal{L}_2\}\bigg)
\leq e^{-\xi} \Sigma(a,b)$
 \item[$\bullet$] on $\Omega_{\xi,(M,T)}$, $\forall
 (M^{\prime},T^{\prime})$,
\begin{equation}
\hspace*{1cm} V_{M^{\prime},T^{\prime}} \leq K_{1}
\mathbb{E}\left[V_{M^{\prime},T^{\prime}} \right] +
K_{2}\left(\sqrt{\frac{x_{M^{\prime},T^{\prime}}+\xi}{2n_2}}{y_{M^{\prime},T^{\prime}}}^{-1}+\frac{x_{M^{\prime},T^{\prime}}+\xi}{n_2}{y_{M^{\prime},T^{\prime}}}^{-2}\right)
\label{V}
\end{equation}
\item[]
\end{itemize}
\noindent  Now we overestimate $\mathbb{E}\left[ V_{M^{\prime},T^{\prime}}
\right] $.\\
\noindent Let $u_{M^{\prime},T^{\prime}} \in
S_{M^{\prime},T^{\prime}}$  such that
$d(s,u_{M^{\prime},T{\prime}}) \leq \underset{u \in
  S_{M^{\prime},T^{\prime}}}{\inf} d(s,u)$.\\
\noindent Then
$$
\mathbb{E}\left[ V_{M^{\prime},T^{\prime}} \right] \leq
\mathbb{E}\left[ \frac{\vert
    \bar{\gamma_{n_2}}(u_{M^{\prime},T^{\prime}})-\bar{\gamma_{n_2}}(s_{M,T}) \vert }{\underset{u \in S_{M^{\prime},T^{\prime}}}{\inf} (w_{M^{\prime},T^{\prime}}(u))}\right] + \mathbb{E}\left[ \underset{u \in S_{M^{\prime},T^{\prime}}}{\sup}\left(  \frac {\vert \bar{\gamma_{n_2}}(u)-\bar{\gamma_{n_2}}(u_{M^{\prime},T^{\prime}}) \vert}{w_{M^{\prime},T^{\prime}}(u)}\right)\right]
$$
\noindent We prove:
$$
\mathbb{E}\left[ \frac{\vert
    \bar{\gamma_{n_2}}(u_{M^{\prime},T^{\prime}})-\bar{\gamma_{n_2}}(s_{M,T}) \vert } {\underset{u \in S_{M^{\prime},T^{\prime}}}{\inf} (w_{M^{\prime},T^{\prime}}(u))}\right] \leq \frac{1}{\sqrt{n_2}y_{M^{\prime},T^{\prime}}}
$$
\noindent For the second term, we have
$$
\mathbb{E}\left[ \underset{u \in
    S_{M^{\prime},T^{\prime}}}{\sup}\left(\frac {\vert
    \bar{\gamma_{n_2}}(u)-\bar{\gamma_{n_2}}(u_{M^{\prime},T^{\prime}}) \vert}{w_{M^{\prime},T^{\prime}}(u)}\right)\right] \leq 4 \mathbb{E}\left[ \underset{u \in
    S_{M^{\prime},T^{\prime}}}{\sup}\left(\frac {\vert
    \bar{\gamma_{n_2}}(u)-\bar{\gamma_{n_2}}(u_{M^{\prime},T^{\prime}}) \vert}{d^2(u,u_{M^{\prime},T^{\prime}})+(2y_{M^{\prime},T^{\prime}})^2}\right)\right]
$$

\noindent By application of the lemma {\ref{combi}} for $2y_{M^{\prime},T^{\prime}} \geq \frac{K_3 \sqrt{\vert T^{\prime}\vert }}{\sqrt{n_2}h}$, we deduce
$$
\mathbb{E}\left[ \underset{u \in
    S_{M^{\prime},T^{\prime}}}{\sup}\left(\frac {\vert
    \bar{\gamma_{n_2}}(u)-\bar{\gamma_{n_2}}(u_{M^{\prime},T^{\prime}}) \vert}{w_{M^{\prime},T^{\prime}}(u)}\right)\right] \leq \frac{8K_3 \sqrt{\vert T^{\prime} \vert } }{\sqrt{n_2}y_{M^{\prime},T^{\prime}}}
$$
\noindent Thus from (\ref{V}), we know that on ${\Omega}_{\xi,(M,T)}$ and $\forall (M^{\prime},T^{\prime})$
$$
 V_{M^{\prime},T^{\prime}} \leq
 \frac{K_{1}}{\sqrt{n_2}y_{M^{\prime},T^{\prime}}}\left(8K_3
 \sqrt{\vert T^{\prime} \vert}
+1\right)+
K_{2}\left(\sqrt{\frac{x_{M^{\prime},T^{\prime}}+\xi
 }{2n_2}}{y_{M^{\prime},T^{\prime}}}^{-1}+
 \frac{x_{M^{\prime},T^{\prime}}+\xi}{n_2}{y_{M^{\prime},T^{\prime}}}^{-2}\right)
$$
\noindent For $y_{M^{\prime},T^{\prime}}=3K \left( \frac{K_{1}}{\sqrt{n_2}}\left(8K_3
\sqrt{\vert T^{\prime}
  \vert}+1\right)+K_{2}\sqrt{\frac{x_{M^{\prime},T^{\prime}}+\xi
  }{2n_2}}+ \frac{1}{\sqrt{3K}}\sqrt{K_2
  \frac{x_{M^{\prime},T^{\prime}}+\xi}{n_2}}\right)$ \\ with $K \geq \frac{1}{48K_1h}$, we get:
$$
V_{M^{\prime},T^{\prime}}  \leq  \frac {1}{K}
$$

\noindent By overestimating $w_{\widehat{M,T}}(\tilde{s})$,
$y_{\widehat{M,T}}^2$ and replacing  all of those results in
(\ref{L}), we get
\begin{eqnarray*}
\left (1-\frac{2}{Kh}\right)l\left(s,\tilde{s}\right) & \leq &
\left(1+\frac{2}{Kh}\right)
l\left(s,s_{M,T}\right)-pen(\widehat{M,T})+pen(M,T) \\
& & +18K\left(\frac{64{K_{1}}^{2}{K_{3}}^{2}}{n_2}\vert \hat{T} \vert
+2K_2\frac{x_{\widehat{M,T}}}{n_2}\left(\sqrt{\frac{K_2}{2}}+\frac{1}{\sqrt{3K}}\right)^2\right)\\
& &
+18K\left(\frac{2K_1^2}{n_2}+2K_2\frac{\xi}{n_2}\left(\sqrt{\frac{K_2}{2}}+\frac{1}{\sqrt{3K}}\right)^2\right)
\end{eqnarray*}

\noindent We let $ K=\frac{2}{h} \frac{C_1+1}{C_1-1}$ with $C_1 >1$.\\
\noindent Taking a penalty $pen(\widehat{M,T})$ which balances all the
terms in $\left(\widehat{M,T}\right)$, i.e.
$$
pen(M,T) \geq
\frac{36(C_1+1)}{h(C_1-1)}\left(\frac{64K_1^2K_3^2}{n_2}\vert T \vert
+
2K_2\frac{x_{M,T}}{n_2}\left(\sqrt{\frac{K_2}{2}}+\sqrt{\frac{C_1-1}{6(C_1+1)}}\right)^2\right)
$$

\noindent We obtain that on $\Omega_{\xi,(M,T)}$
$$
l(s,\tilde{s}) \leq C_1 \bigg( l(s,s_{M,T})+pen(M,T) \bigg) + \frac{C}{n_2h}\xi
$$
\noindent Integrating with respect to $\xi$ and by minimizing , we get
$$
\mathbb{E} \bigg[ l(s,\tilde{s}) \vert \mathcal{L}_1 \bigg]
\leq C_1\ \underset{M,T}{\inf} \bigg\{ l(s,S_{M,T}) + pen(M,T) \bigg\}
+ \frac{C}{n_2h} \Sigma(a,b) $$

\noindent In brief, with a penalty function such that\\
\noindent $
\forall M \in \mathcal{P}(\Lambda), \ \ \forall T \preceq T_{max}^{(M)}
$
\begin{eqnarray*}
pen(M,T) & = & \alpha \frac{\vert T \vert}{n_2h}+ \beta \frac{\vert M \vert}{n_2h}\left(1+log\left(\frac{p}{\vert M \vert} \right) \right) \\
& \geq & \frac{36(C_1+1)}{C_1-1}\left(64K_1^2K_3^2+2aK_2\left( \sqrt{\frac{K_2}{2}}+\sqrt{\frac{h(C_1-1)}{6(C_1+1)}}\right)^2\right)\frac{\vert T \vert}{n_2h}\\
& & +\frac{36(C_1+1)}{C_1-1} 2K_2 \left(\sqrt{\frac{K_2}{2}}+\sqrt{\frac{h(C_1-1)}{6(C_1+1)}}\right)^2b \frac{\vert M \vert}{n_2h}\left(1+log\left(\frac{p}{\vert M \vert}\right)\right)
\end{eqnarray*}
\noindent we have:
$$
\mathbb{E} \bigg[ l(s,\tilde{s}) \vert \mathcal{L}_1 \bigg]
\leq C_1\ \underset{M,T}{\inf} \bigg\{ l(s,S_{M,T}) + pen(M,T) \bigg\}
+ \frac{C}{n_2h} \Sigma(a,b) $$

\noindent We notice that, the two constants $\alpha_0$ and $\beta_0$,
which appear in the proposition \ref{propc1}, are defined by
$$\alpha_0 = 36\left(64K_1^2K_3^2+4 \log 2 K_2 \left(
\sqrt{\frac{K_2}{2}}+\frac{1}{\sqrt{6}} \right)^2\right)  \ \ {\text{and}} \ \ \beta_0 = 72 K_2 \left(\sqrt{\frac{K_2}{2}}+\frac{1}{\sqrt{6}} \right)^2 $$
\hfill $\Box$

\subsubsection{Proof of the proposition \ref{propc2}: }

\noindent For $M$, $M^{\prime}$  $\in \mathcal{P}(\Lambda)$, $ T \preceq T_{max}^{(M)}$, $ T^{\prime} \preceq T_{max}^{(M^{\prime})}$
and $s_{M,T} \in S_{M,T}$.
We let
\begin{align}
  & \bullet w_{(M^{\prime},T^{\prime}),(M,T)}(u)  =
  (d(s,s_{M,T})+d(s,u))^{2}+(y_{M^{\prime},T^{\prime}}+y_{M,T})^{2} \nonumber \\
 &  \bullet V_{(M^{\prime},T^{\prime}),(M,T)}  =  \underset{u \in
  S_{M^{\prime},T^{\prime}}}{\sup} \frac { \vert
  \bar{\gamma_{n_2}}(u)-\bar{\gamma_{n_2}}(s_{M,T})
  \vert}{w_{(M^{\prime},T^{\prime}),(M,T)}(u)} \nonumber
 \end{align}
\noindent where $y_{M^{\prime},T^{\prime}}$ and $y_{M,T}$ are parameters that will be chosen later.

\noindent Following the proof of theorem 4.2 in (\cite{Toulouse}), we get
\begin{eqnarray}
 l(s,\tilde{s})  \leq   l(s,s_{M,T}) +  w_{(\widehat{M,T}),(M,T)}(\tilde{s})
 \times V_{(\widehat{M,T}),(M,T)}+pen(M,T) -  pen( \widehat{M,T}) \label{L1}
\end{eqnarray}
\noindent To control $ V_{(\widehat{M,T}),(M,T)}$, we check a uniform overestimation of
$V_{(M^{\prime},T^{\prime}),(M,T)}$. To do this, we apply the Talagrand's
concentration inequality, written in lemma \ref{Talagrand}, to
$V_{(M^{\prime},T^{\prime}),(M,T)}$, for $(M^{\prime},T^{\prime}) \in \mathcal{P}(\Lambda) \times \mathcal{M}_{n_1,M^{\prime}}$ and $(M,T) \in \mathcal{P}(\Lambda) \times \mathcal{M}_{n_1,M}$ . So we obtain that for any
$(M^{\prime},M) \in \mathcal{P}(\Lambda)^2$, any $T^{\prime} \in  \mathcal{M}_{n_1,M^{\prime}}$, any $M \in \mathcal{M}_{n_1,M}$ and any $x>0$,
$$
\mathbb{P} \left ( V_{(M^{\prime},T^{\prime}),(M,T)} \geq K_1 \mathbb{E}
\left[ V_{(M^{\prime},T^{\prime}),(M,T)} \right]+K_2 \left(
\sqrt{\frac{x}{2n_2}}y_{M^{\prime},T^{\prime}}
^{-1}+\frac{x}{n_2}y_{M^{\prime},T^{\prime}} ^{-2}\ \right)  \right)
\leq e^{-x}
$$
\noindent where $K_1$ and $K_2$ are universal positive constants.

\noindent Setting $x=x_{M^{\prime},T^{\prime}}+x_{M,T}+\xi $, with $\xi >0$
and the weights {$x_{M^{\prime},T^{\prime}}=\left(a +(\vert M^{\prime} \vert +1)\left(1+log\left(\frac{n_1}{\vert M^{\prime} \vert +1}\right)\right)\right)\vert T^{\prime} \vert +b
\vert M^{\prime} \vert \left ( 1+ \log \left ( \frac{p}{\vert
  M^{\prime} \vert} \right) \right)$}, as defined in lemma \ref{lemme2-poids}, and
summing all those inequalities with respect to
$(M^{\prime},T^{\prime})$ and $(M,T)$, we derive a set $\Omega_{\xi}$ such
that:
\begin{itemize}
 \item [$\bullet$]  $\mathbb{P} \bigg({\Omega}_{\xi}^c \vert
   \{X_i,\ (X_i,Y_i) \in \mathcal{L}_1\}\bigg)
\leq e^{-\xi} (\Sigma(a,b))^2$
 \item[$\bullet$] on $\Omega_{\xi}$, $\forall
 (M^{\prime},T^{\prime})$, $(M,T)$
\begin{eqnarray}
\hspace*{1cm} V_{(M^{\prime},T^{\prime}),(M,T)} & \leq & K_{1}
\mathbb{E}\left[V_{(M^{\prime},T^{\prime}),(M,T)} \right] +
K_{2}\left(\sqrt{\frac{x_{M^{\prime},T^{\prime}}+x_{M,T}+\xi}{2n_1}}(y_{M^{\prime},T^{\prime}}+y_{M,T})^{-1} \right) \nonumber \\
& & +K_{2}\left( \frac{x_{M^{\prime},T^{\prime}}+x_{M,T}+\xi}{n_1}(y_{M^{\prime},T^{\prime}}+y_{M,T})^{-2}\right)
\label{V1}
\end{eqnarray}
\item[]
\end{itemize}

\noindent Now we overestimate $\mathbb{E}\left[ V_{(M^{\prime},T^{\prime}),(M,T)}
\right] $.\\
\noindent Let $u_{M^{\prime},T^{\prime}} \in
S_{M^{\prime},T^{\prime}}$  such that
$d(s,u_{M^{\prime},T{\prime}}) \leq \underset{u \in
  S_{M^{\prime},T^{\prime}}}{\inf} d(s,u)$.\\
\noindent Then
$$
\mathbb{E}\left[ V_{(M^{\prime},T^{\prime}),(M,T)} \right] \leq
\mathbb{E}\left[ \frac{\vert
    \bar{\gamma_{n_1}}(u_{M^{\prime},T^{\prime}})-\bar{\gamma_{n_1}}(s_{M,T}) \vert }{\underset{u \in S_{M^{\prime},T^{\prime}}}{\inf} (w_{(M^{\prime},T^{\prime}),(M,T)}(u))}\right] + \mathbb{E}\left[ \underset{u \in S_{M^{\prime},T^{\prime}}}{\sup}\left(  \frac {\vert \bar{\gamma_{n_1}}(u)-\bar{\gamma_{n_1}}(u_{M^{\prime},T^{\prime}}) \vert}{w_{(M^{\prime},T^{\prime}),(M,T)}(u)}\right)\right]
$$
\noindent We prove:
$$
\mathbb{E}\left[ \frac{\vert
    \bar{\gamma_{n_1}}(u_{M^{\prime},T^{\prime}})-\bar{\gamma_{n_1}}(s_{M,T}) \vert } {\underset{u \in S_{M^{\prime},T^{\prime}}}{\inf} (w_{(M^{\prime},T^{\prime}),(M,T)}(u))}\right] \leq \frac{1}{\sqrt{n_1}(y_{M^{\prime},T^{\prime}}+y_{M,T})}
$$

\noindent For the second term, we have
$$
\mathbb{E}\left[ \underset{u \in S_{M^{\prime},T^{\prime}}}{\sup}\left(  \frac {\vert \bar{\gamma_{n_1}}(u)-\bar{\gamma_{n_1}}(u_{M^{\prime},T^{\prime}}) \vert}{w_{(M^{\prime},T^{\prime}),(M,T)}(u)}\right)\right] \leq 4\mathbb{E}\left[ \underset{u \in S_{M^{\prime},T^{\prime}}}{\sup}\left(  \frac {\vert \bar{\gamma_{n_1}}(u)-\bar{\gamma_{n_1}}(u_{M^{\prime},T^{\prime}}) \vert}{d^2(u,u_{M^{\prime},T^{\prime}})+(2(y_{M^{\prime},T^{\prime}}+y_{M,T}))^2}\right)\right]
$$

\noindent By application of lemma \ref{combi} for $2y_{M^{\prime},T^{\prime}} \geq \frac{K_3 \sqrt{\vert T^{\prime}\vert }}{\sqrt{n_1}h}$,
$$
\mathbb{E}\left[ \underset{u \in
    S_{M^{\prime},T^{\prime}}}{\sup}\left(\frac {\vert
    \bar{\gamma_{n_1}}(u)-\bar{\gamma_{n_1}}(u_{M^{\prime},T^{\prime}}) \vert}{w_{(M^{\prime},T^{\prime}),(M,T)}(u)}\right)\right] \leq \frac{8K_3 \sqrt{\vert T^{\prime} \vert } }{\sqrt{n_1}(y_{M^{\prime},T^{\prime}}+y_{M,T})}
$$

\noindent  Thus from (\ref{V1}), we know that on ${\Omega}_{\xi}$ and $\forall (M^{\prime},T^{\prime}), \ (M,T)$
\begin{eqnarray*}
 V_{(M^{\prime},T^{\prime}),(M,T)} & \leq &
 \frac{K_{1}}{\sqrt{n_1}(y_{M^{\prime},T^{\prime}}+y_{M,T})}\left(8K_3
 \sqrt{\vert T^{\prime} \vert}
+1\right)+
K_{2}\left(\sqrt{\frac{x_{M^{\prime},T^{\prime}}+x_{M,T}+\xi
 }{2n_1}}{(y_{M^{\prime},T^{\prime}}+y_{M,T})}^{-1}+ \right) \\
 & &
K_{2}\left( \frac{x_{M^{\prime},T^{\prime}}+x_{M,T}+\xi}{n_1}{(y_{M^{\prime},T^{\prime}}+y_{M,T})}^{-2}\right)
\end{eqnarray*}
\noindent  For $y_{M^{\prime},T^{\prime}}=3K \left( \frac{K_{1}}{\sqrt{n_1}}\left(8K_3 \sqrt{\vert T^{\prime} \vert}+1\right)+K_{2}\sqrt{\frac{x_{M^{\prime},T^{\prime}}+\xi/2}{2n_1}}+ \frac{1}{\sqrt{3K}}\sqrt{K_2 \frac{x_{M^{\prime},T^{\prime}}+\xi/2}{n_1}}\right)$ \\ with $K \geq \frac{1}{48K_1h}$, we get:
$$
V_{(M^{\prime},T^{\prime}),(M,T)}  \leq  \frac {1}{K}
$$

\noindent By overestimating $w_{(\widehat{M,T}),(M,T)}(\tilde{s})$,
$y_{\widehat{M,T}}^2$ and replacing  all of those results in
(\ref{L1}), we get

\begin{eqnarray*}
\left( 1-\frac{2}{K h}\right) l \left( s,\tilde{s}\right) & \leq &
\left( 1+\frac{2}{K h}\right) l\left(s,s_{M,T}\right)-pen(\widehat{M,T})+pen(M,T) \\
& & +36K \left(\frac{64K_1^2K_3^2}{n_1}\vert \hat{T} \vert + \frac{64K_1^2K_3^2}{n_1}\vert T \vert +2K_2\frac{x_{\widehat{M,T}}}{n_1}\left(\sqrt{\frac{K_2}{2}}+\frac{1}{\sqrt{3K}}\right)^2 \right) \\
& & +36K\left( 2K_2\frac{x_{M,T}}{n_1}\left(\sqrt{\frac{K_2}{2}}+\frac{1}{\sqrt{3K}}\right)^2\right)+36K\left(\frac{4K_1^2}{n_1}+2K_2\frac{\xi}{n_1} \left(\sqrt{\frac{K_2}{2}}+\frac{1}{\sqrt{3K}}\right)^2\right)\\
& &
\end{eqnarray*}

\noindent We let $ K=\frac{2}{h} \frac{C_1+1}{C_1-1}$ with $C_1 >1$.\\
\noindent Taking a penalty $pen(\widehat{M,T})$ which balances all the
terms in $\left(\widehat{M,T}\right)$, i.e.
$$
pen(M,T) \geq
\frac{72(C_1+1)}{h(C_1-1)}\left(\frac{64K_1^2K_3^2}{n_1}\vert T \vert
+
2K_2\frac{x_{M,T}}{n_1}\left(\sqrt{\frac{K_2}{2}}+\sqrt{\frac{C_1-1}{6(C_1+1)}}\right)^2\right)
$$

\noindent We obtain that on $\Omega_{\xi}$
$$
l(s,\tilde{s}) \leq 2C_1 \bigg( l(s,s_{M,T})+pen(M,T) \bigg) + \frac{C_2}{n_1h}+\frac{C_3}{n_1h}\xi
$$

\noindent In brief, with a penalty function such that
\noindent $\forall M \in \mathcal{P}(\Lambda), \ \ forall T \preceq T_{max}^{(M)}$
\begin{eqnarray*}
pen(M,T) & = & \alpha \frac{\vert T \vert}{n_1h}\left( 1 + (\vert M \vert +1)\left(1+log \left( \frac{n_1}{\vert M \vert+1} \right) \right) \right) + \beta \frac{\vert M \vert}{n_1h}\left( 1 + log \left( \frac{p}{\vert M \vert} \right) \right) \\
& \geq & \frac{72(C_1+1)}{C_1-1}\left(64K_1^2K_3^2+2K_2\left(\sqrt{\frac{K_2}{2}}+\sqrt{\frac{C_1-1}{6(C_1+1)}} \right)^2\right) \frac{\vert T \vert}{n_1h} \\
& & \times \left(a+(\vert M \vert +1)\left(1 + log \left( \frac{n_1}{\vert M \vert +1} \right)\right)\right)\\
& & + \frac{72(C_1+1)}{C_1-1}2K_2\left(\sqrt{\frac{K_2}{2}}+\sqrt{\frac{C_1-1}{6(C_1+1)}}  \right)^2 b \frac{\vert M \vert}{n_1h} \left( 1+ log \left( \frac{p}{\vert M \vert}\right) \right)
\end{eqnarray*}

\noindent we have
$$
l(s,\tilde{s}) \leq 2C_1 \Big\{ l(s,s_{M,T})+pen(M,T) \Big\} + \frac{C_2}{n_1h}\left(1 + \xi\right)
$$
\noindent We notice that the two constants $\alpha_0$ and $\beta_0$ which appear in the proposition \ref{propc2} are defined by
$$
\alpha_0=72 \left( 64K_1^2K_3^2+2K_2\left( \sqrt{\frac{K_2}{2}}+\frac{1}{\sqrt{6}} \right)^2 \right) \  {\text{ and }} \ \ \beta_0=72 \times 2 \times K_2\left( \sqrt{\frac{K_2}{2}}+\frac{1}{\sqrt{6}} \right)^2
$$

\hfill $\Box$\\

\subsubsection{Proof of the proposition \ref{propcf}: }

 This result is obtained by a direct application of the lemma
\ref{Boucheron} which appears in the subsection \ref{Appendix}.
\hfill $\Box$

\subsection{Regression}

\subsubsection{Proof of the proposition \ref{propM1}: }

\noindent Let $a>2 \log 2$, $b>1$, $\theta \in (0,1)$ and $K > 2-\theta$ four constants.

\noindent Let us denote
$$ s_{M,T} = \underset{u \in S_{M,T}}{argmin}\  \| s -u \|_{n_2}^2  \ \ {\text{and}} \ \
\varepsilon_{M,T} = \underset{u \in S_{M,T}}{argmin} \  \| \varepsilon-u \|_{n_2}^2 $$

\noindent Following the proof of theorem 1 in (\cite{Birge}), we get
\begin{eqnarray} \label{base}
(1-\theta)\| s -\tilde{s} \|_{n_2}^2 = \Delta_{\widehat{M,T}} + \underset{(M,T)}{\inf} R_{M,T}
\end{eqnarray}
\noindent where
\begin{eqnarray*}
\Delta_{M,T} &=& (2-\theta) \| \varepsilon_{M,T} \|_{n_2}^2
- 2 < \varepsilon, s-s_{M,T} >_{n_2}
- \theta \| s -s_{M,T} \|_{n_2}^2  - pen(M,T) \\
R_{M,T} &=& \| s -s_{M,T} \|_{n_2}^2 - \| \varepsilon_{M,T} \|_{n_2}^2
+ 2 < \varepsilon, s-s_{M,T} >_{n_2} + pen(M,T)
\end{eqnarray*}

\noindent We are going first to control $\Delta_{\widehat{M,T}}$
by using concentration inequalities of $ \| \varepsilon_{M,T} \|_{n_2}^2$
and ${- < \varepsilon, s-s_{M,T} >_{n_2}}$. \\

\noindent For any $M$, we denote $$\Omega_{M} = \left\{ \forall t \in
\widetilde{T_{max}^{(M)}} \ \left| \sum_{X_i \in t} \varepsilon_i \right |
\leq \frac{\sigma^2}{\rho} |X_i \in t| \right\} $$

\noindent Thanks to lemma \ref{faux-chi2}, we get that for any $(M,T)$ and any $x > 0$
\begin{eqnarray} \label{I1}
\nonumber \mathbb{P} \Bigg(  \| \varepsilon_{M,T} \|_{n_2}^2 \1_{\Omega_{M}}
& \geq & \frac{\sigma^2}{n_2} |T| + 8 \frac{\sigma^2}{n_2}\sqrt{2|T|x}
+ 4 \frac{\sigma^2}{n_2} x
\ \Big| \mathcal{L}_1 \text{ and } \{ X_i; \ (X_i,Y_i) \in \mathcal{L}_2 \} \Bigg) \\
\leq  e^{-x} & &
\end{eqnarray}
\noindent and
\begin{eqnarray*}
\mathbb{P} \left( \Omega_{M}^c \
\Big| \mathcal{L}_1 \text{ and } \{ X_i; \ (X_i,Y_i) \in \mathcal{L}_2 \} \right)
\leq 2 \frac{n_2}{N_{min}} \exp \left( \frac{- \sigma^2 N_{min} }{4 \rho^2} \right)
\end{eqnarray*}

\noindent Denoting $\Omega = \underset{M}{\bigcap} \Omega_{M}$,
we have
\begin{eqnarray*}
\mathbb{P} \left( \Omega^c \
\Big| \mathcal{L}_1 \text{ and } \{ X_i; \ (X_i,Y_i) \in \mathcal{L}_2 \} \right)
\leq 2^{p+1} \frac{n_2}{N_{min}} \exp \left( \frac{-\sigma^2 N_{min} }{4 \rho^2} \right)
\end{eqnarray*}

\noindent To control ${- < \varepsilon, s-s_{M,T} >_{n_2}}$, we calculate its Laplace transform.
Thanks to assumption (\ref{A}) and $ \|s\|_{\infty} \leq R$,
we have for any $(M,T)$ and any $\lambda \in \left( 0; \ \frac{n_2}{2 \rho R} \right)$,
$$
log \mathbb{E}\left[ e^{-\lambda < \varepsilon,s-s_{M,T} >_{n_2}} \ \Big| \mathcal{L}_1 \text{ and } \{ X_i; \ (X_i,Y_i) \in \mathcal{L}_2 \} \right] \leq  \frac{\lambda^2\sigma^2 \| s-s_{M,T} \|_{n_2}^2}{2n_2\left(1-\lambda\frac{2\rho R}{n_2}  \right)}
$$

\noindent Thus, for any $(M,T)$ and any $x>0$,
\begin{eqnarray} \label{I2}
\nonumber \mathbb{P}\Big( -<\varepsilon, s-s_{M,T} >_{n_2} & \geq &
\frac{\sigma}{\sqrt{n_2}} \| s-s_{M,T} \|_{n_2} \sqrt{2x}
+ \frac{2\rho R }{n_2} x
\ \Big| \mathcal{L}_1 \text{ and } \{ X_i; \ (X_i,Y_i) \in \mathcal{L}_2 \}
\Big) \\
\leq e^{-x} &&
\end{eqnarray}

\noindent Setting $x = x_{M,T} + \xi$ with $\xi >0$ and the weights ${ x_{M,T} = a |T|+ b |M| \left(1 + \log\left(\frac{p}{|M|}\right)\right) }$ as defined in lemma \ref{lemme1-poids}, and summing all inequalities (\ref{I1}) and (\ref{I2}) with respect to $(M,T)$, we derive a set $ E_{\xi} $ such that
\begin{itemize}
\item $ \mathbb{P} \left( E_{\xi}^c \ | \mathcal{L}_1 \text{ and } \{ X_i; \ (X_i,Y_i) \in \mathcal{L}_2 \} \right) \leq  2 e^{-\xi} \Sigma(a,b) $ \\
\item on the set $ E_{\xi} \bigcap \Omega $, for any $(M,T)$,
\begin{eqnarray*}
\Delta_{M,T} & \leq & (2-\theta)\frac{\sigma^2}{n_2}|T| + 8(2-\theta)\frac{\sigma^2}{n_2}\sqrt{2|T|(x_{M,T}+ \xi)}
+ 4(2-\theta) \frac{\sigma^2}{n_2} (x_{M,T}+ \xi) \\
& & + 2 \frac{\sigma}{\sqrt{n_2}} \| s-s_{M,T} \|_{n_2} \sqrt{2(x_{M,T}+ \xi)}  + 4 \frac{\rho R}{n_2} (x_{M,T}+ \xi) \\
& & - \theta \| s-s_{M,T} \|_{n_2}^2 - pen(M,T)
\end{eqnarray*}
\end{itemize}
\noindent where $\Sigma(a,b) = -\log\left( 1-e^{-(a-2 \log 2)} \right) \frac{e^{-(b-1)}}{1-e^{-(b-1)}} $.\\

\noindent Using the two following inequalities
\begin{enumerate}
\item[]
$2 \frac{\sigma}{\sqrt{n_2}} \| s-s_{M,T} \|_{n_2} \sqrt{2(x_{M,T}+ \xi)}
\leq \theta \| s-s_{M,T} \|_{n_2}^2 + \frac{2}{\theta} \frac{\sigma^2}{n_2} (x_{M,T}+ \xi)$,
\item[]
$2 \sqrt{|T|(x_{M,T} + \xi)} \leq \eta |T| + \eta^{-1} (x_{M,T}+ \xi)$
\end{enumerate}
with $\eta = \frac{K+\theta-2}{2-\theta} \frac{1}{4\sqrt{2}} >0$, we derive that on the set $ E_{\xi} \bigcap \Omega $, for any $(M,T)$,

\begin{eqnarray*}
\Delta_{M,T} & \leq & (2-\theta)\frac{\sigma^2}{n_2}\vert T \vert + 8\sqrt{2}(2-\theta)\frac{\sigma^2}{n_2}\sqrt{\vert T \vert(x_{M,T}+\xi)} +\left( 4(2-\theta)+\frac{2}{\theta}+4\frac{\rho}{\sigma^2}R \right)\frac{\sigma^2}{n_2}(x_{M,T}+\xi) -pen(M,T)  \\
& \leq & K \frac{\sigma^2}{n_2} |T| + \left( 4(2-\theta) \left(
1 + \frac{8(2-\theta)}{K+\theta-2} \right)
+ \frac{2}{\theta} + 4 \frac{\rho}{\sigma^2} R  \right)
\frac{\sigma^2}{n_2} (x_{M,T}+ \xi)
- pen(M,T)
\end{eqnarray*}

\noindent Taking a penalty $pen(M,T)$ which compensates for all the other terms in $(M,T)$, i.e.
\begin{eqnarray} \label{pen}
pen(M,T) \geq  K \frac{\sigma^2}{n_2}|T| +
\left[ 4(2-\theta) \left( 1 + \frac{8(2-\theta)}{K+\theta-2} \right)
+ \frac{2}{\theta} + 4 \frac{\rho}{\sigma^2} R  \right]
\frac{\sigma^2}{n_2}x_{M,T} \nonumber \\
&
\end{eqnarray}

\noindent we get that, on the set $ E_{\xi}$
$$\Delta_{\widehat{M,T}} \1_{\Omega}
\leq  \left( 4(2-\theta) \left(
1 + \frac{8(2-\theta)}{K+\theta-2} \right)
+ \frac{2}{\theta} + 4 \frac{\rho}{\sigma^2} R  \right)
 \frac{\sigma^2}{n_2} \xi$$

\noindent Integrating with respect to $\xi$, we derive
\begin{eqnarray} \label{delta}
& \mathbb{E}\left[ \Delta_{\widehat{M,T}} \1_{\Omega}
\Big| \mathcal{L}_1 \right] \leq
2 \left( 4(2-\theta) \left(
1 + \frac{8(2-\theta)}{K+\theta-2} \right)
+ \frac{2}{\theta} + 4 \frac{\rho}{\sigma^2} R  \right)
\frac{\sigma^2}{n_2} \Sigma(a,b) \\
& \nonumber
\end{eqnarray}

\noindent We are going now to control $\mathbb{E}\left[ \underset{(M,T)}{\inf} R_{M,T}
\1_{\Omega} \Big| \mathcal{L}_1 \right]$.

\noindent In the same way we deduced (\ref{I2}) from assumption (\ref{A}), we get that for any $(M,T)$ and any $x>0$
\begin{eqnarray*}
\mathbb{P}\Big( <\varepsilon, s-s_{M,T} >_{n_2} & \geq &
\frac{\sigma}{\sqrt{n_2}} \| s-s_{M,T} \|_{n_2} \sqrt{2x}
+ \frac{2\rho R}{n_2} x
\ \Big| \mathcal{L}_1 \text{ and } \{ X_i; \ (X_i,Y_i) \in \mathcal{L}_2 \}
\Big) \\
\leq e^{-x} &&
\end{eqnarray*}

\noindent Thus we derive a set $F_{\xi}$ such that
\begin{itemize}
\item $ \mathbb{P} \left( F_{\xi}^c \ | \mathcal{L}_1 \text{ and } \{ X_i; \ (X_i,Y_i) \in \mathcal{L}_2 \} \right) \leq  e^{-\xi} \Sigma(a,b) $ \\
\item on the set $ F_{\xi} $, for any $(M,T)$,
\begin{eqnarray*}
<\varepsilon, s-s_{M,T} >_{n_2} & \leq &
\frac{\sigma}{\sqrt{n_2}} \| s-s_{M,T} \|_{n_2} \sqrt{2\left(x_{M,T}+\xi \right)}
+ \frac{2\rho R}{n_2}\left(x_{M,T}+\xi \right)
\end{eqnarray*}
\end{itemize}

\noindent It follows from definition of $R_{M,T}$ that on the set $F_{\xi}$, for any $(M,T)$,
\begin{eqnarray*}
R_{M,T}= & \leq & \| s - s_{M,T} \|_{n_2}^2 + 2\frac{\sigma}{\sqrt{n_2}} \|s - s_{M,T}\|_{n_2}\sqrt{2(x_{M,T}+\xi)}+\frac{4\rho R}{n_2}(x_{M,T}+\xi)+pen(M,T) \\
& \leq &  2 \| s -s_{M,T}\|_{n_2}^2+\left( 2+4\frac{\rho}{\sigma^2}R\right)\frac{\sigma^2}{n_2}(x_{M,T}+\xi)+pen(M,T) \\
& \leq & 2 \| s -s_{M,T}\|_{n_2}^2+ 2pen(M,T) + \left( 2+4\frac{\rho}{\sigma^2}R\right)\frac{\sigma^2}{n_2} \xi\\
& &
\end{eqnarray*}
\noindent And
\begin{eqnarray} \label{reste}
\mathbb{E}\left[ \underset{(M,T)}{\inf} R_{M,T}
\1_{\Omega} \Big| \mathcal{L}_1 \right]
& \leq & 2 \underset{(M,T)}{\inf}
\left\{ \mathbb{E}\left[ \| s-s_{M,T} \|_{n_2}^2 \Big| \mathcal{L}_1 \right]
+ pen(M,T) \right\} \\
\nonumber & & + \left( \frac{2}{\theta} + 4 \frac{\rho}{\sigma^2} R  \right)\frac{\sigma^2}{n_2} \Sigma(a,b)
\end{eqnarray}

\noindent We conclude from (\ref{base}), (\ref{delta}) and (\ref{reste}) that
\begin{eqnarray*}
(1-\theta) \mathbb{E}\left[ \| s -\tilde{s} \|_{n_2}^2 \1_{\Omega}
\Big| \mathcal{L}_1 \right] \leq &
2 \underset{(M,T)}{\inf} \left\{ \mathbb{E}\left[ \| s -s_{M,T} \|_{n_2}^2
\Big| \mathcal{L}_1 \right] + pen(M,T) \right\} \\
& +  \left( 8(2-\theta) \left(
1 + \frac{8(2-\theta)}{K+\theta-2} \right)
+ \frac{6}{\theta} + 12 \frac{\rho}{\sigma^2} R  \right)
\frac{\sigma^2}{n_2} \Sigma(a,b) &
\end{eqnarray*}

\noindent It remains to control $\mathbb{E}\left[ \| s -\tilde{s} \|_{n_2}^2
\1_{\Omega^c}
\Big| \mathcal{L}_1 \right]$.
\begin{eqnarray*}
\mathbb{E}\left[ \| s -\tilde{s} \|_{n_2}^2 \1_{\Omega^c}
\Big| \mathcal{L}_1 \right] & = & \mathbb{E}\left[ \| s -s_{\widehat{M,T}} \|_{n_2}^2 \1_{\Omega^c}
\Big| \mathcal{L}_1 \right] + \mathbb{E}\left[ \| \varepsilon_{\widehat{M,T}} \|_{n_2}^2 \1_{\Omega^c}
\Big| \mathcal{L}_1 \right] \\
& \leq & \mathbb{E}\left[ \| s \|_{n_2}^2 \1_{\Omega^c}
\Big| \mathcal{L}_1 \right] + \sum_{M} \mathbb{E}\left[ \| \varepsilon_{M,T_{max}^{(M)}} \|_{n_2}^2 \1_{\Omega^c}
\Big| \mathcal{L}_1 \right]\\
& \leq &  R^2\mathbb{P}\left( \Omega^c \Big| \mathcal{L}_1 \right)
+ \sum_M \sqrt{ \mathbb{E}\left[ \| \varepsilon_{M,T_{max}^{(M)}}  \|_{n_2}^4
\Big| \mathcal{L}_1 \right] }
\sqrt{ \mathbb{P}\left( \Omega^c \Big| \mathcal{L}_1 \right)}
\end{eqnarray*}

\noindent As
\begin{eqnarray*}
 \mathbb{E}\left[ \| \varepsilon_{M,T_{max}^{(M)}}  \|_{n_2}^4
\Big| \mathcal{L}_1 \right] & \leq & \frac{\sigma^4\vert T_{max}^[M) \vert^2}{n_2^2}+\frac{c^2(\rho,\sigma) \vert T_{max}^{(M)}\vert}{n_2^2N_{min}}+\frac{3\sigma^4 \vert T_{max}^{(M)} \vert}{n_2^2} \\
& \leq &
\frac{C^2(\rho,\sigma) }{N_{min}^2}
\end{eqnarray*}
\noindent where $C(\rho, \sigma)$ is a constant which depends only on $\rho$ and $\sigma$.

\noindent Thus we have
\begin{eqnarray*}
\mathbb{E}\left[ \| s -\tilde{s} \|_{n_2}^2 \1_{\Omega^c}
\Big| \mathcal{L}_1 \right] & \leq &
R^2\mathbb{P}\left( \Omega^c \Big| \mathcal{L}_1 \right)
+ 2^p \frac{C(\rho,\sigma)}{N_{min}}
\sqrt{ \mathbb{P}\left( \Omega^c \Big| \mathcal{L}_1 \right)}
\end{eqnarray*}

\noindent Let us recall that
\begin{eqnarray*}
\mathbb{P} \left( \Omega^c \
\Big| \mathcal{L}_1 \right)
& \leq & 2^{p+1} \frac{n_2}{N_{min}} \exp \left( \frac{-\sigma^2 N_{min} }{4 \rho^2} \right)
\end{eqnarray*}

\noindent For $p \leq \log n_2$ and $N_{min} \geq \frac{24 \rho^2}{ \sigma^2} \log n_2$,
\begin{itemize}
\item $ 2^p \sqrt{ \mathbb{P}\left( \Omega_{\delta}^c \Big| \mathcal{L}_1
\right)}
\leq \frac{\sigma}{\sqrt{12}\rho} \frac{1}{n_2 \sqrt{\log n_2}} $
\item $ \mathbb{P} \left( \Omega_{\delta}^c \ \Big| \mathcal{L}_1 \right)
\leq \frac{\sigma^2}{12\rho^2} \frac{1}{n_2^4 \log n_2}$
\end{itemize}

\noindent It follows that
\begin{eqnarray*}
\mathbb{E}\left[ \| s -\tilde{s} \|_{n_2}^2 \1_{\Omega^c} \Big| \mathcal{L}_1 \right] & \leq &
C^{\prime}(\rho, \sigma, R) \frac{1}{n_2 (\log n_2)^{3/2}}
\end{eqnarray*}

\noindent Finally, we have the following result:

\noindent Denoting by $\Upsilon = \left[ 4(2-\theta) \left( 1 + \frac{8(2-\theta)}{K+\theta-2} \right)
+ \frac{2}{\theta} \right]$
  and
taking a penalty which satisfies
$\forall \ M \in \mathcal{P}(\Lambda)$ $ \forall \ T \preceq  T_{max}^{(M)}$

\begin{eqnarray*}
pen(M,T) & \geq & \left( \left( K + a \Upsilon \right)\sigma^2  + 4 a \rho R \right)
\frac{|T|}{n_2}
+ \left( b \Upsilon \sigma^2 + 4b \rho R \right)
\frac{|M|}{n_2} \left( 1 + \log\left( \frac{p}{|M|} \right) \right)
\end{eqnarray*}

\noindent if $p \leq \log n_2$ and $N_{min} \geq \frac{24 \rho^2}{\sigma^2} \log n_2$, we have,
\begin{eqnarray*}
(1-\theta) \mathbb{E} \left[ \| s -\tilde{s} \|_{n_2}^2 \
|\mathcal{L}_1 \right] & \leq &  2 \underset{(M,T)}{\inf} \left\{  \underset{u \in S_{M,T}}{\inf} \left\| s - u \right\|_{\mu}^2  +  pen(M,T) \right\} \\
& & + \left( 2 \Upsilon +2 + 12\frac{\rho}{\sigma^2}R  \right) \frac{\sigma^2}{n_2} \Sigma(a,b) \\
& & + (1-\theta) C^{\prime}(\rho, \sigma, R) \frac{1}{n_2(\log n_2)^{3/2}}
\end{eqnarray*}

\noindent We deduce the proposition by taking $K=2$,
$\theta \rightarrow 1$, $a \rightarrow 2 \log 2$ and $b \rightarrow 1$. \hfill $\Box$\\

\subsubsection{Proof of the proposition \ref{propM2}: }

\noindent Let $a>0$, $b>1$, $\theta \in (0,1)$ and $K>2-\theta$ four constants.

\noindent To follow the preceding proof, we have to consider the ``deterministic'' bigger collection of models:
\begin{eqnarray*}
\{ S_{M,T} ; \ T \in \mathcal{M}_{n_1,M} \ and \ M \in \mathcal{P}(\Lambda) \}
\end{eqnarray*}
where $\mathcal{M}_{n_1,M}$ denote the set of trees built on the grid ${ \{ X_i; \ (X_i,Y_i) \in \mathcal{L}_1 \} }$ with splits on the variables in $M$.
\noindent By considering this bigger collection of models,
we no longer have partitions built from an initial one.
So, we use lemma \ref{faux-chi2-2} instead of lemma \ref{faux-chi2}.

\noindent Let us denote, for any $M \in \mathcal{P}(\Lambda)$ and any $T \in \mathcal{M}_{n_1,M}$,
$$ s_{M,T} = \underset{u \in S_{M,T}}{argmin}\  \| s -u \|_{n_1}^2  \ \ {\text{and}} \ \
\varepsilon_{M,T} = \underset{u \in S_{M,T}}{argmin} \  \| \varepsilon-u \|_{n_1}^2 $$

\noindent Following the proof of theorem 1 in (\cite{Birge}), we get
\begin{eqnarray} \label{base1}
(1-\theta)\| s -\tilde{s} \|_{n_1}^2 = \Delta_{\widehat{M,T}} + \underset{(M,T)}{\inf} R_{M,T}
\end{eqnarray}
\noindent where
\begin{eqnarray*}
\Delta_{M,T} &=& (2-\theta) \| \varepsilon_{M,T} \|_{n_1}^2
- 2 < \varepsilon, s-s_{M,T} >_{n_1}
- \theta \| s -s_{M,T} \|_{n_1}^2  - pen(M,T) \\
R_{M,T} &=& \| s -s_{M,T} \|_{n_1}^2 - \| \varepsilon_{M,T} \|_{n_1}^2
+ 2 < \varepsilon, s-s_{M,T} >_{n_1} + pen(M,T)
\end{eqnarray*}

\noindent We are going first to control $\Delta_{\widehat{M,T}}$.
\noindent Let us denote
\begin{enumerate}
\item[$\bullet$] $\delta=5\frac{\rho}{\sigma^2}log\left(\frac{n_1}{p}\right)$
\item[$\bullet$] $\Omega=\{\forall \ 1 \leq i \leq n_1, \vert \varepsilon_i \vert \leq \delta \sigma^2\}$
\end{enumerate}

\noindent Thanks to lemma \ref{faux-chi2-2}, we get that for any $M \in \mathcal{P}(\Lambda)$, $T \in \mathcal{M}_{n_1,M}$ and any $x > 0$
\begin{eqnarray} \label{II1}
\nonumber \mathbb{P} \Bigg(  \| \varepsilon_{M,T} \|_{n_1}^2 \1_{\Omega}
& \geq & \frac{\sigma^2}{n_1} |T| + 4(1+\rho \delta) \frac{\sigma^2}{n_1}\sqrt{2|T|x}
+ 4 \frac{\sigma^2}{n_1} x
\ \Big| \{ X_i; \ (X_i,Y_i) \in \mathcal{L}_1 \} \Bigg) \\
\leq  e^{-x} & &
\end{eqnarray}
\noindent and
\begin{eqnarray*}
\mathbb{P} \left( \Omega^c \
\Big| \{ X_i; \ (X_i,Y_i) \in \mathcal{L}_1 \} \right)
\leq 2 n_1 \exp \left( \frac{- \sigma^2 \delta^2 }{2(1+\rho \delta)} \right)
\end{eqnarray*}

Thanks to assumption (\ref{A}), like to the $(M1)$ case, we get that for any $M \in \mathcal{P}(\Lambda)$,$T \in \mathcal{M}_{n_1,M}$ and any $x>0$,
\begin{eqnarray} \label{II2}
\nonumber \mathbb{P}\Big( -<\varepsilon, s-s_{M,T} >_{n_1} & \geq &
\frac{\sigma}{\sqrt{n_1}} \| s-s_{M,T} \|_{n_1} \sqrt{2x}
+ \frac{2\rho R }{n_1} x
\ \Big| \{ X_i; \ (X_i,Y_i) \in \mathcal{L}_1 \}
\Big) \\
\leq e^{-x} &&
\end{eqnarray}
\noindent Setting $x = x_{M,T} + \xi$ with $\xi >0$ and the weights ${ x_{M,T} = \left(a +(\vert M \vert +1)\left( 1 + log \left( \frac{n_1}{\vert M \vert +1} \right) \right) \right) \vert T \vert +  b \left( 1 + log \left( \frac{p}{\vert M \vert } \right) \right) |M|}$ as defined in lemma \ref{lemme2-poids}, and summing all inequalities (\ref{II1}) and (\ref{II2}) with respect to $M \in \mathcal{P}(\Lambda)$ and $T \in \mathcal{M}_{n_1,M}$, we derive a set $ E_{\xi} $ such that
\begin{itemize}
\item $ \mathbb{P} \left( E_{\xi}^c \ | \{ X_i; \ (X_i,Y_i) \in \mathcal{L}_1 \} \right) \leq  2 e^{-\xi} \Sigma(a,b) $ \\
\item on the set $ E_{\xi} \bigcap \Omega $, for any $(M,T)$,
\begin{eqnarray*}
\Delta_{M,T} & \leq & (2-\theta)\frac{\sigma^2}{n_1}|T| + 4(1+\rho \delta)(2-\theta)\frac{\sigma^2}{n_1}\sqrt{2|T|(x_{M,T}+ \xi)}
+ 2(1+\rho \delta)(2-\theta) \frac{\sigma^2}{n_1} (x_{M,T}+ \xi) \\
& & + 2 \frac{\sigma}{\sqrt{n_1}} \| s-s_{M,T} \|_{n_1} \sqrt{2(x_{M,T}+ \xi)}  + 4 \frac{\rho R}{n_1} (x_{M,T}+ \xi) \\
& & - \theta \| s-s_{M,T} \|_{n_1}^2 - pen(M,T)
\end{eqnarray*}
\end{itemize}
\noindent where $\Sigma(a,b) = \frac{e^{-a}}{1-e^{-a}} \frac{e^{-(b-1)}}{1-e^{-(b-1)}} $.\\

\noindent Using the two following inequalities
\begin{enumerate}
\item[]
$2 \frac{\sigma}{\sqrt{n_1}} \| s-s_{M,T} \|_{n_1} \sqrt{2(x_{M,T}+ \xi)}
\leq \theta \| s-s_{M,T} \|_{n_1}^2 + \frac{2}{\theta} \frac{\sigma^2}{n_1} (x_{M,T}+ \xi)$,
\item[]
$2 \sqrt{|T|(x_{M,T} + \xi)} \leq \eta |T| + \eta^{-1} (x_{M,T}+ \xi)$
\end{enumerate}
with $\eta = \frac{K+\theta-2}{2-\theta} \frac{1}{4\sqrt{2}} >0$, we derive that on the set $ E_{\xi} \bigcap \Omega $, for any $(M,T)$,

\begin{eqnarray*}
\Delta_{M,T} & \leq & (2-\theta)\frac{\sigma^2}{n_1}\vert T \vert + 4\sqrt{2}(1+\rho \delta)(2-\theta)\frac{\sigma^2}{n_1}\sqrt{\vert T \vert(x_{M,T}+\xi)} +\left( 2(1+\rho \delta)(2-\theta)+\frac{2}{\theta}+4\frac{\rho}{\sigma^2}R \right)\frac{\sigma^2}{n_1}(x_{M,T}+\xi) -pen(M,T)  \\
& \leq & K \frac{\sigma^2}{n_1} |T| + \left( 2(1+\rho \delta)(2-\theta) \left(
1 + \frac{4(1+\rho \delta)(2-\theta)}{K+\theta-2} \right)
+ \frac{2}{\theta} + 4 \frac{\rho}{\sigma^2} R  \right)
\frac{\sigma^2}{n_1} (x_{M,T}+ \xi)
- pen(M,T)
\end{eqnarray*}

\noindent Taking a penalty $pen(M,T)$ which compensates for all the other terms in $(M,T)$, i.e.
\begin{eqnarray} \label{pen1}
pen(M,T) \geq  K \frac{\sigma^2}{n_1}|T| +
\left[ 2(1+\rho \delta)(2-\theta) \left( 1 + \frac{4(1+\rho \delta)(2-\theta)}{K+\theta-2} \right)
+ \frac{2}{\theta} + 4 \frac{\rho}{\sigma^2} R  \right]
\frac{\sigma^2}{n_1}x_{M,T} \nonumber \\
&
\end{eqnarray}

\noindent we get that, on the set $ E_{\xi}$
$$\Delta_{\widehat{M,T}} \1_{\Omega}
\leq  \left( 2(1+\rho \delta)(2-\theta) \left(
1 + \frac{4(1+\rho \delta)(2-\theta)}{K+\theta-2} \right)
+ \frac{2}{\theta} + 4 \frac{\rho}{\sigma^2} R  \right)
 \frac{\sigma^2}{n_1} \xi$$

\noindent We are going now to control $ \underset{(M,T)}{\inf} R_{M,T}$.

\noindent In the same way we deduced (\ref{II2}) from assumption (\ref{A}), we get that for any $M \in \mathcal{P}(\Lambda)$ and $T \in \mathcal{M}_{n_1,M}$ and any $x>0$
\begin{eqnarray*}
\mathbb{P}\Big( <\varepsilon, s-s_{M,T} >_{n_1} & \geq &
\frac{\sigma}{\sqrt{n_1}} \| s-s_{M,T} \|_{n_1} \sqrt{2x}
+ \frac{2\rho R}{n_1} x
\ \Big|  \{ X_i; \ (X_i,Y_i) \in \mathcal{L}_1 \}
\Big) \\
\leq e^{-x} &&
\end{eqnarray*}

\noindent Thus we derive a set $F_{\xi}$ such that
\begin{itemize}
\item $ \mathbb{P} \left( F_{\xi}^c \ |  \{ X_i; \ (X_i,Y_i) \in \mathcal{L}_1 \} \right) \leq  e^{-\xi} \Sigma(a,b) $ \\
\item on the set $ F_{\xi} $, for any $(M,T)$,
\begin{eqnarray*}
<\varepsilon, s-s_{M,T} >_{n_1} & \leq &
\frac{\sigma}{\sqrt{n_1}} \| s-s_{M,T} \|_{n_1} \sqrt{2\left(x_{M,T}+\xi \right)}
+ \frac{2\rho R}{n_1}\left(x_{M,T}+\xi \right)
\end{eqnarray*}
\end{itemize}

\noindent It follows from definition of $R_{M,T}$ that on the set $F_{\xi}$, for any $(M,T)$,
\begin{eqnarray*}
R_{M,T}= & \leq & \| s - s_{M,T} \|_{n_1}^2 + 2\frac{\sigma}{\sqrt{n_1}} \|s - s_{M,T}\|_{n_1}\sqrt{2(x_{M,T}+\xi)}+\frac{4\rho R}{n_1}(x_{M,T}+\xi)+pen(M,T) \\
& \leq &  2 \| s -s_{M,T}\|_{n_1}^2+\left( 2+4\frac{\rho}{\sigma^2}R\right)\frac{\sigma^2}{n_1}(x_{M,T}+\xi)+pen(M,T) \\
& \leq & 2 \| s -s_{M,T}\|_{n_1}^2+ 2pen(M,T) + \left( 2+4\frac{\rho}{\sigma^2}R\right)\frac{\sigma^2}{n_1} \xi\\
& &
\end{eqnarray*}

\noindent We conclude that on $E_{\xi} \cap F_{\xi} \cap \Omega$
$$
(1-\theta) \|s-\tilde{s} \|_{n_1}^2 \leq 2\underset{(M,T)}{\inf} \{\|s-s_{M,T}\|_{n_1}^2+pen(M,T)\}+\Upsilon \frac{\sigma^2}{n_1}\xi
$$

\noindent And, for $p\leq log(n_1)$,
\begin{eqnarray*}
\mathbb{P}\left(E_{\xi}^c \cup F_{\xi}^c \cup \Omega^c \right) & \leq & 3e^{-\xi}\Sigma(a,b) \\
& & +\frac{1}{n_1}\underbrace{2exp\left(\frac{-\frac{5\rho^2}{\sigma^2}(log n_1)^2+\frac{50 \rho^2}{\sigma^2}(log n_1)(log log n_1)+4log n_1}{2\left(1+\frac{5\rho^2}{\sigma^2}log n_1 \right)} \right)}_{\epsilon(n_1)}
\end{eqnarray*}

\noindent Finally, we have the following result:\\
\noindent Denoting by
\begin{eqnarray*}
\Upsilon & = & 2(1+\rho \delta)(2-\theta)\left(1+\frac{2(1+\rho \delta)(2-\theta)}{K+\theta-2}\right) +\frac{2}{\theta} \\
 & = & \Big[2(1+5\frac{\rho^2}{\sigma^2}log\left(\frac{n_1}{p}\right))(2-\theta)\left(1+\frac{4(1+5\frac{\rho^2}{\sigma^2}log\left(\frac{n_1}{p}\right))(2-\theta)}{K+\theta-2}\right)+\frac{2}{\theta}\Big]
\end{eqnarray*}

\noindent and
$$
\epsilon(n_1)=2exp\left(\frac{-\frac{5\rho^2}{\sigma\epsilon^2}(log n_1)^2+\frac{50\rho^2}{\sigma^2}(log n_1)(log log n_1)+4log n_1}{2\left( 1 + \frac{5\rho^2}{\sigma^2}log n_1\right)} \right) \underset{n_1 \rightarrow +\infty}{\rightarrow} 0
$$

\noindent Taking a penalty which satisfies: $\forall (M,T)$
$\forall \ M \in \mathcal{P}(\Lambda)$ $ \forall \ T \preceq  T_{max}^{(M)}$

\begin{eqnarray*}
pen(M,T) & \geq & K\frac{\sigma^2}{n_1}\vert T \vert \\
& & +\left( \Upsilon +4\frac{\rho}{\sigma^2}R\right)\frac{\sigma^2}{n_1}\left(a+(\vert M \vert +1)\left(1+log \left( \frac{n_1}{\vert M \vert +1} \right) \right) \right) \vert T \vert \\
& & + \left(\Upsilon+4\frac{\rho}{\sigma^2}R\right) \frac{\sigma^2}{n_1}b\left(1+log \left(\frac{p}{\vert M \vert } \right) \right) \vert M \vert
\end{eqnarray*}

\noindent we have $\forall \xi >0$, with probability $\geq 1-3e^{-\xi}\Sigma(a,b)-\frac{1}{n_1}\epsilon(n_1)$
$$
(1-\theta) \| s -\tilde{s} \|_{n_1}^2  \leq  2 \underset{(M,T)}{\inf} \left\{\| s - s_{M,T} |_{n_1}^2  +  pen(M,T) \right\} +\left(\Upsilon+2+8\frac{\rho}{\sigma^2}R\right)\frac{\sigma^2}{n_1}\xi
$$

\noindent We deduce the proposition by noticing that
\begin{eqnarray*}
\Upsilon & \leq & 8(2-\theta)max\left\{1;\frac{4(2-\theta)}{K+\theta-2}\right\}\left(1+25\frac{\rho^4}{\sigma^4}log^2\left(\frac{n_1}{p}\right)\right)+\frac{2}{\theta} \\
& \leq & C(K,\theta)\left(1+\frac{\rho^4}{\sigma^4}log^2\left(\frac{n_1}{p}\right) \right)
\end{eqnarray*}

\noindent and by taking $K=2$, $\theta \rightarrow 1$, $a\rightarrow 0$ and $b\rightarrow 1$.
\hfill $\Box$\\

\subsubsection{Proof of the proposition \ref{propF}: }

\noindent It follows from the definition of $ \tilde{\tilde{s}} $ that
for any $\tilde{s}(\alpha,\beta) \in \mathcal{G}$
\begin{eqnarray} \label{bf}
\left\| s - \tilde{\tilde{s}} \right\|_{n_3}^2 \leq
\left\| s - \tilde{s}(\alpha,\beta) \right\|_{n_3}^2
+ 2  \left< \varepsilon, \tilde{\tilde{s}} - \tilde{s}(\alpha,\beta) \right>
_{n_3}
\end{eqnarray}

\noindent Denoting $ M_{\alpha, \beta, \alpha^{\prime}, \beta^{\prime}} =
max \left\{ \left|
\tilde{s}(\alpha^{\prime},\beta^{\prime})(X_i)
- \tilde{s}(\alpha,\beta)(X_i)
\right|; \ (X_i,Y_i) \in \mathcal{L}_3 \right\}   $, we deduce from assumption (\ref{A})
that for any $\tilde{s}(\alpha,\beta)$ and $\tilde{s}(\alpha^{\prime},\beta^{\prime}) \in \mathcal{G}$
\begin{eqnarray*}
& log  \mathbb{E}\left[exp(\lambda < \varepsilon, \tilde{s}(\alpha,\beta) >_{n_3}) | \mathcal{L}_1, \mathcal{L}_2 \text{ and } \{X_i; \ (X_i,Y_i) \in \mathcal{L}_3\} \right] \\
& \leq \frac{\sigma^2 \|\tilde{s}(\alpha^{\prime},\beta^{\prime})-\tilde{s}(\alpha,\beta)\|_{n_3}^2 \lambda^2}{2n_3\left(1-\frac{\rho}{n_3}M_{\alpha,\beta,\alpha^{\prime},\beta^{\prime}} |\lambda|\right)} \text{ if } |\lambda| < \frac{n_3}{\rho M_{\alpha,\beta,\alpha^{\prime},\beta^{\prime}}}
\end{eqnarray*}

\noindent Thus we get that for any $\tilde{s}(\alpha,\beta)$, $\tilde{s}(\alpha^{\prime},\beta^{\prime}) \in \mathcal{G}$ and $x>0$
\begin{eqnarray*}
\mathbb{P} & \left( \left< \varepsilon, \tilde{s}(\alpha^{\prime},\beta^{\prime})
- \tilde{s}(\alpha,\beta) \right>_{n_3} \geq
\frac{\sigma}{\sqrt{n_3}} \| \tilde{s}(\alpha^{\prime},\beta^{\prime})
- \tilde{s}(\alpha,\beta) \|_{n_3} \sqrt{2x} +
M_{\alpha, \beta, \alpha^{\prime}, \beta^{\prime}} \frac{\rho}{n_3} x \right. \\
& \left. \Big| \ \mathcal{L}_1, \ \mathcal{L}_2, \left\{ X_i, \ (X_i,Y_i) \in \mathcal{L}_3 \right\} \right) \leq e^{-x}
\end{eqnarray*}

\noindent Setting $x = 2 \log \mathcal{K} + \xi$ with $\xi >0$, and summing all these inequalities
with respect to $\tilde{s}(\alpha,\beta)$ and
$\tilde{s}(\alpha^{\prime},\beta^{\prime}) \in \mathcal{G}$ , we derive a set $ E_{\xi} $ such that
\begin{itemize}
\item $ \mathbb{P} \left( E_{\xi}^c \ | \mathcal{L}_1, \ \mathcal{L}_2,
\text{ and } \{ X_i; \ (X_i,Y_i) \in \mathcal{L}_3 \} \right) \leq  e^{-\xi} $ \\
\item on the set $ E_{\xi}$, for any $\tilde{s}(\alpha,\beta)$ and
$\tilde{s}(\alpha^{\prime},\beta^{\prime}) \in \mathcal{G}$
\begin{eqnarray*}
\left< \varepsilon, \tilde{s}(\alpha^{\prime},\beta^{\prime})
- \tilde{s}(\alpha,\beta) \right>_{n_3} & \leq &
\frac{\sigma}{\sqrt{n_3}} \| \tilde{s}(\alpha^{\prime},\beta^{\prime})
- \tilde{s}(\alpha,\beta) \|_{n_3} \sqrt{2(2 \log \mathcal{K} + \xi)} \\
 & & + M_{\alpha, \beta, \alpha^{\prime}, \beta^{\prime}} \frac{\rho}{n_3} (2 \log \mathcal{K} + \xi)
\end{eqnarray*}
\end{itemize}

\noindent It remains to control ${ M_{\alpha, \beta, \alpha^{\prime}, \beta^{\prime}} }$ in the two situations $(M1)$ and $(M2)$ (except if $\rho = 0$).

\noindent In the $(M1)$ situation, we consider the set
\begin{eqnarray*}
\Omega_1 = \underset{M \in \mathcal{P}(\Lambda)}{\bigcap}
\left\{ \forall t \in \widetilde{T_{max}^{(M)}} \left|
\sum_{\underset{X_i \in t }{(X_i,Y_i) \in \mathcal{L}_2}} \varepsilon_i
\right|
\leq R \left|
\left\{i; \ (X_i,Y_i) \in \mathcal{L}_2 \text{ and } X_i \in t  \right\}
\right| \right\}
\end{eqnarray*}

\noindent Thanks to assumption (\ref{A}), we get that for any $\lambda \in (-1/\rho,\ 1/\rho)$
\begin{eqnarray*}
& log \mathbb{E}\Big[ exp\left(\lambda \sum_{(X_i,Y_i)\in \mathcal{L}_2, X_i \in t} \varepsilon_i \right) \Big| \mathcal{L}_1 {\text{ and }} \{X_i; \ (X_i,Y_i) \in \mathcal{L}_2\} \Big] \\
& \leq \frac{\lambda^2 \sigma^2}{2(1-\rho|\lambda|)}|\{i; \ (X_i,Y_i) \in \mathcal{L}_2 {\text{ and }} X_i \in t \}|
\end{eqnarray*}

\noindent It follows that for any $x>0$
\begin{eqnarray*}
\mathbb{P}\left( \left| \sum_{ \underset{X_i \in t}{(X_i,Y_i) \in
\mathcal{L}_2} } \varepsilon_i \right| \geq x \Bigg| \mathcal{L}_1
\text{ and } \{ X_i; \ (X_i,Y_i) \in \mathcal{L}_2 \}
\right) & \leq &
2 e^{\frac{-x^2}{2\left( \sigma^2\left|
\{i; \ (X_i,Y_i) \in \mathcal{L}_2 \text{ and } X_i \in t \} \right|
+ \rho x \right)}}
\end{eqnarray*}

\noindent Taking $x = R \left|
\left\{i; \ (X_i,Y_i) \in \mathcal{L}_2 \text{ and } X_i \in t  \right\}
\right| $ and summing all these inequalities, we get that
\begin{eqnarray*}
\mathbb{P} \left( \Omega_1^c \Big| \  \mathcal{L}_1 \text{ and }
\{ X_i; \ (X_i,Y_i) \in \mathcal{L}_2 \} \right)
& \leq & 2^{p+1} \frac{n_1}{N_{min}} \exp \left( \frac{-R^2 N_{min}}
{2(\sigma^2 + \rho R)} \right)
\end{eqnarray*}

\noindent On the set $\Omega_1$, as for any $(M,T)$,
$\| \hat{s}_{M,T} \|_{\infty} \leq 2R$, we have
${ M_{\alpha, \beta, \alpha^{\prime}, \beta^{\prime}} \leq 4R }$.

\noindent Thus, on the set $\Omega_1 \bigcap E_{\xi}$,
for any $\tilde{s}(\alpha,\beta) \in \mathcal{G}$
\begin{eqnarray*}
\left< \varepsilon, \tilde{\tilde{s}}
- \tilde{s}(\alpha,\beta) \right>_{n_3} & \leq &
\frac{\sigma}{\sqrt{n_3}} \| \tilde{\tilde{s}}
- \tilde{s}(\alpha,\beta) \|_{n_3} \sqrt{2(2 \log \mathcal{K} + \xi)} +
4R \frac{\rho}{n_3} (2 \log \mathcal{K} + \xi)
\end{eqnarray*}

It follows from (\ref{bf}) that, on the set $\Omega_1 \bigcap E_{\xi}$,
for any $\tilde{s}(\alpha,\beta) \in \mathcal{G}$ and
any $\eta \in (0;1)$
\begin{eqnarray*}
\|s-\tilde{\tilde{s}}\|_{n_3}^2 \leq \|s-\tilde{s}(\alpha,\beta)\|{n_3}^2+(1-\eta)\|\tilde{\tilde{s}}-\tilde{s}(\alpha,\beta)\|{n_3}^2+ \frac{2}{1-\eta}\frac{\sigma^2}{n_3}(2log K +\xi) +\frac{8\rho R}{n_3}(2log K + \xi)
\end{eqnarray*}

\noindent and
\begin{eqnarray*}
\eta^2 \left\| s - \tilde{\tilde{s}} \right\|_{n_3}^2 \leq
(1+\eta^{-1}-\eta) \left\| s - \tilde{s}(\alpha,\beta) \right\|_{n_3}^2
+ \left( \frac{2}{1-\eta}\sigma^2 + 8 \rho R \right)
\frac{(2 \log \mathcal{K} + \xi)}{n_3}
\end{eqnarray*}

\noindent Taking $p\leq \log n_2$ and $N_{min} \geq
4 \frac{\sigma^2 + \rho R}{R^2} \log n_2$, we have
$$ \mathbb{P} \left( \Omega_1^c \right) \leq
\frac{R^2}{2(\sigma^2+ \rho R)} \frac{1}{n_2^{1-\log 2}} $$

\noindent Finally, in the $(M1)$ situation, we have\\
for any $\xi >0$, with probability
$\geq 1 - e^{-\xi} - \frac{R^2}{2(\sigma^2+ \rho R)} \frac{1}{n_2^{1-\log 2}}$,
$\forall \eta \in (0,1)$,
\begin{eqnarray*}
\left\| s - \tilde{\tilde{s}} \right\|_{n_3}^2 \leq
\frac{(1+\eta^{-1}-\eta)}{\eta^2}
\underset{\tilde{s}(\alpha,\beta) \in \mathcal{G}}{\inf}
\left\| s - \tilde{s}(\alpha,\beta) \right\|_{n_3}^2
+ \frac{1}{\eta^2} \left( \frac{2}{1-\eta}\sigma^2 + 8 \rho R \right)
\frac{(2 \log \mathcal{K} + \xi)}{n_3}
\end{eqnarray*}

\noindent In the $(M2)$ situation, we consider the set
\begin{eqnarray*}
\Omega_2 =
\left\{ \forall 1\leq i \leq n_1 \left|  \varepsilon_i \right|
\leq 3 \rho \log n_1 \right\}
\end{eqnarray*}

\noindent Thanks to assumption (\ref{A}), we get that
\begin{eqnarray*}
\mathbb{P} \left( \Omega_2^c \Big| \
\{ X_i; \ (X_i,Y_i) \in \mathcal{L}_1 \} \right)
& \leq & 2 n_1 \exp \left( - \frac{9 \rho^2 \log^2 n_1}{2(
\sigma^2 + 3 \rho^2 \log n_1)} \right)
\end{eqnarray*}

\noindent with $\epsilon(n_1) =
2 n_1 \exp \left( - \frac{9 \rho^2 \log^2 n_1}{2(
\sigma^2 + 3 \rho^2 \log n_1)} \right)
\underset{n_1 \rightarrow + \infty}{\longrightarrow} 0$

\noindent On the set $\Omega_2$, as for any $(M,T)$,
$\| \hat{s}_{M,T} \|_{\infty} \leq R+ 3 \rho \log n_1$, we have
${ M_{\alpha, \beta, \alpha^{\prime}, \beta^{\prime}} \leq
2(R+ 3 \rho \log n_1) }$.\\
\noindent  Thus, on the set $\Omega_2 \bigcap E_{\xi}$,
for any $\tilde{s}(\alpha,\beta) \in \mathcal{G}$
\begin{eqnarray*}
\left< \varepsilon, \tilde{\tilde{s}}
- \tilde{s}(\alpha,\beta) \right>_{n_3} & \leq &
\frac{\sigma}{\sqrt{n_3}} \| \tilde{\tilde{s}}
- \tilde{s}(\alpha,\beta) \|_{n_3} \sqrt{2(2 \log \mathcal{K} + \xi)} +
2(R+ 3 \rho \log n_1) \frac{\rho}{n_3} (2 \log \mathcal{K} + \xi)
\end{eqnarray*}

\noindent It follows from (\ref{bf}) that, on the set $\Omega_2 \bigcap E_{\xi}$,
for any $\tilde{s}(\alpha,\beta) \in \mathcal{G}$ and
any $\eta \in (0;1)$
\begin{eqnarray*}
\|s-\tilde{\tilde{s}}\|_{n_3}^2 & \leq &  \|s-\tilde{s}(\alpha,\beta)\|_{n_3}^2 + (1-\eta)\|\tilde{\tilde{s}}-\tilde{s}(\alpha,\beta)\|_{n_3}^2+\frac{2}{1-\eta}\frac{\sigma^2}{n_3}(2log K+\xi) \\
& & + \frac{4\rho(R+3\rho log n_1)}{n_3}(2log K + \xi)
\end{eqnarray*}

\noindent and
\begin{eqnarray*}
\eta^2 \left\| s - \tilde{\tilde{s}} \right\|_{n_3}^2 \leq
(1+\eta^{-1}-\eta) \left\| s - \tilde{s}(\alpha,\beta) \right\|_{n_3}^2
+ \left( \frac{2}{1-\eta}\sigma^2 +  4 \rho (R+ 3 \rho \log n_1)  \right)
\frac{(2 \log \mathcal{K} + \xi)}{n_3}
\end{eqnarray*}

\noindent Finally, in the $(M2)$ situation, we have for any $\xi >0$, with probability
$\geq 1 - e^{-\xi} - \epsilon(n_1)$,
$\forall \eta \in (0,1)$,
\begin{eqnarray*}
\left\| s - \tilde{\tilde{s}} \right\|_{n_3}^2 & \leq &
\frac{(1+\eta^{-1}-\eta)}{\eta^2}
\underset{\tilde{s}(\alpha,\beta) \in \mathcal{G}}{\inf}
\left\| s - \tilde{s}(\alpha,\beta) \right\|_{n_3}^2 \\
& & + \frac{1}{\eta^2} \left( \frac{2}{1-\eta}\sigma^2 + 4 \rho R
+ 12 \rho^2 \log n_1 \right)
\frac{(2 \log \mathcal{K} + \xi)}{n_3}
\end{eqnarray*}
\hfill $\Box$

\bibliographystyle{plainnat}
\bibliography{bibmc-jmva2}

\end{document}